\numberwithin{equation}{section}
\theoremstyle{plain}
\theoremstyle{remark}
\newtheorem{theorem}{Theorem}[section]
\newtheorem{lemma}[theorem]{Lemma}
\newtheorem{proposition}[theorem]{Proposition}
\theoremstyle{definition} 								
\numberwithin{equation}{section}   						
\newcommand{\ind}[1]{\mathds{1}_{#1}}
\newcommand{\floor}[1]{\lfloor #1 \rfloor}
\def\N{{\mathbb{N}}}
\def\R{{\mathbb{R}}}
\def\Z{{\mathbb{Z}}}
\def\GG{{\mathcal{G}}}
\def\BB{{\mathcal{B}}}
\def\ord{{\hbox{o}}}
\def\Ord{{\hbox{O}}}
\begin{document}

\begin{frontmatter}
\title{Asymptotics for sliding blocks estimators of rare events}
\runtitle{Sliding blocks estimators}
\begin{aug}
  \author[A]{\fnms{Holger} \snm{Drees}\ead[label=e1]{drees@math.uni-hamburg.de}},
  \author[A]{\fnms{Sebastian} \snm{Neblung}\ead[label=e2,mark]{sebastian.neblung@uni-hamburg.de}}
  \address[A]{University of Hamburg, Department of Mathematics,
 SPST, Bundesstr.\ 55, 20146 Hamburg, Germany, \printead{e1,e2}}
\end{aug}

\begin{abstract}
  { } \cite{drees2010} have established limit theorems for a general class of empirical processes of statistics that are useful for the extreme value analysis of time series, but do not apply to statistics of sliding blocks, including so-called runs estimators. We generalize these results to empirical processes which cover both the class considered by \cite{drees2010} and processes of sliding blocks statistics. Using this approach, one can analyze different types of statistics in a unified framework. We show that statistics based on sliding blocks are asymptotically normal with an asymptotic variance which,  under rather mild conditions, is smaller than or equal to the asymptotic variance of the corresponding estimator based on disjoint blocks. Finally, the general theory is applied to three well-known estimators of the extremal index. It turns out that they all have the same limit distribution, a fact which has so far been overlooked in the literature.
\end{abstract}
\begin{keyword}[class=AMS]
\kwd[Primary ]{62G32}  \kwd[; secondary ]{62M10, 62G05, 60F17}
\end{keyword}

\begin{keyword}
\kwd{asymptotic efficiency, empirical processes, extremal index, extreme value analysis, sliding vs disjoint blocks,  time series, uniform central limit theorems}
\end{keyword}

\end{frontmatter}												


\section{Introduction}

The analysis of the serial dependence between large observations is crucial for a thorough understanding of the extreme value behavior of stationary time series. In the peaks over threshold (POT) approach, estimators of the dependence structure can usually be defined blockwise. To be more specific, assume that, starting from a stationary $\R^d$-valued time series $(X_t)_{1\le t\le n}$, \label{page.n} random variables (rv's) $X_{n,i}$ are defined, that in some sense capture its extreme value behavior. The most common example is $X_{n,i}:=(X_i/u_n)\ind{(u_n,\infty)}(\|X_i\|)$ for some threshold $u_n$ and some norm $\|\cdot\|$ on $\R^d$, but for certain applications $X_{n,i}$ may also depend on observations in the neighborhood of extreme observations. We consider statistics $g(Y_{n,j})$ of blocks
\begin{equation} \label{eq:Ydef}
  Y_{n,j} := (X_{n,j},\ldots,X_{n,j+s_n-1})
\end{equation}
of (possibly increasing) length $s_n$, starting with the $j$th rv. Estimators and test statistics of interest can then be defined in terms of averages of such blocks statistics.
For example, the well-known blocks estimator of the extremal index (roughly speaking, the reciprocal of the mean size of a cluster of extreme values) is of this type; see Section \ref{section:extremal.index} for details. Other examples are the empirical extremogram analyzed by \cite{davis2009}, forward and backward estimators of the distribution of the spectral tail process of a regularly varying time series examined by \cite{drees2015} and \cite{davis2018}, and the estimator of the cluster size distribution proposed by \cite{hsing1991}.

Here one may average either statistics $g(Y_{n,is_n+1})$, $0\le i\le \floor{n/s_n}-1$, of disjoint blocks or statistics $g(Y_{n,i})$, $1\le i\le n-s_n+1$, of overlapping sliding blocks. It has been suggested in the literature that the latter approach may often be more efficient; see, e.g., \cite{beirlant2004}, p.\ 390, for such a statement about blocks estimators of the extremal index. However, the asymptotic performance of both approaches has been compared only for a couple of estimators, while general results showing the superiority of the sliding blocks estimators are not yet known in the POT setting. \cite{robert2009} proved that for a different type of estimators of the extremal index the version using sliding blocks has a strictly smaller asymptotic variance than the one based on disjoint blocks, while the bias is asymptotically the same. In a block maxima setting, \cite{zou2019} proved that under quite general conditions an estimator of the extreme value copula of multivariate stationary time series is more efficient if it is based on sliding rather than disjoint blocks. The same observation has been made by \cite{bucher2018} for the maximum likelihood estimator of the parameters of a Fr\'{e}chet distribution  based on maxima of sliding or disjoint blocks, respectively, of a stationary time series with marginal distribution in the maximum domain of attraction of this Fr\'{e}chet distribution.

\cite{drees2010} provided a general framework to analyze the asymptotic behavior of statistics which are based on averages of functionals of disjoint blocks from an absolutely regular time series. Sufficient conditions for convergence of the empirical process of so-called cluster functionals established there proved to be a powerful tool for establishing asymptotic normality of a wide range of estimators; see, e.g., \cite{drees2015boot}, \cite{davis2018}, and \cite{knezevic2020}. Unfortunately, the setting considered by \cite{drees2010} is too restrictive to accommodate empirical processes based on sliding blocks.

The first aim of the present paper is thus to establish results on the convergence of empirical processes of the type
$$
 \bar Z_n(g) := \frac 1{\sqrt{p_n}b_n(g)} \sum_{j=1}^{n-s_n+1}\big( g(Y_{n,j})-Eg(Y_{n,j})\big), \quad g\in\GG.
$$
Here $Y_{n,j}$ is defined by \eqref{eq:Ydef} for some row-wise stationary triangular array $(X_{n,i})_{1\le i\le n, n\in\N}$,  $\GG$ is a set of functionals defined on vectors of arbitrary length that vanish if applied to a null vector and $\sqrt{p_n}b_n(g)$ is a normalizing sequence which will be introduced in Section~\ref{section:sliding}. We are mainly interested in the case when $X_{n,i}$ are suitably standardized extremes. In particular, we will assume $P\{\exists g\in\GG: g(Y_{n,1})\ne 0\}\to 0$. It is worth mentioning, though, that our general results can be applied to other statistics of rare events (cf.\ \cite{drees2010}, Ex.\ 3.5).

The second aim is to compare the performance of estimators derived from $\bar Z_n(g)$ with their analogs based on disjoint blocks. To this end, we will prove convergence of certain empirical processes in an abstract unifying framework which encompasses both the aforementioned setting to deal with sliding blocks processes $\bar Z_n$ and the setting discussed by \cite{drees2010}. This way one may derive the asymptotic normality of functionals of sliding resp.\ disjoint blocks under similar conditions, and the expressions obtained for their asymptotic variances become comparable. It will be shown that indeed, under weak conditions, the asymptotic variance of an estimator using sliding blocks statistics is never greater than the asymptotic variance of its counterpart based on disjoint blocks.

Sometimes block based extreme value statistics are motivated by the interpretation that all large values in such a block form a cluster of extremes. In another interpretation, all large values which are not separated in time by a certain number of smaller values form a cluster. This leads to so-called runs estimators, the best-known example of which is the estimator of the extremal index, proposed by \cite{hsing1993}. Such runs estimators can be considered as a special type of sliding blocks estimators and can thus be analyzed with the techniques developed in this paper under comparable conditions as estimators based on disjoint blocks.  It turns out that both types of estimators of the extremal index have the same asymptotic variance. While the asymptotic normality of both estimators has already been proved by \cite{weissman1998}, the equality of their asymptotic variances has been overlooked, because the variances were expressed differently. In addition, we establish the asymptotic normality of the direct sliding blocks analog to the disjoint blocks estimator. Under mild conditions, this estimator has the same asymptotic variance, too. This application demonstrates that, by analyzing different estimators of the same parameter in a unifying framework, one may gain new insights.

The paper is organized as follows. In Section \ref{section:sliding}, we first establish sufficient conditions for the convergence of empirical processes of sliding blocks statistics. Table \ref{tab:overview} provides an overview of several sequences of real and integer numbers arising in this context.  In Subsection \ref{section:sliidng.vs.disjoint}, the asymptotic variances of estimators using sliding and disjoint blocks, respectively, are compared. In Section \ref{section:extremal.index}, the general theory is applied to three estimators of the extremal index. Process convergence in the general abstract setting is presented in Appendix \ref{section:abstract}, while all proofs are collected in Appendix \ref{section:proofs}. Refinements to some of the results of this paper and detailed sufficient conditions for the asymptotic normality of statistics considered in Subsection \ref{section:sliidng.vs.disjoint} are presented in a Supplement.

Throughout the paper, $(E,\|\cdot \|)$ denotes a complete normed vector space and $E_{\cup}:=\bigcup_{n\in\mathbb{N}}E^{n}$ the set of vectors of arbitrary length with $E$-valued components. $\mathbb{N}$ denotes the natural numbers excluding $0$. For any doubly indexed sequence $Q_{n,i}$, $1\le i\le m_n$, of random variables that are identically distributed, $Q_n$ denotes a generic random variable with the same distribution as $Q_{n,1}$. Outer probabilities are denoted by $P^*$, outer expectations by $E^*$. Weak convergence is indicated by $\stackrel{w}{\to}$, while $\stackrel{P}{\to}$ denotes convergence in probability and $\stackrel{P^*}{\to}$ convergence in outer probability. The positive part of any $x\in\R$ is denoted by $x^+:=\max(x,0)$.

\section{Empirical processes of sliding blocks statistics}
\label{section:sliding}

Throughout this section we assume that $(X_{n,i})_{1\leq i\leq n, n\in\mathbb{N}}$ is a triangular array of row-wise stationary $E$-valued random variables.
First we establish conditions under which  an empirical process of sliding blocks statistics of the type
\begin{equation}  \label{eq:defbarZn}
\bar{Z}_n(g):=\frac{1}{\sqrt{p_n}b_n(g)}\sum_{j=1}^{n-s_n+1}\left(g(Y_{n,j})-E g(Y_{n,j})\right), \qquad g\in\mathcal{G},
\end{equation}
converges to a Gaussian process in the space $\ell^\infty(\GG)$ of bounded functions on $\GG$, endowed with the supremum norm. The normalizing sequence $\sqrt{p_n}b_n(g)\to \infty$ is discussed below.


To this end, we will apply the general abstract results presented in Appendix \ref{section:abstract} to
\begin{equation} \label{eq:Vnidefsliding}
V_{n,i}(g):= \frac{1}{b_n(g)}\sum_{j=1}^{r_n} g(Y_{n,(i-1)r_n+j})
\end{equation}
where $r_n$ denotes a sequence that grows faster than $s_n$ but slower than $n$. Furthermore, $r_n$ is chosen such that it is unlikely to have any extreme value in a sequence of $r_n$ consecutive observations. More precisely, we assume
\begin{equation} \label{eq:pndef}
  p_n:=P\{\exists g\in\mathcal{G} : V_{n}(g)\neq 0\}\to 0
\end{equation}
as $n\to\infty$, where $V_n$ has the same distribution as any $V_{n,i}$. The set $\{\exists g\in\mathcal{G} : V_{n}(g)\neq 0\}$ is measurable under the following condition:
\begin{itemize}
	\item[\bf(D0)] The processes $V_{n}$, $n\in\N$, are separable.
\end{itemize}
Condition (D0) helps to avoid measurability problems;  in particular, it is fulfilled if $\mathcal{G}$ is finite.
Note that $\bar{Z}_n$ can be approximated by
%
%
\begin{align}
\label{eq:zng.in.sliding.block}
Z_n(g)&:=\frac{1}{\sqrt{p_n}b_n(g)}\sum_{j=1}^{m_nr_n}\left(g(Y_{n,j})-E g(Y_{n,j})\right)\\
&=\frac{1}{\sqrt{p_n}}\sum_{i=1}^{m_n}\left(V_{n,i}(g)-E V_{n,i}(g)\right), \qquad g\in\mathcal{G},
\end{align}
with $m_n:=\floor{(n-s_n+1)/r_n}$. We will see below that under suitable conditions the last $n-s_n+1-m_nr_n<r_n$ summands in definition \eqref{eq:defbarZn} of $\bar Z_n$ are asymptotically negligible.

We will prove process convergence using the well-known ``big blocks, small blocks'' technique where each $Y_{n,j}$ takes over the role of a single observation and $r_n$ is the length of the big blocks. In addition, we need to choose the length $l_n$ of the smaller blocks which must not be smaller than $s_n$, so that $Y_{n,j}$ and $Y_{n,j+l_n}$ do not overlap.
Moreover, we assume that the dependence between observations separated in time by $l_n-s_n$ vanishes asymptotically. The strength of dependence will be measured by the mixing coefficients
\begin{equation} \label{eq:betaXdef}
 \beta_{n,k}^{X} := \sup_{1\le l\le n-k-1} E \Big[
\mathop{\text{sup}}_{B\in\BB_{n,l+k+1}^n} | P(B|\BB_{n,1}^l)-P(B)|\Big]
\end{equation}
where $\BB_{n,i}^j$ denotes the
$\sigma$-field generated by $(X_{n,l})_{i\le l\le j}$.
To summarize, we require the following conditions on the observational scheme, the different sequences and the function class:
\begin{itemize}
	\item[\bf(A1)] $(X_{n,i})_{1\leq i\leq n}$ is stationary for all $n\in\mathbb{N}$.
	
	\item[\bf(A2)] The sequences $l_n,r_n,s_n\in\N$, $p_n$ defined in \eqref{eq:pndef}, and $b_n(g)>0$, $g\in\GG$, satisfy $s_n\le l_n=\ord(r_n)$, $r_n=\ord(n)$, $p_n\to 0$ and $r_n=\ord\big(\sqrt{p_n} \inf_{g\in\GG}b_n(g)\big)$.
	\item[\bf(MX)] $m_n\beta_{n,l_n-s_n}^{X}\to 0$ for $m_n:=\floor{(n-s_n+1)/r_n}$.
\end{itemize}
An overview of the sequences and their interpretations can be found in Table \ref{tab:overview}.
Finally, to ensure the convergence of the finite dimensional marginal distributions (fidis) of $\bar Z_n$, we assume
\begin{itemize}
	\item[\bf(C)] There exists a function $c:\GG^2\to\R$ such that
\begin{equation}
	\frac{m_n}{p_n}Cov\left(V_{n}(g),V_{n}(h)\right)\rightarrow c(g,h), \qquad \forall\, g,h\in\mathcal{G}.
	\end{equation}
\end{itemize}	
Our first result deals with the convergence of the fidis if $\GG$ is uniformly bounded.
\begin{theorem}
	\label{cor: g besch bedingung sliding blocks}
	Suppose $g_{\max}=\sup_{g\in\mathcal{G}}|g|$ is bounded and measurable and the conditions
	(A1), (A2), (D0) and (MX) are met.
	Moreover, assume
	\begin{equation}
	\label{eq: bed g besch anzahl ungleich 0}
	E\bigg[\bigg(\sum_{j=1}^{r_n} \mathds{1}_{\left\{g(Y_{n,j}) \neq 0\right\}}\bigg)^2 \bigg] =\Ord\bigg(\frac{p_nb_n^2(g)}{m_n}\bigg), \qquad\forall g\in\mathcal{G}.
	\end{equation}
	Then
	\begin{equation}
	\label{eq:sliding fidi con wie ursprung proz}
	  \sup_{g\in \GG}|Z_n(g)-\bar{Z}_n(g)|\xrightarrow{P^*} 0.
	\end{equation}
	If, in addition, (C) is fulfilled, then the fidis of each of the empirical processes $(Z_n(g))_{g\in\mathcal{G}}$ and $(\bar{Z}_n(g))_{g\in\mathcal{G}}$ converge weakly to the fidis of a Gaussian process $(Z(g))_{g\in\mathcal{G}}$ with covariance function $c$. 	
\end{theorem}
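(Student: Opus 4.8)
The plan is to establish the two assertions in turn. The uniform approximation \eqref{eq:sliding fidi con wie ursprung proz} rests only on (A1), (A2) and the boundedness of $g_{\max}$, whereas the convergence of the fidis is obtained by applying the abstract central limit theorem of Appendix~\ref{section:abstract} to the big-block statistics $V_{n,i}$ defined in \eqref{eq:Vnidefsliding}.

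For the approximation, observe that $\bar Z_n(g)-Z_n(g)$ is made up precisely of the $k_n:=n-s_n+1-m_nr_n<r_n$ trailing summands of \eqref{eq:defbarZn}. Bounding $|g|\le g_{\max}$ and using row-wise stationarity for the centring terms gives
$$
\sup_{g\in\GG}|Z_n(g)-\bar Z_n(g)|\le\frac{1}{\sqrt{p_n}\,\inf_{g\in\GG}b_n(g)}\Big(\sum_{j=m_nr_n+1}^{n-s_n+1}g_{\max}(Y_{n,j})+k_n\,Eg_{\max}(Y_{n,1})\Big).
$$
Since $g_{\max}\le M$ for some constant $M$ and there are fewer than $r_n$ summands, the right-hand side is at most $2Mr_n/(\sqrt{p_n}\inf_{g\in\GG}b_n(g))$, which tends to $0$ by the last requirement in (A2). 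As this majorant is deterministic and $g_{\max}$ is measurable, convergence in outer probability is immediate; condition \eqref{eq: bed g besch anzahl ungleich 0} does not seem to be required for this step.

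For the fidis I would write $Z_n(g)=p_n^{-1/2}\sum_{i=1}^{m_n}(V_{n,i}(g)-EV_{n,i}(g))$ and regard each block $Y_{n,j}$ as a single observation, so that the $V_{n,i}$ are functionals of consecutive, disjoint stretches of $r_n$ such super-observations. Because $Y_{n,j}$ and $Y_{n,j+l_n}$ do not overlap, the mixing coefficients of the super-observation sequence at lag $l_n$ are dominated by $\beta^X_{n,l_n-s_n}$, so by (MX) the small blocks of length $l_n$ decouple the big blocks in the abstract framework. It then remains to verify its hypotheses: boundedness together with \eqref{eq: bed g besch anzahl ungleich 0} yields the second-moment bound $E[V_n(g)^2]=\Ord(p_n/m_n)$, while (A2) gives the uniform, deterministic bound $|V_n(g)|\le Mr_n/\inf_{g\in\GG}b_n(g)=\ord(\sqrt{p_n})$, which renders the Lindeberg/negligibility condition automatic, and (C) supplies the limiting covariances. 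A Cram\'er--Wold reduction applied to $W_{n,i}:=\sum_k\lambda_k V_{n,i}(g_k)$ turns the multivariate statement into a one-dimensional block CLT whose limiting variance $\sum_{k,k'}\lambda_k\lambda_{k'}c(g_k,g_{k'})$ is read off from (C) by bilinearity; the fidis of $\bar Z_n$ then coincide with those of $Z_n$ by the first assertion and Slutsky's lemma.

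I expect the main obstacle to lie not in the negligibility of an individual block---which is immediate from boundedness and (A2)---but in the passage to the abstract framework: one has to confirm that after the small blocks decouple the big blocks the residual cross-block covariances are negligible, so that the limiting covariance is exactly $c$, and that the $s_n$-overlap of neighbouring super-observations is absorbed into the shift $l_n-s_n$ in the mixing coefficient. The quantitative core of this is the second-moment control $E[V_n(g)^2]=\Ord(p_n/m_n)$ furnished by \eqref{eq: bed g besch anzahl ungleich 0}; the remainder is bookkeeping within the machinery of Appendix~\ref{section:abstract}.
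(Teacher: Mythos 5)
Your treatment of \eqref{eq:sliding fidi con wie ursprung proz} is correct and is essentially the paper's argument (you are also right that \eqref{eq: bed g besch anzahl ungleich 0} plays no role in that step), and your deterministic bound $|V_n(g)|\le \|g_{\max}\|_\infty r_n/\inf_{g\in\GG}b_n(g)=\ord(\sqrt{p_n})$ is exactly how the paper disposes of the Lindeberg-type conditions. The gap is in the fidi step, where your appeal to the abstract framework skips its key hypothesis. Theorem \ref{satz: emp prozess fidi} is not formulated in terms of mixing of the ``super-observations'' at lag $l_n$ plus a second-moment bound on $V_n$; it requires an auxiliary stationary sequence of approximating processes $\tilde V_{n,i}$ satisfying (M$\tilde{V}$), i.e.\ $m_n\beta^{\tilde V}_{n,0}\to 0$, and the approximation condition ($\Delta$). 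You never construct such a $\tilde V_{n,i}$, and taking $\tilde V_{n,i}=V_{n,i}$ is not an option: consecutive $V_{n,i}$ depend on adjacent stretches of data (which even overlap in $s_n-1$ observations), so asymptotic independence at lag zero fails in general.

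The paper's choice is $\tilde V_{n,i}(g)=b_n(g)^{-1}\sum_{j=1}^{r_n-l_n}g(Y_{n,(i-1)r_n+j})$, which creates a data gap of $l_n-s_n$ between consecutive $\tilde V_{n,i}$, so that (M$\tilde{V}$) follows from (MX); the price is that one must verify ($\Delta$) for the discarded part $\Delta_n(g)\overset{d}{=}b_n(g)^{-1}\sum_{j=1}^{l_n}g(Y_{n,j})$. This---and not the bound $E[V_n(g)^2]=\Ord(p_n/m_n)$, which is not among the hypotheses of Theorem \ref{satz: emp prozess fidi} and whose variance part already follows from (C)---is the one place where \eqref{eq: bed g besch anzahl ungleich 0} is consumed. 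The verification rests on stationarity and the nonnegativity of the indicator sums: the big-block sum dominates the sum over $\floor{r_n/l_n}$ disjoint sub-blocks of length $l_n$, whence
\begin{equation}
E\bigg[\bigg(\sum_{j=1}^{l_n}\ind{\{g(Y_{n,j})\ne 0\}}\bigg)^2\bigg]\le\frac{1}{\floor{r_n/l_n}}\,E\bigg[\bigg(\sum_{j=1}^{r_n}\ind{\{g(Y_{n,j})\ne 0\}}\bigg)^2\bigg]=\Ord\bigg(\frac{l_n}{r_n}\cdot\frac{p_nb_n^2(g)}{m_n}\bigg)=\ord\bigg(\frac{p_nb_n^2(g)}{m_n}\bigg),
\end{equation}
so that $E[(\Delta_n(g))^2]=\ord(p_n/m_n)$, i.e.\ \eqref{eq:bed-c1-zweites.moment} and hence ($\Delta$) hold. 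What you dismiss as ``bookkeeping''---the negligibility of the residual cross-block dependence after decoupling---is precisely this step; without constructing $\tilde V_{n,i}$ and proving ($\Delta$), the machinery of Appendix \ref{section:abstract} (in particular Lemma \ref{lemma:emp.prozess.fidi.unab}, which replaces the blocks by independent copies) cannot be invoked, and your Cram\'er--Wold reduction has nothing to act on.
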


The following criterion is often useful to verify condition \eqref{eq: bed g besch anzahl ungleich 0}:
\begin{itemize}
	\item[\bf(S)] For all $g\in\mathcal{G}$ and $n\in\mathbb{N}$
	\begin{equation}
	       \sum_{k=1}^{r_n}  P\left\{g(Y_{n,1})\neq 0, g(Y_{n,k})\neq 0\right\} = \Ord\bigg( \frac{p_n b_n^2(g)}n\bigg).
	\end{equation}
\end{itemize}

\begin{lemma}
	\label{lemma:pratts lemma bedingung allgemeines setting}
	If condition (S) is satisfied, then \eqref{eq: bed g besch anzahl ungleich 0} holds.
\end{lemma}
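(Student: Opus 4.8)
The statement is a pure second-moment estimate, so the plan is to expand the square in \eqref{eq: bed g besch anzahl ungleich 0}, use row-wise stationarity to reduce the resulting double sum over block positions to a one-dimensional sum over the time lag, recognise that this one-dimensional sum is exactly the quantity controlled by condition (S), and finally convert the rate $r_n/n$ into $1/m_n$ using the definition of $m_n$.

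First I would set $N_n(g):=\sum_{j=1}^{r_n}\ind{\{g(Y_{n,j})\neq 0\}}$ and expand
$$
 E\big[N_n(g)^2\big]=\sum_{j=1}^{r_n}\sum_{k=1}^{r_n} P\big\{g(Y_{n,j})\neq 0,\ g(Y_{n,k})\neq 0\big\}.
$$
By stationarity (A1) the joint law of $(Y_{n,j},Y_{n,k})$ depends only on $|k-j|$, so each summand equals $a_{|k-j|}:=P\{g(Y_{n,1})\neq 0,\ g(Y_{n,1+|k-j|})\neq 0\}$. For a fixed $j$, as $k$ runs through $1,\dots,r_n$ the lag $|k-j|$ equals $0$ once, takes every positive value at most twice, and never exceeds $r_n-1$; hence $\sum_{k=1}^{r_n}a_{|k-j|}\le 2\sum_{m=0}^{r_n-1}a_m$. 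Summing over the $r_n$ values of $j$ yields
$$
 E\big[N_n(g)^2\big]\le 2\,r_n\sum_{m=0}^{r_n-1}a_m
 =2\,r_n\sum_{k=1}^{r_n}P\big\{g(Y_{n,1})\neq 0,\ g(Y_{n,k})\neq 0\big\}.
$$

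The remaining sum is precisely the left-hand side of (S), so it is $\Ord\big(p_nb_n^2(g)/n\big)$. To finish, I would note that $m_n r_n=\floor{(n-s_n+1)/r_n}\,r_n\le n-s_n+1\le n$, so $r_n/n\le 1/m_n$, and therefore
$$
 E\big[N_n(g)^2\big]=\Ord\bigg(\frac{r_n p_n b_n^2(g)}{n}\bigg)=\Ord\bigg(\frac{p_nb_n^2(g)}{m_n}\bigg),
$$
which is \eqref{eq: bed g besch anzahl ungleich 0}.

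\textbf{Expected obstacle.} The computation is elementary and I anticipate no substantial difficulty. The only points requiring care are keeping the $\Ord$-constant uniform in $g\in\GG$ as it is propagated through the three steps (which it is, since the only numerical factors introduced are the $2$ from the lag-counting and the $1$ from $m_n r_n\le n$), and the combinatorial bookkeeping in the counting step, namely the factor $2$ and the restriction $|k-j|\le r_n-1$. Measurability of the events $\{g(Y_{n,j})\neq 0\}$ is already implicit in the formulation of (S).
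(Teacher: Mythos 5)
Your proof is correct and takes essentially the same route as the paper's own argument: expand the square into a double sum of joint probabilities, use stationarity (A1) to collapse it to a lag sum bounded by $2r_n$ times the left-hand side of (S), and then convert $r_n/n$ into $1/m_n$ via $m_nr_n\le n-s_n+1\le n$. The paper's version only differs cosmetically, keeping the exact pair-count factor $\left(1-\frac{k-1}{r_n}\right)\le 1$ in place of your bound of at most two pairs per lag.
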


For instance, in Section 3 we consider the bounded functions
\begin{align}
g_1(x_1,\ldots,x_s) :=\mathds{1}_{\{\max_{1\le i\le s} x_i>1\}},\qquad
g_2(x_1,\ldots,x_s) :=\mathds{1}_{\{x_1>1\}}
\end{align}
to analyze the sliding blocks estimator of the extremal index. Here appropriate  normalizing sequences are $b_n(g_1)= \sqrt{m_n}s_n $ and $b_n(g_2)= \sqrt{m_n}$. Note that already in this rather simple example, the normalizing sequences converge at a different rate for different functions. Indeed, it is somewhat archetypical that the event $g(Y_{n,1})\neq 0$ either depends on all observations of the block $Y_{n,1}$ (as for $g=g_1$), or it only depends on a single fixed observation $X_{n,i}$ (as for $g=g_2$); usually the normalizing factor $b_n(g)$ is larger by the factor $s_n$ in the former case.

To ensure asymptotic equicontinuity or tightness of the processes $(Z_n(g))_{g\in\mathcal{G}}$ and $(\bar{Z}_n(g))_{g\in\mathcal{G}}$, and thus  process convergence if the conditions of Theorem \ref{cor: g besch bedingung sliding blocks} are fulfilled, we need the following additional conditions.
\begin{itemize}
	\item[\bf(D1)] There exists a semi-metric $\rho$ on $\mathcal{G}$ such that $\mathcal{G}$ is totally bounded (i.e.\ for all $\epsilon>0$, it can be covered by finitely many balls with radius $\epsilon$ w.r.t.\  $\rho$) and
	\begin{equation}
	\lim_{\delta\downarrow 0} \limsup_{n\to\infty} \sup_{g,h\in\mathcal{G},\rho(g,h)<\delta} \frac{m_n}{p_n } E\big[(V_n(g)-V_n(h))^2\big] = 0.
	\end{equation}

	\item[\bf(D2)]
	\begin{equation}
	\lim_{\delta\downarrow 0}\limsup_{n\to\infty}\int_{0}^{\delta} \sqrt{\log N_{[\cdot]}(\epsilon,\mathcal{G},L_2^n) } \, d\epsilon =0,
	\end{equation}
	where $N_{[\cdot]}(\epsilon,\mathcal{G},L_2^n)$ denotes the $\epsilon$-bracketing number of $\GG$ w.r.t.\ $L_2^n$,  i.e.\ the smallest number $N_\epsilon$ such that for each $n\in\mathbb{N}$ there exists a partition $(\mathcal{G}_{n,k}^{\epsilon})_{1\leq k\leq N_{\epsilon}}$ of $\mathcal{G}$ satisfying
	\begin{equation}
	\frac{m_n}{p_n } E^*\Big[\sup_{g,h\in\mathcal{G}_{n,k}^{\epsilon}} (V_n(g)-V_n(h))^2\Big]\leq \epsilon^2,  \qquad\forall 1\leq k\leq N_{\epsilon}.
	\end{equation}
	
	
	\item[\bf(D3)] Denote by $N(\epsilon,\mathcal{G},d_n)$ the $\epsilon$-covering number of $\GG$ w.r.t.\ the random semi-metric
	\begin{equation}
	d_n(g,h)=\bigg(\frac{1}{p_n}\sum_{i=1}^{m_n}(V_{n,i}^*(g)-V_{n,i}^*(h))^2\bigg)^{1/2}
	\end{equation}
	 with  $V_{n,i}^*$, $1\leq i\leq m_n$,  independent copies of $V_{n,1}$, i.e.\ $N(\epsilon,\mathcal{G},d_n)$ is the smallest number of balls with respect to $d_n$ with radius $\epsilon$ which is needed to cover $\mathcal{G}$. We assume
	\begin{equation}
	\lim_{\delta\downarrow 0} \limsup_{n\to\infty} P^*\bigg\{\int_{0}^{\delta} \sqrt{\log(N(\epsilon,\mathcal{G},d_n))}d\epsilon>\tau\bigg\}=0,\qquad\forall \tau>0.
	\end{equation}
\end{itemize}
Roughly speaking, condition (D1) ensures the continuity of the process w.r.t.\ $\rho$ while (D2) and (D3) ensure that the parameter set $\GG$ is not too complex. In particular, condition (D3) is satisfied if $\mathcal{G}$ is a VC-class (cf.\ \cite{drees2010}, Remark 2.11).


\begin{theorem}
	\label{cor: g besch bedingung sliding blocks process}
	Suppose the conditions of Theorem \ref{cor: g besch bedingung sliding blocks} are satisfied.
	If, in addition, one of the following sets of conditions
	\begin{itemize}
		\item[(i)] (D1) and (D2), or
        \item[(ii)]   (D1) and (D3)	
	\end{itemize}
	is fulfilled, then each of the empirical processes $(Z_n(g))_{g\in\mathcal{G}}$ and $(\bar{Z}_n(g))_{g\in\mathcal{G}}$ converge weakly to a Gaussian process with covariance function $c$.	
\end{theorem}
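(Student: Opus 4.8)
The plan is to derive full process convergence from the finite-dimensional convergence already in hand by adjoining asymptotic tightness, and then to transfer everything from $Z_n$ to $\bar Z_n$. Since Theorem \ref{cor: g besch bedingung sliding blocks} already yields $\sup_{g\in\GG}|Z_n(g)-\bar Z_n(g)|\xrightarrow{P^*}0$, it suffices to prove that $(Z_n(g))_{g\in\GG}$ converges weakly in $\ell^\infty(\GG)$; the limit for $\bar Z_n$ is then the same, because a uniformly negligible perturbation does not change the weak limit. By the standard characterization of weak convergence in $\ell^\infty(\GG)$, and because (D1) supplies a semi-metric $\rho$ making $\GG$ totally bounded, it is enough to combine the fidi convergence from Theorem \ref{cor: g besch bedingung sliding blocks} (which also identifies the limit as the Gaussian process with covariance $c$) with asymptotic $\rho$-equicontinuity, i.e.
\begin{equation*}
\lim_{\delta\downarrow 0}\limsup_{n\to\infty} P^*\Big\{\sup_{g,h\in\GG,\,\rho(g,h)<\delta}|Z_n(g)-Z_n(h)|>\epsilon\Big\}=0,\qquad\forall\,\epsilon>0.
\end{equation*}

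First I would recast $Z_n(g)=\frac{1}{\sqrt{p_n}}\sum_{i=1}^{m_n}(V_{n,i}(g)-EV_{n,i}(g))$ as an instance of the abstract empirical process studied in Appendix \ref{section:abstract}, with the big blocks $V_{n,i}$ playing the role of the individual summands and $1/\sqrt{p_n}$ the row-dependent normalization. The task then reduces to checking that the hypotheses of the abstract process-convergence theorem are implied by the assumptions at hand: conditions (A2) and (MX) control the block lengths and the serial dependence, condition (D1) provides total boundedness together with the $L_2$-modulus bound, and the complexity of $\GG$ is governed either by the bracketing integral in (D2) or by the random covering integral with respect to $d_n$ in (D3). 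Under (i) I would invoke the bracketing branch of the abstract theorem, under (ii) the random-entropy branch.

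The core of the equicontinuity estimate rests on the ``big blocks, small blocks'' decoupling. Using (MX), a Berbee-type coupling, applied after trimming the last $l_n$ sliding-block statistics from each big block so that the retained parts are separated by $l_n-s_n$ observations, replaces the dependent big blocks by independent copies $V_{n,i}^*$ at a total-variation cost of order $m_n\beta_{n,l_n-s_n}^X\to 0$, while the trimmed small blocks of length $l_n=\ord(r_n)$ contribute negligibly. For the resulting independent triangular-array sum I would apply a maximal inequality over $\GG$: in case (i) a bracketing maximal inequality tied to $L_2^n$ and controlled by (D2); in case (ii) symmetrization followed by Dudley's chaining with respect to the random semi-metric $d_n$, controlled by (D3). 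In both cases condition (D1) is what converts the chaining bound, phrased through $L_2^n$ resp.\ $d_n$, back into a bound over $\rho$-balls of small radius, so that the modulus of continuity displayed above is driven to zero as $\delta\downarrow 0$.

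I expect the main obstacle to be precisely this decoupling-plus-chaining step under the rare-event normalization. Classical uniform central limit theorems do not apply directly: the summands $V_{n,i}$ form a triangular array of serially dependent, $n$-dependent random functions, most of which vanish because $p_n\to 0$, while the factor $1/\sqrt{p_n}$ inflates the occasional nonzero blocks. The delicate points are (a) making the coupling error and the small-block remainder vanish simultaneously with the entropy integrals kept uniformly small in $n$, and (b) ensuring, via the $L_2$-modulus bound in (D1) and the entropy integrals in (D2) resp.\ (D3), that the chaining constants stay uniformly bounded so that the resulting modulus of continuity indeed tends to zero. Once asymptotic equicontinuity is secured, the weak convergence of $Z_n$, and hence of $\bar Z_n$, follows immediately.
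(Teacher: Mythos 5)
Your high-level plan is the same as the paper's: reduce to $Z_n$ via \eqref{eq:sliding fidi con wie ursprung proz}, view $Z_n$ as the abstract process of Appendix \ref{section:abstract} built from the big-block variables $V_{n,i}$, decouple by $\beta$-mixing, and then use a bracketing result under (D2) resp.\ a random-entropy result under (D3) (these are Theorems 2.11.9 and 2.11.1 of van der Vaart and Wellner, invoked inside Theorem \ref{satz: asymp tight_equicon}). The paper's proof is literally this reduction: it applies Theorem \ref{satz: asymp tight_equicon} to the $V_{n,i},\tilde V_{n,i}$ from the proof of Theorem \ref{cor: g besch bedingung sliding blocks}, notes that (V), (MX$_2$) and (L2) were verified there, that the (D)-conditions are assumed, and that (B) follows from \eqref{eq:VnGbound}. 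You never mention the moment/Lindeberg-type hypotheses (B), (L1), (L2) of the abstract tightness theorem; this is a minor omission, since the bound $V_n(\GG)\le r_n\|g_{\max}\|_\infty/\inf_{g\in\GG}b_n(g)=\ord(\sqrt{p_n})$ from \eqref{eq:VnGbound} makes all of them immediate, but they do need to be cited.

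The genuine gap is in your decoupling step. You trim the last $l_n$ sliding-block statistics from each big block, couple the trimmed blocks $\tilde V_{n,i}$ into independent copies at total-variation cost $m_n\beta^X_{n,l_n-s_n}\to0$, and assert that the trimmed remainder ``contributes negligibly''. For the fidis this is exactly condition ($\Delta$) and is handled in Lemma \ref{lemma:emp.prozess.fidi.unab}; but for tightness you need the remainder process to be negligible \emph{uniformly} over $\GG$, i.e.\ $\sup_{g\in\GG}\big|p_n^{-1/2}\sum_{i=1}^{m_n}(\Delta_{n,i}(g)-E\Delta_{n,i}(g))\big|\stackrel{P^*}{\to}0$ with $\Delta_{n,i}=V_{n,i}-\tilde V_{n,i}$. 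This does not follow from the stated hypotheses: condition ($\Delta$) and the bound \eqref{eq:Deltan2ndmombound} are pointwise in $g$, and controlling the supremum would itself require a maximal inequality, hence entropy conditions, for the small-block process --- yet (D1)--(D3) are formulated for $V_n$, not for $\Delta_n$ or $\tilde V_n$ (and they do not transfer automatically to sub-sums). The paper's proof of Theorem \ref{satz: asymp tight_equicon} sidesteps this entirely: it never trims. Instead it splits $\sum_{i=1}^{m_n}V_{n,i}$ into $k$ alternating subsequences $Z_n^{(i)}$ of \emph{full} blocks; for $k=2$ these blocks are already separated by $r_n-s_n\ge l_n-s_n$ observations, so Eberlein's lemma decouples them directly at cost $m_n\beta^X_{n,r_n-s_n}\le m_n\beta^X_{n,l_n-s_n}\to0$ (i.e.\ (MX) implies (MX$_2$)), the van der Vaart--Wellner theorems apply to the independent copies with the (D)-conditions exactly as stated, and tightness of each $Z_n^{(i)}$ carries over to the finite sum $Z_n$; no remainder process ever appears. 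To repair your argument you would either have to prove the uniform negligibility of the small-block process under additional assumptions, or switch to this alternating-blocks decomposition.
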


\begin{table}[t]
	\begin{tabular}{l|l|l|l|l|l}
		& \textit{interpretation} & \textit{$\to$} & \textit{main constraints} & \textit{typ.\ behavior} &  \textit{first use} \\\hline
		$n$ & number of observations & $\infty$ & & & p.\pageref{page.n} \\ \hline
		$s_n$ & length of sliding blocks &  &  & $s_n\to\infty$ & \eqref{eq:Ydef} \\\hline
		$r_n$ & length of big block & $\infty$ & in Sect.\ 2: \parbox{3.5cm}{$r_n=\ord(n)$\\ $r_n=\ord\big(\sqrt{p_n} \inf_{g\in\GG}b_n(g)\big)$}  & & \eqref{eq:Vnidefsliding} \\
		& & & in Sect.\ 3: \parbox{3.5cm}{$r_nv_n\to 0$, $r_n=\ord\big(\sqrt{nv_n}\big)$}   & & \\\hline
		$l_n$ & length of small block & $\infty$ & $s_n\leq l_n =\ord(r_n)$ & & (A2) \\\hline
		$m_n$ & number of big blocks & $\infty$ & $m_n\asymp n/r_n$ & & \eqref{eq:zng.in.sliding.block} \\\hline
		$u_n$ & threshold for $X$ to be large & $\infty$ & & & p.\pageref{page.n} \\\hline
		$v_n$ & $P\{X_{n,1}\neq 0\}$& 0 & \parbox{4cm}{$nv_n\to \infty$ } & & p.\pageref{page.v}\\ \hline
		$p_n$ & $P\{\exists 1\leq i\leq r_n: \,X_{n,i}\neq 0\}$ & $ 0$ & $r_n=\ord\big(\sqrt{p_n} \inf_{g\in\GG}b_n(g)\big)$ & $p_n\asymp r_nv_n$ & \eqref{eq:pndef} \\\hline
		$b_n(g)$ & normalizing constant & $\infty$ & $\sqrt{p_n}b_n(g)\to\infty$ & \parbox{2.2cm}{ $b_n(g)\asymp\sqrt{m_n}$ or $b_n(g)\asymp\sqrt{m_n}s_n$} & \eqref{eq:defbarZn}  \\\hline
		$a_n$ & normalization in Section \ref{section:sliidng.vs.disjoint} &  &  & $a_n\asymp 1$ & p.\pageref{page.v} \\\hline
	\end{tabular}
	\caption{Overview of sequences occurring in Sections \ref{section:sliding} and \ref{section:sliidng.vs.disjoint}.}
	\label{tab:overview}
\end{table}

So far, we have only discussed the case of bounded functions $g$. This assumption can be dropped if the moment condition \eqref{eq: bed g besch anzahl ungleich 0} is strengthened.

\begin{theorem}
	\label{cor:g unbounded sliding block con}
   \begin{itemize}
	 \item[(i)] Suppose all conditions of Theorem \ref{cor: g besch bedingung sliding blocks} except for the boundedness of $g_{\max}$ and \eqref{eq: bed g besch anzahl ungleich 0} are met. In addition, we assume $m_n l_nP\{V_n(|g|)\ne 0\}=\ord(r_nb_n^2(g)p_n)$ for all $g\in\GG$ and
	\begin{equation}
	\label{eq: bed g messbar bedingung sliding blocks}
	E\bigg[\bigg( \sum_{i=1}^{r_n}|g(Y_{n,i})|\bigg)^{2+\delta}\bigg]=\Ord\bigg(\frac{p_nb_n^2(g)}{m_n}\bigg),\quad \forall\, g\in\GG,
	\end{equation}
	for some $\delta>0$. Then the fidis of $(Z_n(g))_{g\in\mathcal{G}}$ and of $(\bar{Z}_n(g))_{g\in\mathcal{G}}$ converge to the fidis of the Gaussian process $(Z(g))_{g\in\mathcal{G}}$ defined in Theorem \ref{cor: g besch bedingung sliding blocks}.
    \item[(ii)] If, in addition, $b_n(g)=b_n$ is the same for all $g\in\GG$, \eqref{eq: bed g messbar bedingung sliding blocks} holds for $g=g_{\max}$ and the conditions (i) or (ii) of Theorem \ref{cor: g besch bedingung sliding blocks process} are fulfilled, then the processes $(Z_n(g))_{g\in\mathcal{G}}$ and $(\bar{Z}_n(g))_{g\in\mathcal{G}}$ converge weakly to $(Z(g))_{g\in\mathcal{G}}$ uniformly.
  \end{itemize}
\end{theorem}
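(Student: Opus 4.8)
The plan is to deduce both parts from the abstract convergence results of Appendix~\ref{section:abstract}, applied as in the proof of Theorem~\ref{cor: g besch bedingung sliding blocks} to the block array $V_{n,i}$ of \eqref{eq:Vnidefsliding}, the only new feature being that boundedness of the envelope $g_{\max}$ is no longer available to control moments. For part~(i) I would first reestablish the analog of \eqref{eq:sliding fidi con wie ursprung proz}, i.e.\ that for each fixed $g$ the fewer than $r_n$ terms by which $\bar Z_n(g)$ and $Z_n(g)$ differ are asymptotically negligible. By Lyapunov's inequality, \eqref{eq: bed g messbar bedingung sliding blocks} yields $E[(\sum_{j=1}^{r_n}|g(Y_{n,j})|)^2]=\Ord\big((p_nb_n^2(g)/m_n)^{2/(2+\delta)}\big)$, so Chebyshev's inequality bounds the variance of the normalized leftover sum by a multiple of $\big((p_nb_n^2(g))^{\delta/(2+\delta)}m_n^{2/(2+\delta)}\big)^{-1}\to 0$, using $\sqrt{p_n}\,b_n(g)\to\infty$ and $m_n\to\infty$ from (A2). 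Hence it suffices to treat $Z_n$.

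For the fidis of $Z_n$, the Cram\'er--Wold device reduces the problem to a central limit theorem for the row sums $p_n^{-1/2}\sum_{i=1}^{m_n}(W_{n,i}-EW_{n,i})$ of the linear combinations $W_{n,i}:=\sum_k\lambda_kV_{n,i}(g_k)$. Following the big-block/small-block scheme underlying Appendix~\ref{section:abstract}, I would split each big block of $r_n$ consecutive statistics so as to discard a boundary block of $\asymp l_n$ of them, use (MX) to replace the resulting separated blocks by independent copies at total cost $m_n\beta_{n,l_n-s_n}^{X}\to 0$, and then invoke a Lyapunov central limit theorem. Condition (C) identifies the limiting covariance $c(g,h)$, the cross-block covariances vanishing after the separation, while the additional assumption $m_nl_nP\{V_n(|g|)\ne 0\}=\ord(r_nb_n^2(g)p_n)$ controls the variance contribution of the discarded boundary terms. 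The decisive point is that \eqref{eq: bed g messbar bedingung sliding blocks} is exactly a Lyapunov condition: since $|V_{n,i}(g)|\le b_n^{-1}(g)\sum_{j=1}^{r_n}|g(Y_{n,(i-1)r_n+j})|$, it gives
\begin{equation*}
  \frac{1}{p_n^{1+\delta/2}}\sum_{i=1}^{m_n}E\big|V_{n,i}(g)-EV_{n,i}(g)\big|^{2+\delta}
   =\Ord\Big(\big(\sqrt{p_n}\,b_n(g)\big)^{-\delta}\Big)\longrightarrow 0 ,
\end{equation*}
which takes over the role played by boundedness together with \eqref{eq: bed g besch anzahl ungleich 0} in the proof of Theorem~\ref{cor: g besch bedingung sliding blocks}.

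For part~(ii) I would combine the fidi convergence from part~(i) with the tightness (asymptotic equicontinuity) part of the abstract results, exactly as Theorem~\ref{cor: g besch bedingung sliding blocks process} is obtained from Theorem~\ref{cor: g besch bedingung sliding blocks}. Conditions (D1) and (D2), resp.\ (D1) and (D3), again bound the complexity of $\GG$, but in the unbounded setting the tightness argument additionally needs a uniform moment bound on the envelope. This is supplied by requiring \eqref{eq: bed g messbar bedingung sliding blocks} for $g=g_{\max}$, which plays the part of an integrable-envelope condition; the assumption that $b_n(g)=b_n$ does not depend on $g$ ensures that $g_{\max}$ carries the same normalization as every $g\in\GG$, so that the envelope bound uniformly dominates the increments $V_n(g)-V_n(h)$ entering (D1)--(D3).

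The main obstacle I anticipate is not the Lyapunov computation itself but the bookkeeping in the blocking step: after removing the boundary pieces needed to decouple the overlapping sliding blocks, one must verify that the discarded terms are negligible in the unbounded regime. Quantifying this requires combining \eqref{eq: bed g messbar bedingung sliding blocks} (via H\"older's inequality) with the activity bound $m_nl_nP\{V_n(|g|)\ne 0\}=\ord(r_nb_n^2(g)p_n)$, exploiting that the boundary blocks are shorter than the big blocks by the factor $l_n/r_n\to 0$, and checking that the $2+\delta$ moment control survives the passage to the separated cores uniformly enough to feed the abstract tightness criterion required for part~(ii).
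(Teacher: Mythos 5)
Your plan coincides structurally with the paper's own proof: both reduce to the abstract scheme of Appendix \ref{section:abstract} (Theorem \ref{satz: emp prozess fidi} for the fidis, Theorem \ref{satz: asymp tight_equicon} for part (ii)) applied to the $V_{n,i}$ of \eqref{eq:Vnidefsliding}, both use \eqref{eq: bed g messbar bedingung sliding blocks} as a Lyapunov/Lindeberg-type condition (the paper verifies (L) via H\"older plus Markov; your ratio computation $\frac{1}{p_n^{1+\delta/2}}\sum_{i=1}^{m_n}E|V_{n,i}(g)-EV_{n,i}(g)|^{2+\delta}=\Ord((\sqrt{p_n}b_n(g))^{-\delta})$ is correct and equivalent in effect), and both handle part (ii) by feeding the envelope condition for $g_{\max}$, with common normalization $b_n$, into the tightness criterion to obtain (L2) and (B). Your treatment of the leftover terms $\bar Z_n(g)-Z_n(g)$ via Lyapunov's inequality is also correct, and in fact slightly cleaner than the paper's argument at that point.

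The genuine gap is the step you yourself flag as the main obstacle but do not resolve: negligibility of the discarded boundary blocks, i.e.\ condition ($\Delta$) for $\Delta_n(g)=b_n^{-1}(g)\sum_{j=r_n-l_n+1}^{r_n}g(Y_{n,j})$, which requires $E[\Delta_n^2(g)]=\ord(p_n/m_n)$. The combination you propose, H\"older applied to \eqref{eq: bed g messbar bedingung sliding blocks} together with the activity bound, cannot by itself produce the factor $l_n/r_n$: H\"older only gives
\begin{equation}
E\bigg[\bigg(\sum_{i=1}^{l_n}|g(Y_{n,i})|\bigg)^2\bigg]\le\bigg(E\bigg[\bigg(\sum_{i=1}^{r_n}|g(Y_{n,i})|\bigg)^{2+\delta}\bigg]\bigg)^{2/(2+\delta)}\big(P\{V_n(|g|)\ne 0\}\big)^{\delta/(2+\delta)},
\end{equation}
and inserting the hypotheses yields only $E[\Delta_n^2(g)]=\ord\big((r_n/l_n)^{\delta/(2+\delta)}\,p_n/m_n\big)$, off by a divergent factor. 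Indeed, a bare H\"older argument needs $P\{V_n(|g|)\ne 0\}=\ord(p_nb_n^2(g)/m_n)$, which fails in the standard situation $b_n^2(g)\asymp m_n$, $P\{V_n(|g|)\ne 0\}\asymp p_n$ (e.g.\ the function $h$ in Section \ref{section:extremal.index}). The missing idea is the paper's sub-block comparison: since the $|g(Y_{n,i})|$ are non-negative and stationary, all cross terms in the expanded square are non-negative, whence $E[(\sum_{i=1}^{r_n}|g(Y_{n,i})|)^2]\ge\floor{r_n/l_n}\,E[(\sum_{i=1}^{l_n}|g(Y_{n,i})|)^2]$. This converts the $r_n$-block hypothesis \eqref{eq: bed g messbar bedingung sliding blocks} into an $l_n$-block second-moment bound carrying the factor $l_n/r_n$; combined with $x^2\le x^{2+\delta}+\ind{\{x\ne 0\}}$ for $x\ge 0$ (which is where the activity bound $m_nl_nP\{V_n(|g|)\ne 0\}=\ord(r_nb_n^2(g)p_n)$ enters) it gives $E[\Delta_n^2(g)]=\Ord\big(\tfrac{l_n}{r_nb_n^2(g)}\big(\tfrac{p_nb_n^2(g)}{m_n}+P\{V_n(|g|)\ne 0\}\big)\big)=\ord(p_n/m_n)$. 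Once this inequality is inserted (you may then finish either with this elementary bound or with H\"older applied after the sub-block step), the rest of your argument closes; without it, the boundary step, and hence the whole proof, does not go through.
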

Note that usually $P\{V_n(|g|)\ne 0\}=O(p_n)$; in particular this holds true if $g$ has fixed sign. Then the condition $m_n l_nP\{V_n(|g|)\ne 0\}=\ord(r_nb_n^2(g)p_n)$ is fulfilled for the typical behavior of the sequences outlined in Table \ref{tab:overview}.
As mentioned above, usually it suffices to consider just two different normalizing sequences, say $b_{n,1}$ and $b_{n,2}$. In this case, one may apply Theorem~\ref{cor:g unbounded sliding block con} separately to $(Z_n(g))_{g\in\GG_i}$ for $i\in\{1,2\}$ with $\GG_i:=\{g\in\GG | b_n(g)=b_{n,i},\;\forall\, n\in\N\}$ to conclude that both processes are asymptotically tight. This in turn implies the asymptotic tightness of $(Z_n(g))_{g\in\GG}$ and thus, in view of part (i), its convergence to $(Z(g))_{g\in\mathcal{G}}$. Hence, in fact the extra condition on $b_n$ in part (ii) does not further restrict the setting in the vast majority of applications.

\subsection{Sliding vs.\ disjoint blocks statistics}
\label{section:sliidng.vs.disjoint}

The previous section was devoted to  general limit theorems for sliding blocks statistics. In this section, we want to compare the asymptotic variance of a sliding blocks statistic for a single functional $g$ with that of the corresponding disjoint blocks statistic. Here we use a different parametrization of the normalizing constants, partly because the probability $p_n$ used in the normalization above refers to the whole process and seems inappropriate in the present context, partly to facilitate the comparison of the asymptotic variances. More precisely, we consider the sliding blocks statistic and its disjoint blocks analog
\begin{align}
T_{n}^s(g) & :=\frac{1}{n v_n s_n a_n}\sum_{i=1}^{n-s_n+1}g(Y_{n,i}) \label{eq:vergl sliding stat}\\
T_{n}^d(g) & :=\frac{1}{n v_n a_n}\sum_{i=1}^{\lfloor n/s_n\rfloor}g(Y_{n,(i-1)s_n+1}), \label{eq:vergl disjoin stat}
\end{align}
with $v_n:=P(X_{n,1}\neq 0)\to 0$.\label{page.v} We assume that $a_n$ is chosen such that $E(T_{n}^s(g))$ converges in $\R$, i.e.\ that there exists some $\xi\in\mathbb{R}$ such that
\begin{equation} \label{eq:slidingexpectconv}
E\left[ T_n^s(g)\right]= \frac{1}{s_nv_na_n}E\left[g(Y_{n})\right]\frac{n-s_n+1}{n}\rightarrow \xi.
\end{equation}
Then also $E(T_{n}^d(g))$ tends to $\xi$. Moreover, the difference between both expectations is asymptotically negligible if
\begin{equation}
\label{eq:disjoint vs sliding expectation}
\Big|E\left[ T_n^d(g)-T_n^s(g)\right]\Big|=\frac{1}{s_nv_na_n}|E\left[g(Y_{n})\right]|\cdot \Big|\frac{s_n}{n}\Big\lfloor \frac{n}{s_n}\Big\rfloor -\frac{n-s_n+1}{n}\Big| =\Ord(s_n/n)
\end{equation}
is of smaller order than $(nv_n)^{-1/2}$ (cf.\ \eqref{eq:vergle sliding stat con}, \eqref{eq:vergl disjoint stat con}), which in particular holds under the basic condition $s_nv_n\to0$. In that case, $T_n^s(g)$ will be a more efficient estimator than $T_n^d(g)$ if its asymptotic variance is smaller.


Applying Theorem \ref{cor: g besch bedingung sliding blocks} with $b_n(g)=\sqrt{nv_n/p_n}a_ns_n$, under suitable conditions including
the convergence
\begin{equation} \label{eq:csdef}
  c^{(s)}=\lim_{n\to\infty}\frac{1}{r_nv_ns_n^2a_n^2}Var\bigg(\sum_{i=1}^{r_n}g(Y_{n,i})\bigg)\in (0,\infty),
\end{equation}
 one can prove the asymptotic normality of the sliding blocks statistics
\begin{equation}
\label{eq:vergle sliding stat con}
\sqrt{nv_n}\big(T_n^{s}(g)-E[T_n^{s}(g)]\big)\stackrel{w}{\to} \mathcal{N}(0,c^{(s)}).
\end{equation}
To establish an analogous result for the statistic based on disjoint blocks, one applies Theorem \ref{satz: emp prozess fidi} to $V_{n,i}(g)=\sqrt{p_n/(nv_n a_n^2)}\sum_{j=1}^{r_n/s_n}g(Y_{n,(j-1)s_n+(i-1)r_n+1})$, $1\le i\le m_n$.
Recall that the sequence $r_n$ is only needed in the proofs which use the ``big blocks, small blocks'' technique, i.e.\ it has no operational meaning, but it must be chosen such that the conditions of Theorem \ref{cor: g besch bedingung sliding blocks} resp.\ Theorem \ref{satz: emp prozess fidi} are met. For example, suppose that for a given sequence $(s_n)_{n\in\N}$, $(r_n)_{n\in\N}$ is a sequence such that the assumptions of Theorem \ref{cor: g besch bedingung sliding blocks} are satisfied. Let $r_n^*:=\floor{r_n/s_n}s_n\sim r_n$, so that $l_n=\ord(r_n^*)$, $r_n^*=\ord(n)$ and $m_n^*:=\floor{(n-s_n+1)/r_n^*}\sim m_n$. Moreover, the proof of Theorem \ref{cor: g besch bedingung sliding blocks} (cf.\ \eqref{eq:Deltan2ndmombound}) shows that for
$$ V_{n,1}^*(g) := \frac{1}{b_n(g)} \sum_{j=1}^{r_n^*} g(Y_{n,j}), \quad\text{and} \quad p_n^* := P\{\exists g\in\GG: V_{n,1}^*(g)\ne 0\},
$$
one has
\begin{align}
  E\big[(V_{n,1}^*(g)-V_{n,1}(g))^2\big] & = E\bigg[\Big(\frac{1}{b_n(g)}\sum_{j=r_n^*+1}^{r_n} g(Y_{n,j})\Big)^2\bigg] = \ord\Big(\frac{p_n}{m_n}\Big),\\
  |p_n^*-p_n| & \le s_n v_n.
\end{align}
Hence, if $p_n\asymp r_nv_n$ (which holds true for all known examples), $p_n^*\sim p_n$ and the conditions of Theorem 2.1 are still fulfilled if one replaces $r_n$ with $r_n^*$. One may argue similarly in the setting of Theorem 2.4.

We may thus assume w.l.o.g.\ that $r_n$ is a multiple of $s_n$, where the multiplicity depends on $n$. Note that $r_n/s_n$ must tend to $\infty$ if Theorem 2.1 shall be applied.
We then obtain
\begin{equation}
\label{eq:vergl disjoint stat con}
\sqrt{nv_n}\left(T_n^{d}(g)-E[T_n^{d}(g)]\right)\stackrel{w}{\to} \mathcal{N}(0,c^{(d)}),
\end{equation}
with
\begin{equation}  \label{eq:cddef}
c^{(d)}=\lim_{n\to\infty} \frac{1}{r_nv_na_n^2} Var\bigg(\sum_{i=1}^{r_n/s_n}g(Y_{n,is_n+1})\bigg).
\end{equation}
See the Supplement for details about the conditions under which \eqref{eq:vergle sliding stat con} and \eqref{eq:vergl disjoint stat con} hold. Alternatively, one could prove the asymptotic normality of $T_n^{d}(g)$ using Theorem 2.3 of \cite{drees2010} with $r_n$ replaced by $s_n$, but the above representation of the asymptotic variance $c^{(d)}$ simplifies the comparison with $c^{(s)}$. The following theorem shows that the asymptotic variance of the sliding blocks statistic is never greater than that of the disjoint blocks statistic.
\begin{theorem}
	\label{lemma:slidng vs disjoint zaehler}
	If conditions (A1), \eqref{eq:csdef} and \eqref{eq:cddef} hold, and  $r_n/s_n\in\mathbb{N}$ for all $n\in\mathbb{N}$, then $c^{(s)}\leq c^{(d)}$.
\end{theorem}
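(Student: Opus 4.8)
The plan is to reduce the asymptotic claim to a single non-asymptotic variance inequality, valid for each fixed $n$, and then pass to the limit using the assumed convergences \eqref{eq:csdef} and \eqref{eq:cddef}. Write $N:=r_n/s_n\in\N$. The starting positions $1,\dots,r_n$ of the sliding sum $\sum_{i=1}^{r_n}g(Y_{n,i})$ split into $s_n$ residue classes modulo $s_n$; for each offset $t\in\{0,\dots,s_n-1\}$ the corresponding class consists of exactly $N$ equally spaced positions $t+1,\,t+1+s_n,\,\dots,\,t+1+(N-1)s_n$, all lying in $\{1,\dots,r_n\}$ precisely because $r_n$ is a multiple of $s_n$. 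Hence, setting $D_n(t):=\sum_{i=0}^{N-1}g(Y_{n,t+1+is_n})$, one obtains the exact decomposition $\sum_{i=1}^{r_n}g(Y_{n,i})=\sum_{t=0}^{s_n-1}D_n(t)$.

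The first key observation is that each $D_n(t)$ is itself a disjoint blocks sum: consecutive starting positions differ by exactly $s_n$, so the blocks $Y_{n,t+1+is_n}$ do not overlap and merely tile a stretch of $r_n$ consecutive observations. By row-wise stationarity (A1), $D_n(t)$ has the same distribution, and in particular the same variance, as $D_n(0)$, which coincides in law with the disjoint blocks sum $\sum_{i=1}^{r_n/s_n}g(Y_{n,is_n+1})$ appearing in \eqref{eq:cddef} (the latter being the former shifted in time by $s_n$). Thus $Var(D_n(t))=Var\big(\sum_{i=1}^{r_n/s_n}g(Y_{n,is_n+1})\big)$ for every $t$.

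The second step is to control the variance of the sum of the $s_n$ dependent summands $D_n(t)$ by Minkowski's inequality in $L^2$. Writing $\tilde D_n(t):=D_n(t)-E[D_n(t)]$ for the centered versions, the triangle inequality gives $\big\|\sum_{t=0}^{s_n-1}\tilde D_n(t)\big\|_{2}\le\sum_{t=0}^{s_n-1}\big\|\tilde D_n(t)\big\|_{2}=s_n\big\|\tilde D_n(0)\big\|_{2}$, since all summands share the same $L^2$ norm. Squaring yields the non-asymptotic bound
\[
Var\Big(\sum_{i=1}^{r_n}g(Y_{n,i})\Big)\le s_n^2\,Var\Big(\sum_{i=1}^{r_n/s_n}g(Y_{n,is_n+1})\Big).
\]
Dividing both sides by $r_nv_ns_n^2a_n^2$ makes the left-hand side the pre-limit quantity defining $c^{(s)}$ and the right-hand side that defining $c^{(d)}$; letting $n\to\infty$ and invoking \eqref{eq:csdef} and \eqref{eq:cddef} gives $c^{(s)}\le c^{(d)}$.

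Since the inequality is purely algebraic once the decomposition is in place, I expect no serious analytic obstacle; the crux is rather conceptual, namely spotting that the sliding sum is an equally weighted superposition of $s_n$ shifted copies of the disjoint blocks sum, each with identical variance. The divisibility hypothesis $r_n/s_n\in\N$ is exactly what makes all $s_n$ residue classes contain the same number $N$ of blocks, so that every $D_n(t)$ matches the normalization in \eqref{eq:cddef}; without it one would incur boundary terms, which would have to be shown to be of lower order and absorbed in the limit.
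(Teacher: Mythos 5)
Your proof is correct, and it takes a genuinely different route from the paper's. You decompose the sliding sum exactly into the $s_n$ residue classes modulo $s_n$, observe that by (A1) each class sum $D_n(t)$ is a time shift of the disjoint-blocks sum in \eqref{eq:cddef} (the hypothesis $r_n/s_n\in\N$ guaranteeing that every class consists of exactly $r_n/s_n$ non-overlapping blocks), and conclude via the triangle inequality in $L^2$ that $Var\big(\sum_{i=1}^{r_n}g(Y_{n,i})\big)\le s_n^2\,Var\big(\sum_{i=1}^{r_n/s_n}g(Y_{n,is_n+1})\big)$; dividing by $r_nv_ns_n^2a_n^2$ and letting $n\to\infty$ then gives $c^{(s)}\le c^{(d)}$. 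The paper proceeds differently: it expands both pre-asymptotic variances into lag-$k$ cross moments $E[g(Y_{n,0})g(Y_{n,k})]$, identifies their difference as a weighted sum with weights $\gamma_n(k)$ equal to $1-1/s_n$ when $k$ mod $s_n=0$ and $-1/s_n$ otherwise, and, adapting an idea of \cite{zou2019}, realizes $\gamma_n$ as the covariance function of an auxiliary sequence $\phi_{n,k}$ built from a uniform random offset $U_n$ independent of the data, so that the difference equals $r_n^{-1}E\big[\big(\sum_{j=1}^{r_n}\phi_{n,j}g(Y_{n,j})\big)^2\big]\ge 0$. Your route is more elementary (stationarity plus Minkowski, no auxiliary randomization, no expansion into lag terms) and it makes the role of the divisibility assumption completely transparent. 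What the paper's randomization buys in exchange is an exact representation of the pre-asymptotic gap as a second moment rather than a one-sided bound, which is convenient when one wants to track the difference $c^{(d)}-c^{(s)}$ precisely, for instance in the multivariate (Loewner-order) extension mentioned after the theorem. One small point you should state explicitly: Minkowski requires $Var\big(\sum_{i=1}^{r_n/s_n}g(Y_{n,is_n+1})\big)<\infty$, which is guaranteed for all sufficiently large $n$ by the assumed convergence \eqref{eq:cddef}, and that suffices since only large $n$ matters for the limits.
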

Indeed, one can even prove a multivariate version of this theorem: under suitable conditions the asymptotic covariance matrix of a vector of sliding blocks statistics $\big(T_n^{s}(g_i))_{1\le i\le I}$ is smaller w.r.t.\ the Loewner order than the corresponding matrix of the disjoint blocks statistics (see Supplement).

Usually, the probability $v_n$ that a single observation $X_{n,1}$ does not vanish is unknown, whereas the normalizing constant $a_n$ may depend on $g$, but not on the unknown distribution of $X_{n,1}$. In what follows, we thus analyze versions of our statistics where $v_n$ is replaced with a simple empirical estimator. This results in the estimators
\begin{align}
\label{eq:vergl sliding est}
\tilde T_n^s(g) & := \frac{nv_n T_n^s(g)}{\sum_{i=1}^{n-s_n+1}\mathds{1}_{\{X_{n,i}\neq 0\}}} = \frac{\frac{1}{s_na_n}\sum_{i=1}^{n-s_n+1}g(Y_{n,i})}{\sum_{i=1}^{n-s_n+1}\mathds{1}_{\{X_{n,i}\neq 0\}}},\\
\tilde T_n^d(g) & := \frac{nv_n T_n^d(g)}{\sum_{i=1}^{n-s_n+1}\mathds{1}_{\{X_{n,i}\neq 0\}}} =\frac{\frac{1}{a_n}\sum_{i=1}^{\lfloor n/s_n\rfloor}g(Y_{n,(i-1)s_n+1})}{\sum_{i=1}^{n-s_n+1}\mathds{1}_{\{X_{n,i}\neq 0\}}}
\end{align}
of $\xi$.
In order to prove convergence of these estimators, one needs the joint convergence of the numerator and denominator. This can again be concluded from Theorem \ref{cor: g besch bedingung sliding blocks} or Theorem \ref{satz: emp prozess fidi}, respectively, now applied with $\GG=\{g,h\}$ and $h(x_1,...,x_{s})=\mathds{1}_{\{x_{1}\neq 0\}}$.
Similarly as before, one obtains
\begin{equation}
\label{eq:vergl est con}
\sqrt{nv_n}\begin{pmatrix}
       T_n^\sharp(g)-E[T_n^\sharp(g)]\\
\frac{1}{nv_n}\sum_{i=1}^{n-s_n+1}\big(\mathds{1}_{\{X_{n,i}\neq 0\}}-v_n\big) \\
\end{pmatrix}
\xrightarrow{w}  \mathcal{N}_2\left(0,\begin{pmatrix}	c^{(\sharp)}  & c^{(\sharp,v)} \\
c^{(\sharp,v)}  & c^{(v)}
\end{pmatrix}\right).
\end{equation}
where $\sharp$ stands either for $d$ or $s$
and
\begin{align}
c^{(s,v)}&:=\lim_{n\to\infty}\frac{1}{r_nv_ns_na_n}Cov\bigg(\sum_{i=1}^{r_n}g(Y_{n,i}),\sum_{i=1}^{r_n}\mathds{1}_{\{X_{n,i}\neq 0\}}\bigg),\\
c^{(d,v)}&:=\lim_{n\to\infty} \frac{1}{r_nv_na_n}Cov\bigg(\sum_{j=1}^{r_n/s_n}g(Y_{n,(j-1)s_n+1}),\sum_{i=1}^{r_n}\mathds{1}_{\{X_{n,i}\neq 0\}}\bigg),\\
c^{(v)}&:=\lim_{n\to\infty}\frac{1}{r_nv_n} E\bigg[\bigg(\sum_{i=1}^{r_n} \mathds{1}_{\{X_{n,i}\neq 0\}}\bigg)^2  \bigg].
\end{align}
Note that the same result holds if $\sum_{i=1}^{n-s_n+1}$ is replaced with $\sum_{i=1}^n$ (cf. \eqref{eq:sliding fidi con wie ursprung proz}).

By some standard continuous mapping argument (see Supplement), one may conclude
\begin{align}
\label{eq:vergl disjoint est con frac}
\sqrt{nv_n}\big( \tilde T_n^{\sharp} - \xi\big)\stackrel{w}{\to} \mathcal{N}(0,\tilde{c}^{(\sharp)})
\end{align}
with $\tilde{c}^{(\sharp)}:=c^{(\sharp)}+\xi^2 c^{(v)}-2\xi c^{(\sharp,v)}$, provided the bias of the estimator is negligible, that is
 $E[g(Y_{n})]/s_nv_na_n-\xi=\ord\big((nv_n)^{-1/2}\big)$.

It turns out that under rather mild conditions again the asymptotic variance of the estimator using sliding blocks is not greater than that of the disjoint blocks estimator, if the function $g$ has constant sign.
\begin{theorem}
	\label{satz:sliding vs disjoint ganzer schaetzer}
	Suppose the conditions of Theorem  \ref{lemma:slidng vs disjoint zaehler} are satisfied, \eqref{eq:slidingexpectconv} holds,
	the function $g$ is bounded and does not change its sign, $s_n=\ord(r_n a_n)$ and $s_nv_n\to 0$.
	If, in addition, there exists a sequence $k_n=\ord(r_na_n)$ of natural numbers such that the $\beta$-mixing coefficients defined in \eqref{eq:betaXdef} satisfy $\sum_{i=k_n}^{r_n}\beta_{n,i}^X=\ord(r_nv_na_n)$, then $\tilde{c}^{(s)}\leq \tilde{c}^{(d)}$.	
\end{theorem}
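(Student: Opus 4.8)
The plan is to reduce the assertion to the numerator inequality $c^{(s)}\le c^{(d)}$ already supplied by Theorem~\ref{lemma:slidng vs disjoint zaehler}, by showing that the two cross-covariance terms have the same limit, $c^{(s,v)}=c^{(d,v)}$. Since $\tilde c^{(\sharp)}=c^{(\sharp)}+\xi^2c^{(v)}-2\xi c^{(\sharp,v)}$ and the variance $c^{(v)}$ of the exceedance count is identical for both estimators (the count $\sum_i\ind{\{X_{n,i}\neq0\}}$ runs over all positions in both $\tilde T_n^s$ and $\tilde T_n^d$), one has
\[
\tilde c^{(d)}-\tilde c^{(s)}=\big(c^{(d)}-c^{(s)}\big)-2\xi\big(c^{(d,v)}-c^{(s,v)}\big).
\]
Thus, once $c^{(s,v)}=c^{(d,v)}$ is established, the sign of $\xi$ becomes irrelevant and the claim follows from $c^{(d)}\ge c^{(s)}$.

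To prove $c^{(s,v)}=c^{(d,v)}$ I would expand both covariances over time lags. With $\phi_n(m):=Cov\big(g(Y_{n,1}),\ind{\{X_{n,1+m}\neq0\}}\big)$, stationarity gives the sliding covariance $\sum_{|m|<r_n}(r_n-|m|)\phi_n(m)$ and the disjoint covariance $\sum_m N_{n,m}\phi_n(m)$, where $N_{n,m}$ is the number of pairs consisting of a disjoint block $Y_{n,(j-1)s_n+1}$, $1\le j\le r_n/s_n$, and an exceedance index $k\le r_n$ at lag $m$. The decisive structural observation is that, because the count ranges over all positions, every single block contributes asymptotically the same covariance $\sum_m\phi_n(m)$ with the count; consequently, after dividing by $r_nv_ns_na_n$ resp.\ $r_nv_na_n$, the two expressions carry the same per-lag weight up to a boundary discrepancy, namely $|(r_n-|m|)-s_nN_{n,m}|=\Ord(s_n)$ uniformly in $m$. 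This yields
\[
\big|c^{(d,v)}-c^{(s,v)}\big|\le\limsup_{n\to\infty}\frac{\Ord(1)}{r_nv_na_n}\sum_{|m|<r_n}|\phi_n(m)|,
\]
so it remains to show $\sum_{|m|<r_n}|\phi_n(m)|=\ord(r_nv_na_n)$.

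For this last bound I would split the lags at $k_n+s_n$. For the near lags $|m|\le k_n+s_n$, the elementary estimate $|\phi_n(m)|\le\|g\|_\infty\big(P\{g(Y_{n,1})\neq0,\,X_{n,1+m}\neq0\}+s_nv_n^2\big)\le Cv_n$, valid because $g$ is bounded and vanishes on the null vector, together with $k_n=\ord(r_na_n)$ and $s_nv_n\to0$, bounds their contribution by $\Ord\big((k_n+s_n)v_n\big)=\ord(r_nv_na_n)$. For the far lags $|m|>k_n+s_n$, the block $Y_{n,1}$ and the observation $X_{n,1+m}$ are separated by a gap larger than $k_n$, so the $\beta$-mixing covariance inequality gives $|\phi_n(m)|\le C\beta_{n,|m|-s_n}^X$, and the hypothesis $\sum_{i=k_n}^{r_n}\beta_{n,i}^X=\ord(r_nv_na_n)$ controls their total contribution. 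The constant sign of $g$ enters in the near-lag estimate, allowing one to replace $g(Y_{n,1})$ by $\|g\|_\infty\ind{\{g(Y_{n,1})\neq0\}}$ and to bound the centering term through the exceedance probability $P\{g(Y_{n,1})\neq0\}\le s_nv_n$.

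I expect the main obstacle to be the far-lag regime. After normalization by $(r_nv_ns_na_n)^{-1}$ the contribution of an individual far lag is \emph{not} negligible, so one cannot discard the far part of either covariance separately; the argument must instead exploit that the sliding and disjoint weight profiles differ only by $\Ord(s_n)$ per lag and combine this with the summability provided by the mixing condition. Keeping the combinatorics of $N_{n,m}$ correct near the block boundaries (where the number of covering disjoint blocks decreases) and matching the two normalizations $s_nv_na_n$ and $v_na_n$ is the delicate bookkeeping; the conditions $s_n=\ord(r_na_n)$, $s_nv_n\to0$ and the tailored sequence $k_n=\ord(r_na_n)$ with $\sum_{i=k_n}^{r_n}\beta_{n,i}^X=\ord(r_nv_na_n)$ are calibrated precisely so that all these error terms vanish.
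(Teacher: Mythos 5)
Your proposal is correct, and it reaches the conclusion by a decomposition that differs from the paper's in an instructive way. The paper performs the same reduction to the cross terms $c^{(d,v)}$ versus $c^{(s,v)}$ and the same near/far split at lag $\approx k_n+s_n$ with the same two ingredients (a trivial $\Ord(v_n)$ bound per near lag, the $\beta$-mixing covariance inequality for far lags), but its bookkeeping is asymmetric: by stationarity it anchors the $g$-block at position $1$, so that the sliding and disjoint cross-covariances differ only in the window of exceedance indicators attached to each summand, and then, using $g\ge 0$ to discard the left-boundary terms, it bounds the difference one-sidedly, obtaining only $c^{(d,v)}\le c^{(s,v)}$; this suffices because $g\ge 0$ also forces $\xi\ge 0$. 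Your lag-coordinate expansion with weight profiles $r_n-|m|$ versus $s_nN_{n,m}$, which indeed differ by $\Ord(s_n)$ uniformly in $m$ (an elementary count, using that $r_n/s_n\in\N$), bounds $|c^{(d,v)}-c^{(s,v)}|$ symmetrically and yields the equality $c^{(d,v)}=c^{(s,v)}$, i.e.\ $\tilde c^{(d)}-\tilde c^{(s)}=c^{(d)}-c^{(s)}$, which is exactly the refinement that the paper only asserts in the remark following the theorem and in a parenthetical at the end of its proof. A further payoff you did not notice: your route makes the constant-sign hypothesis superfluous, since your near-lag estimate works for any bounded $g$ via
\begin{equation}
|\phi_n(m)|\le E\big[|g(Y_{n,1})|\ind{\{X_{n,1+m}\ne 0\}}\big]+E\big[|g(Y_{n,1})|\big]v_n\le \|g\|_\infty\big(v_n+s_nv_n^2\big),
\end{equation}
and Theorem \ref{lemma:slidng vs disjoint zaehler}, condition \eqref{eq:slidingexpectconv} and your two-sided cross-term argument are all sign-free, so the sign of $\xi$ never enters; the sign assumption is genuinely needed only for the paper's one-sided window-dropping step. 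Finally, your closing remark is on target: individual far lags are not negligible after normalization, which is precisely why both proofs must compare the two covariances lag by lag (or window by window) before invoking the summability $\sum_{i=k_n}^{r_n}\beta_{n,i}^X=\ord(r_nv_na_n)$.
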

In fact, it can be shown that $\tilde{c}^{(d)}-\tilde{c}^{(s)}={c}^{(d)}-{c}^{(s)}$. In the most common case that the mixing coefficients decrease exponentially fast and $\log n=\ord(r_na_n)$, the sequence $k_n=\floor{c\log n}$ with sufficiently large constant $c>0$ fulfills the conditions of Theorem \ref{satz:sliding vs disjoint ganzer schaetzer}.

\section{Estimating the extremal index}
\label{section:extremal.index}
In this section we  apply the general theory presented in Section \ref{section:sliding} and Appendix  \ref{section:abstract} to analyze the asymptotic behavior  of three estimators for the extremal index of a real-valued stationary time series $(X_t)_{t\in\Z}$. If for all thresholds $u_n(\tau)$ such that $nP\{X_0>u_n(\tau)\}\to\tau$ for some $\tau>0$ one has
\begin{equation}
\lim_{n\to\infty}P\Big\{\max_{1\leq i\leq n}X_i\leq u_n(\tau)\Big\} = e^{-\theta \tau},
\end{equation}
then $\theta$ is said to be the \textit{extremal index} of the time series (\cite{leadbetter1983}). The extremal index always lies in $[0,1]$. In what follows, we exclude the degenerate case $\theta=0$ and assume $\theta>0$.

The estimation of this extremal index has been much discussed  in the literature, see e.g. \cite{smith1994}, \cite{ferro2003}, \cite{suveges2007}, \cite{robert2009}, \cite{berghaus2018}, among others. We examine two of the most popular estimators, the blocks and the runs estimator, and a variant of the former. Throughout this section, we use the notation $M_{i,j}:=\max(X_i,...,X_j)$.

If the extremal index exists then, under weak additional conditions,
\begin{equation}
\label{eq:maximum zu einzelueberschreitung}
\frac{P\{M_{1,k_n}>u_n\}}{k_nP\{X_1>u_n\}}\rightarrow \theta
\end{equation}
for sequences $k_n\to\infty$ and $u_n$ such that $k_nP\{X_1>u_n\}\to 0$. In particular, this holds if $\beta^X_{n,l_n}/(k_nv_n)\to 0$ for some $l_n=\ord(k_n)$ (cf.\ \cite{leadbetter1983}, Theorem 3.4).
If one replaces the unknown probabilities by empirical ones, using disjoint blocks to estimate the numerator for $k_n=s_n$, one arrives at the following  estimator proposed by \cite{hsing1991}:
\begin{equation}
\label{def:extremal index disjoint}
\hat{\theta}_n^d:= \frac{\sum_{i=1}^{\lfloor n/s_n\rfloor}\mathds{1}_{\{M_{(i-1)s_n+1,is_n}> u_n\}}}{\sum_{i=1}^{n-s_n+1}\mathds{1}_{\{X_i>u_n\}}}.
\end{equation}
He proved asymptotic normality of this blocks  estimator under some tailor-made conditions.
As suggested in Section 10.3.4 of \cite{beirlant2004}, alternatively one may use sliding blocks, which leads to
\begin{equation}
\label{def:extremal index sliding}
\hat{\theta}_n^s:= \frac{\frac{1}{s_n}\sum_{i=1}^{n-s_n+1}\mathds{1}_{\{M_{i,i+s_n-1}> u_n\}}}{\sum_{i=1}^{n-s_n+1}\mathds{1}_{\{X_i>u_n\}}}.
\end{equation}

The so-called runs estimator of $\theta$ is based on the following characterization of the extremal index:
\begin{equation}
\label{eq:runs char extremal}
P(M_{2,k_n}\leq u_n|X_1>u_n)\rightarrow \theta,
\end{equation}
which was first proven by \cite{obrien1987} under suitable conditions.
Again, by replacing the unknown probabilities for $k_n=s_n$ by empirical counterparts, one arrives at
\begin{equation}
\label{def:extremal index runs}
\hat{\theta}_n^r:=\frac{\sum_{i=1}^{n-s_n+1}\mathds{1}_{\{X_i>u_n,M_{i+1,i+s_n-1}\leq u_n\}}}{\sum_{i=1}^{n-s_n+1}\mathds{1}_{\{X_i>u_n\}}}.
\end{equation}
This runs estimator was suggested by \cite{hsing1993}. Its asymptotic normality was first established in \cite{weissman1998} who also proved the asymptotic normality of $\hat\theta_n^d$ under somewhat simpler conditions than \cite{hsing1991}. For a very specific model, \cite{weissman1998} showed that the asymptotic variances of both estimators are the same, but they did not realize that this is indeed true under quite general structural assumptions, as we will show below.

To establish asymptotic normality of these estimators, we need the following conditions:
\begin{itemize}
	\item[\bf($\theta$1)] For $v_n:=P\{X_1>u_n\}\to 0$, one has $nv_n\to \infty$ and $s_n\to\infty$. In addition, there exists a sequence $(r_n)_{n\in\N}$ such that $s_n=\ord(r_n)$, $r_nv_n\to 0$, $r_n=\ord(\sqrt{nv_n})$ and  $(n/r_n)\beta^X_{n,s_n-1}\to 0$.
	
	\item[\bf($\theta$2)]
	$\displaystyle c:=\lim_{n\to\infty}\frac{1}{r_nv_n} E\bigg[\bigg(\sum_{j=1}^{r_n} \mathds{1}_{\{X_{j}>u_n\}}\bigg)^2  \bigg]$ exists in $[0,\infty)$.
	
	\item[\bf($\theta$P)] For  all $n\in\mathbb{N}$ and $k\in\mathbb{N} $ there exists $e_n(k)$ such that
	\begin{equation}
	e_n(k)\geq P(X_k>u_n|X_0>u_n)
	\end{equation}
	and $\lim_{n\to\infty}\sum_{k=1}^{r_n}e_n(k)=\sum_{k=1}^{\infty}\lim_{n\to\infty}e_n(k) <\infty$.
\end{itemize}

By Pratt's lemma (\cite{pratt1960}), condition ($\theta$P) enables us to exchange sums and limits in the calculation of variance and covariance. Moreover, under ($\theta$1) and ($\theta$P), both \eqref{eq:maximum zu einzelueberschreitung} and \eqref{eq:runs char extremal} hold for all $k_n\le r_n$ such that $k_n\to \infty$. This follows from Theorem 1 and Corollary 2 of \cite{segers2003} in combination with the aforementioned result on convergence~\eqref{eq:maximum zu einzelueberschreitung}.

The limit $c$ is the asymptotic variance of the estimator for $v_n=P\{X_i>u_n\}$. If ($\theta$P) holds and the positive part $(X_t^+)_{t\in\mathbb{Z}}$ of the time series  is regular varying, then $c$ can be represented in terms of its tail process $(W_t)_{t\in\mathbb{Z}}$ (see Supplement), i.e.\ ($\theta$2) holds with
\begin{align}
c
& = 1+ \lim_{n\to\infty} \sum_{k=1}^{r_n-1}\Big(1-\frac{k}{r_n}\Big)\big(P(X_{k}>u_n|X_0>u_n)+P(X_{0}>u_n|X_{-k}>u_n)\big)\\
&=1+2\sum_{k=1}^\infty P\{W_{k}>1\}\label{eq:example calculation c for reg var}.
\end{align}
Alternatively, one may use the representation $c=\sum_{k\in\mathbb{Z}} P\{W_{k}>1\}$.

In addition, we have to assume that convergence \eqref{eq:maximum zu einzelueberschreitung} for $k_n=s_n$ and convergence  \eqref{eq:runs char extremal}, respectively, is sufficiently fast to ensure that the bias of the block based estimators or runs estimators, respectively, is asymptotically negligible:
\begin{itemize}
	\item[\bf{(B$_b$)}]
	$ \displaystyle
	\frac{P\{M_{1,s_n}> u_n\}}{s_nv_n}  - \theta = \ord\big((nv_n)^{-1/2}\big).
	$
	\item[\bf{(B$_r$)}]
	$ \displaystyle
	P(M_{2,s_n}\leq u_n|X_1>u_n)  - \theta\ = \ord\big((nv_n)^{-1/2}\big).
	$
\end{itemize}

The following result shows that under our conditions all three estimator have the same limit distribution.
\begin{theorem}
	\label{thm:extremal index}
	If the conditions ($\theta$1), ($\theta$2) and ($\theta$P) are satisfied, then
	\begin{equation}
	\sqrt{nv_n}(\hat{\theta}_n^\sharp-\theta)\xrightarrow{w} \mathcal{N}(0, \theta(\theta c -1)),
	\end{equation}
	provided (B$_b$) holds when $\sharp$ stands for `d' or `s', and (B$_r$) holds when $\sharp$ stands for `r'.
\end{theorem}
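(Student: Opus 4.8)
The plan is to realize each of the three estimators as a ratio of a sliding blocks statistic (the numerator) to the common exceedance count (the denominator), and to read off its limit law from the joint central limit theorem for numerator and denominator prepared in Subsection~\ref{section:sliidng.vs.disjoint}. Writing $\eta_{n,j}:=\mathds{1}_{\{X_j>u_n\}}$ and taking $X_{n,i}:=X_i\mathds{1}_{\{X_i>u_n\}}$, all three numerators are sums of $g(Y_{n,i})$ for the bounded functions $g_1(x_1,\dots,x_s)=\mathds{1}_{\{\max_i x_i>1\}}$ (for both blocks estimators) and $g_r(x_1,\dots,x_s)=\mathds{1}_{\{x_1>1,\ \max_{2\le i\le s}x_i\le 1\}}$ (runs), while the denominator corresponds to $h(x_1,\dots,x_s)=\mathds{1}_{\{x_1>1\}}$. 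Setting $p_n\asymp r_nv_n$, $l_n:=2s_n$ and $a_n\equiv 1$ for the blocks numerators and the denominator resp.\ $a_n\equiv 1/s_n$ for the runs numerator, I would first check that ($\theta$1), ($\theta$2) and ($\theta$P) imply the hypotheses of Theorem~\ref{cor: g besch bedingung sliding blocks} for the finite class $\GG=\{g_\sharp,h\}$: (A1) and (D0) are immediate (the class is finite), (A2) reduces to $s_n=\ord(r_n)$, $r_n=\ord(n)$ and $r_n=\ord(\sqrt{nv_n})$, (MX) is exactly $(n/r_n)\beta^X_{n,s_n-1}\to 0$ since $l_n-s_n=s_n-1$, the moment bound \eqref{eq: bed g besch anzahl ungleich 0} follows from criterion (S) together with ($\theta$P), and (C) (existence of the limiting covariances) follows from ($\theta$2) and ($\theta$P) via Pratt's lemma.

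Second, because $\GG$ is finite, Theorem~\ref{cor: g besch bedingung sliding blocks} already yields the joint convergence of numerator and denominator in the form \eqref{eq:vergl est con}. A continuous mapping / delta-method argument for the ratio, as in \eqref{eq:vergl disjoint est con frac} and using that the normalized denominator $\tfrac1{nv_n}\sum_i\eta_{n,i}\to 1$ in probability, then gives
\begin{equation*}
\sqrt{nv_n}\bigl(\hat{\theta}_n^\sharp-\theta\bigr)\xrightarrow{w}\mathcal{N}\bigl(0,\ \tilde c^{(\sharp)}\bigr),\qquad \tilde c^{(\sharp)}=c^{(\sharp)}+\theta^2 c^{(v)}-2\theta\,c^{(\sharp,v)},
\end{equation*}
provided the centering may be replaced by $\theta$; this is exactly what (B$_b$) resp.\ (B$_r$) guarantee, as they force $\sqrt{nv_n}\,(E[\text{numerator}]/(nv_n)-\theta)\to 0$. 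Here $c^{(v)}=c$ is precisely the constant of ($\theta$2).

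The heart of the proof is therefore the evaluation of $c^{(\sharp)}$ and $c^{(\sharp,v)}$, and the (at first sight surprising) claim is that neither depends on $\sharp$: one has $c^{(\sharp)}=\theta$ and $c^{(\sharp,v)}=1$ throughout, whence $\tilde c^{(\sharp)}=\theta+\theta^2 c-2\theta=\theta(\theta c-1)$ in every case. For the variances I would compute $\mathrm{Var}\bigl(\sum_i g_\sharp(Y_{n,i})\bigr)$ term by term. In the runs case two run-end events at lag $1\le k\le s_n-1$ are incompatible and those at lag $\ge s_n$ are negligible by ($\theta$P), leaving only the diagonal, which gives $\theta$. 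In the sliding case, writing $U_i:=\mathds{1}_{\{M_{i,i+s_n-1}>u_n\}}$, the overlap of the windows yields $\mathrm{Cov}(U_i,U_{i+m})\sim\theta(s_n-|m|)v_n$ for $|m|<s_n$ (using that $P\{M_{1,\ell}>u_n\}/(\ell v_n)\to\theta$ for every $s_n\le\ell\le 2s_n$, valid by the remark following ($\theta$P)); summing the triangular weights, $\sum_m(s_n-|m|)=s_n^2$, gives $c^{(s)}=\theta$, the product and lower-order terms vanishing since $s_nv_n\to 0$. An analogous inclusion–exclusion for adjacent disjoint blocks gives $c^{(d)}=\theta$.

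For the covariances $c^{(\sharp,v)}$ the common mechanism is an exceedance-accounting identity. In the sliding case each exceedance at a position $j$ lies in exactly $s_n$ windows, and on each $\eta_{n,j}=1$ forces $U_i=1$; this produces a diagonal contribution $r_ns_nv_n$ which, normalized by $r_nv_ns_n$, equals $1$. The disjoint case is identical with ``the unique block containing $j$'' in place of the $s_n$ windows, while for runs one uses that every exceedance is attached to a unique run-end weakly to its right, so that $\sum_i E[\mathds{1}_{A_i}\cdot(\text{size of the run ending at }i)]=E[\sum_j\eta_{n,j}]$ reconstitutes $1$ from the diagonal $\theta$ together with the same-run backward terms. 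The main obstacle, and the step I expect to require the most care, is to show that the remaining off-diagonal contributions—where the exceedance at $j$ is correlated with, but not counted by, a window/block/run-end—are asymptotically negligible. For the sliding estimator this reduces to bounding the total by $\tfrac{2}{s_n}\sum_{k\le s_n}k\,q^{(n)}_k+2\sum_{k>s_n}q^{(n)}_k$ with $q^{(n)}_k:=P\{X_k>u_n\mid X_0>u_n\}$, and showing both tend to $0$: the first because ($\theta$P) renders $\sum_k q^{(n)}_k$ a convergent dominated series, so its Cesàro-type weighting $\tfrac1{s_n}\sum_{k\le s_n}k\,q^{(n)}_k$ vanishes, and the second by the tail of the same series. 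The analogous cross-block and cross-run estimates, controlled by ($\theta$P) and the mixing condition in ($\theta$1), settle the other two cases and establish $c^{(\sharp,v)}=1$, hence the common limit variance $\theta(\theta c-1)$.
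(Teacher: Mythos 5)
Your treatment of the sliding blocks and runs estimators follows essentially the paper's own route: apply Theorem \ref{cor: g besch bedingung sliding blocks} to the two-element classes $\{g_1,h\}$ and $\{g_r,h\}$, identify the asymptotic covariance structure, and finish by a continuous-mapping argument in which (B$_b$) resp.\ (B$_r$) make the centering by $\theta$ legitimate. Your limiting values $c^{(\sharp)}=\theta$, $c^{(\sharp,v)}=1$, $c^{(v)}=c$ agree with Lemma \ref{lemma:covariance.extremal} of the paper, and your inclusion--exclusion computation of the sliding variance (the union of two windows at lag $m<s_n$ is a window of length $s_n+m$, whence $E[U_iU_{i+m}]\approx\theta(s_n-m)v_n$) is a legitimate, arguably cleaner alternative to the paper's decomposition according to the last exceedance; it is justified because, as remarked after ($\theta$P), convergence \eqref{eq:maximum zu einzelueberschreitung} holds for \emph{every} sequence $k_n\le r_n$ tending to $\infty$, which yields the uniformity in $\ell\in[s_n,2s_n]$ that your argument implicitly needs. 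The negligibility of the off-diagonal terms via ($\theta$P) and Pratt's lemma is also the same mechanism as in the paper (cf.\ \eqref{eq:summaxbound1}, \eqref{eq:III_to_0}).

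The genuine gap concerns the disjoint blocks estimator. Its numerator $\sum_{i=1}^{\floor{n/s_n}}\mathds{1}_{\{M_{(i-1)s_n+1,is_n}>u_n\}}$ is \emph{not} of the form $\sum_{i=1}^{n-s_n+1}g(Y_{n,i})$ for any functional $g$: a sliding sum weights every window equally, whereas the disjoint sum selects only every $s_n$-th window, and a function applied uniformly to all windows cannot ``know'' the index $i$ modulo $s_n$. Hence Theorem \ref{cor: g besch bedingung sliding blocks}, which you invoke with the same $g_1$ for ``both blocks estimators'', does not apply when $\sharp=d$. The paper instead treats this case with the abstract Theorem \ref{satz: emp prozess fidi}, taking $V_{n,i}^d$ to be the sum of the $r_n/s_n$ disjoint block indicators inside the $i$-th big block (Proposition \ref{prop:extremal index disjoint zaehler und nenner}); equivalently, the $\sharp=d$ branch of \eqref{eq:vergl est con} that you cite itself rests on Theorem \ref{satz: emp prozess fidi}, not on Theorem \ref{cor: g besch bedingung sliding blocks}. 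The repair stays entirely within your strategy---the limits you compute for the disjoint case ($c^{(d)}=\theta$, $c^{(d,v)}=1$) are correct, and the reduction to $r_n/s_n\in\N$ needed to set up the big blocks is supplied at the start of the paper's proofs for Section \ref{section:extremal.index}---but as written your first step fails for one of the three estimators the theorem covers.
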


In practice, usually the threshold $u_n$ is replaced with some data driven choice $\hat u_n$, like an intermediate order statistic of the observed time series. By the techniques developed in \cite{knezevic2020}, one may prove that these versions of the  estimators of the extremal index asymptotically behave the same, provided $\hat u_n/u_n\stackrel{P}{\to} 1$ and the time series $(X_t^+)_{t\in\mathbb{Z}}$ is regular varying. To this end, the results about the convergence of the fidis are not sufficient any more, but the full process convergence is needed. The precise results and their proofs are given in the Supplement.

\appendix

\section{Functional limit theorems in an abstract setting}
\label{section:abstract}
In this section we prove abstract limit theorems for empirical processes which imply both the limit theorems \ref{cor: g besch bedingung sliding blocks}, \ref{cor: g besch bedingung sliding blocks process} and \ref{cor:g unbounded sliding block con} for statistics of sliding blocks and the limit theorems established by \cite{drees2010}.
As in Section \ref{section:sliding} we consider a triangular array $(X_{n,i})_{1\leq i\leq n, n\in\mathbb{N}}$ of row-wise stationary $E$-valued random variables. Fix sequences $r_n=\ord(n)$ and $s_n=\ord(r_n)$ of natural numbers. In what follows,
 $V_{n,i}(g)$ are real-valued random variables that are measurable w.r.t.\ $(X_{n,(i-1)r_n+1},...,X_{n,ir_n+s_n-1})$, for all $1\leq i\leq m_n$ and $g\in\mathcal{G}$, which are assumed to form a stationary sequence of processes. We are interested in the weak convergence of
\begin{equation}
\label{def:z.emp.process}
Z_n(g):=\frac{1}{\sqrt{p_n}}\sum_{i=1}^{m_n}\left(V_{n,i}(g)-E[V_{n,i}(g)]\right), \qquad g\in\mathcal{G},
\end{equation}
where $m_n:=\lfloor (n-s_n+1)/r_n \rfloor$ and $p_n:=P\{\exists g\in\mathcal{G}: V_{n}(g)\neq 0 \}\to 0$ is assumed.

The choice $V_{n,i}(g)=m_n^{-1/2}\sum_{j=1}^{r_n}g(X_{n,(i-1)r_n+j})$ leads to the generalized tail array sums examined in Section 3 of \cite{drees2010}. Sums of more general statistics of disjoint blocks can be analyzed using $V_{n,i}(g)=\sum_{j=0}^{r_n/s_n-1} g(Y_{n,(i-1)r_n+js_n+1})/d_n(g)$ for suitable normalizing sequences $d_n(g)$ (assuming that $r_n$ is a multiple of $s_n$), while the choice \eqref{eq:Vnidefsliding} yields sums of statistics of sliding blocks.

In an abstract version of the ``big blocks, small blocks'' approach, we approximate $V_{n,i}$ by stationary sequences of random processes $\tilde{V}_{n,i}$ that are asymptotically independent.
For example, $V_{n,i}(g)=m_n^{-1/2}\sum_{j=1}^{r_n}g(X_{n,(i-1)r_n+j})$ can be approximated by $\tilde{V}_{n,i}(g)=m_n^{-1/2}\sum_{j=1}^{r_n-l_n}$ $g(X_{n,(i-1)r_n+j})$ for a suitable sequence $l_n=\ord(r_n)$.

We now list the conditions used to establish convergence of the finite dimensional marginal distributions (fidis) of $Z_n$.
\begin{itemize}
	
	\item[\bf (A)] $(X_{n,i})_{1\leq i\leq n}$ is stationary for all $n\in\mathbb{N}$ and the sequences $s_n,r_n\in\mathbb{N}$ satisfy $s_n=\ord(r_n)$ and $r_n=\ord(n)$.

	\item[\bf(V)] For all $n\in\N$, $1\le i\le m_n=\floor{(n-s_n+1)/r_n}$, $V_{n,i}$ and $\tilde V_{n,i}$ are real valued processes indexed by $\GG$ that are measurable w.r.t.\ $(X_{n,(i-1)r_n+1},...,X_{n,ir_n+s_n-1})$, and $(V_{n,i},\tilde V_{n,i})_{1\le i\le m_n}$  is stationary.
	
	\item[\bf(M$\mathbf{\tilde{V}}$)] $m_n\beta_{n,0}^{\tilde{V}}\to 0$
	
	\item[\bf(MX$_k$)] $m_n\beta_{n,(k-1)r_n-s_n}^{X}\to 0$
	

	\item[\bf($\mathbf{\Delta}$)] $\Delta_n:=V_n-\tilde V_n$ satisfies

(i)
	$\displaystyle
	E\left[(\Delta_n(g)-E[\Delta_n(g)])^2\mathds{1}_{\left\{ |\Delta_n(g)-E[\Delta_n(g)]|\leq \sqrt{p_n}\right\}}\right]=\ord\left(p_n/m_n\right), \qquad \forall\, g\in\mathcal{G},
	$
\smallskip

	(ii)
	$\displaystyle
	P\left\{|\Delta_n(g)-E[\Delta_n(g)]|> \sqrt{p_n}\right\}=\ord\left(1/m_n\right), \qquad \forall\, g\in\mathcal{G}.
	$
	
	\item[\bf (L)]
	$\displaystyle
	E\left[(V_n(g)-E[V_n(g)])^2 \mathds{1}_{\left\{ |V_n(g)-E[V_n(g)]|>\epsilon\sqrt{p_n} \right\}}\right]=\ord\left(p_n/m_n\right), \quad  \forall\, g\in\mathcal{G}, \epsilon>0.
	$
\end{itemize}
In addition, Conditions (C) and (D0) stated in Section \ref{section:sliding} are needed.
Condition ($\Delta$) ensures that the approximation of $V_{n,i}$ by $\tilde V_{n,i}$ is sufficiently accurate. It is always fulfilled if
\begin{equation}
\label{eq:bed-c1-zweites.moment}
E\left[(\Delta_n(g))^2\right]=\ord\left(p_n/m_n\right), \qquad  \forall\, g\in\mathcal{G}.
\end{equation}
The mixing conditions (MX$_k$) and (M$\tilde V$) enable us to replace the summands by independent copies, while (C) and the Lindeberg condition (L) imply convergence of  the sum of independent copies of $V_n(g)$.

\begin{theorem}
	\label{satz: emp prozess fidi}
	 Suppose the conditions (A), (V), (M$\tilde{V}$), (MX$_k$) for some $k\in\N, k\ge 2$, ($\Delta$), (L), (D0) and (C) are satisfied. Then the fidis of the empirical process $(Z_n(g))_{g\in\mathcal{G}}$ converge weakly to the fidis of a Gaussian process with covariance function $c$.
\end{theorem}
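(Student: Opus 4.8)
The plan is to prove the one-dimensional central limit theorem for arbitrary linear combinations and then invoke the Cram\'er--Wold device. Fix finitely many $g_1,\dots,g_L\in\GG$ and reals $\lambda_1,\dots,\lambda_L$, and set
\[
W_{n,i}:=\sum_{l=1}^L\lambda_l\big(V_{n,i}(g_l)-E[V_{n,i}(g_l)]\big),\qquad \tilde W_{n,i}:=\sum_{l=1}^L\lambda_l\big(\tilde V_{n,i}(g_l)-E[\tilde V_{n,i}(g_l)]\big),
\]
so that $\sum_l\lambda_l Z_n(g_l)=p_n^{-1/2}\sum_{i=1}^{m_n}W_{n,i}$. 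By the bilinearity built into (C) the target is convergence to a centered normal law with variance $\sigma^2:=\sum_{l,l'}\lambda_l\lambda_{l'}c(g_l,g_{l'})$, the existence of this limit being guaranteed by applying (C) to $g=g_l,h=g_{l'}$. Measurability of all the suprema and indicator events involved is secured by (D0), working with outer probabilities where necessary.

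First I would replace $V_{n,i}$ by its truncated surrogate $\tilde V_{n,i}$, i.e.\ show that $p_n^{-1/2}\sum_{i=1}^{m_n}(W_{n,i}-\tilde W_{n,i})\xrightarrow{P^*}0$. It suffices to treat each coordinate $g=g_l$ separately, so the task reduces to proving that $p_n^{-1/2}\sum_{i=1}^{m_n}(\Delta_{n,i}(g)-E[\Delta_{n,i}(g)])$ is negligible, where $\Delta_{n,i}:=V_{n,i}-\tilde V_{n,i}$. Here I would split each centered increment at the level $\sqrt{p_n}$: on the event $\{|\Delta_n(g)-E[\Delta_n(g)]|\le\sqrt{p_n}\}$ condition ($\Delta$)(i) gives a per-term second moment of order $o(p_n/m_n)$, whose contribution to the variance of the normalized sum is controlled by $m_n\cdot o(p_n/m_n)/p_n=o(1)$ once the cross-covariances of the bounded, block-measurable terms are dominated using the mixing conditions (MX$_k$) and (M$\tilde V$); on the complementary event, ($\Delta$)(ii) together with a union bound over the $m_n$ blocks shows the large part vanishes with probability tending to one. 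Thus $p_n^{-1/2}\sum_i W_{n,i}$ and $p_n^{-1/2}\sum_i\tilde W_{n,i}$ have the same limit, and by the same truncation bound the moment quantities entering (L) and (C) transfer from $V_n$ to $\tilde V_n$.

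Next I would decouple the dependent blocks. Since $\tilde V_{n,i}$ are constructed to leave a gap between consecutive big blocks, the stationary sequence $(\tilde V_{n,i})_{1\le i\le m_n}$ is $\beta$-mixing with coefficient $\beta^{\tilde V}_{n,0}$, and a coupling argument (Berbee/Bradley-type) produces row-wise independent copies $\tilde V^{*}_{n,i}$ with the same marginal law such that the total coupling error is bounded by a multiple of $m_n\beta^{\tilde V}_{n,0}\to0$ by (M$\tilde V$); the auxiliary condition (MX$_k$) with $k\ge2$ is what lets the overlapping structure of the original $X$-blocks be absorbed into this gap. It then remains to apply the Lindeberg--Feller theorem to the independent triangular array $p_n^{-1/2}\big(\tilde W^{*}_{n,i}\big)_{1\le i\le m_n}$: the Lindeberg condition follows from (L) (after the transfer just described) because the truncation level $\epsilon\sqrt{p_n}$ matches the normalization, while the row variance $m_np_n^{-1}\operatorname{Var}(\tilde W_n)$ converges to $\sigma^2$ by (C) and bilinearity. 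Undoing the coupling and the $\Delta$-replacement yields $\sum_l\lambda_l Z_n(g_l)\xrightarrow{w}\mathcal N(0,\sigma^2)$, and Cram\'er--Wold finishes the proof.

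The main obstacle I expect is the interface between the three replacements rather than any single one in isolation: one must verify that the moment controls underpinning the Lindeberg condition (L) and the variance limit (C) genuinely survive the passage $V_{n,i}\rightsquigarrow\tilde V_{n,i}\rightsquigarrow\tilde V^{*}_{n,i}$, and that the cross-block covariances arising in the $\Delta$-negligibility estimate are correctly dominated by the combination of (MX$_k$) and (M$\tilde V$) at the specific gaps dictated by the overlap of length $s_n-1$ between adjacent big blocks. Getting the bookkeeping of these gaps consistent with the scaling $r_n=\ord(n)$, $s_n=\ord(r_n)$ in (A) is the delicate technical core; once it is in place, both the coupling error and the Lindeberg verification are routine.
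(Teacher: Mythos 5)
Your overall architecture (replace $V_{n,i}$ by $\tilde V_{n,i}$, decouple, apply Lindeberg--Feller, finish with Cram\'er--Wold) matches the paper's, but two of your concrete steps do not go through under the stated hypotheses. The first gap is in your negligibility argument for $p_n^{-1/2}\sum_{i}(\Delta_{n,i}(g)-E[\Delta_{n,i}(g)])$, which you base on dominating cross-covariances of the truncated terms by mixing coefficients. Adjacent blocks overlap in $s_n-1$ observations (block $i$ spans indices $(i-1)r_n+1$ to $ir_n+s_n-1$), so no mixing bound applies to lag-one covariances at all; and even after splitting into $k$ residue classes, so that within-class blocks are separated by at least $(k-1)r_n-s_n$ observations, the covariance inequality for bounded mixing variables gives only $|Cov|\le C\,p_n\,\beta^X_{n,(k-1)r_n-s_n}$ per pair, hence a total contribution to the normalized variance of order $m_n^2\,\beta^X_{n,(k-1)r_n-s_n}$. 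Condition (MX$_k$) controls $m_n\beta^X_{n,(k-1)r_n-s_n}\to 0$ only, so your bound is $m_n\cdot \ord(1)$, which diverges. The paper avoids any covariance computation across blocks: within each residue class it replaces the \emph{whole vector} of blocks by independent copies via Eberlein's total-variation inequality (this costs exactly $m_{n,k,i}\beta^X_{n,(k-1)r_n-s_n}\to 0$, i.e.\ precisely what (MX$_k$) provides), applies Petrov's degenerate convergence criterion for sums of \emph{independent} variables -- whose hypotheses are exactly ($\Delta$)(i) and (ii) -- and then transfers the resulting $\ord_P(1)$ statement back to the dependent blocks, which is legitimate because it is a purely distributional statement.

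The second gap is at the CLT stage: you apply Lindeberg--Feller to independent copies of $\tilde V_{n,i}$ and assert that the quantities in (L) and (C) ``transfer'' from $V_n$ to $\tilde V_n$. Under ($\Delta$) alone this transfer fails: ($\Delta$) controls only a truncated second moment and a tail probability of $\Delta_n(g)$, so $E\big[(\Delta_n(g)-E[\Delta_n(g)])^2\big]$ may be arbitrarily large or even infinite, and then neither $\frac{m_n}{p_n}Cov(\tilde V_n(g),\tilde V_n(h))\to c(g,h)$ nor the Lindeberg condition for $\tilde V_n$ can be deduced by Cauchy--Schwarz; that argument would require the strictly stronger condition \eqref{eq:bed-c1-zweites.moment}. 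The paper sidesteps this entirely: its chain (Lemma \ref{lemma:emp.prozess.fidi.unab}) is $Z_n\leftrightarrow\tilde Z_n\leftrightarrow\tilde Z_n^*\leftrightarrow Z_n^*$, where the last link again uses Petrov's criterion applied to independent copies of $\Delta_n$, and the Lindeberg--Feller theorem is then applied to $Z_n^*$, built from independent copies of $V_{n,i}$ itself, for which (L) and (C) hold verbatim -- no moment bound for $\tilde V_n$ or $\Delta_n$ is ever needed. Both of your gaps are repaired by this re-routing; as written, however, your proof does not close.
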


To conclude convergence of the processes $(Z_n(g))_{g\in\mathcal{G}}$, we have to show that they are  asymptotically tight or asymptotically equicontinuous. To this end, we need  (D1)--(D3) from Section~\ref{section:sliding} and the following conditions:
\begin{itemize}
	\item[\bf (B)] $E[|V_{n}(g)|^2]<\infty $ for all $g\in\GG$, and  $	V_n(\mathcal{G}):=\sup_{g\in\mathcal{G}}|V_n(g)|<\infty$.

	\item[\bf (L1)]
	$\displaystyle
	E^*\left[ V_n(\mathcal{G}) \mathds{1}_{\left\{V_n(\mathcal{G})>\epsilon\sqrt{p_n}\right\}}\right]=\ord(\sqrt{p_n}/m_n), \quad\forall\,\epsilon>0.
	$
\end{itemize}
Condition (L1) follows from  the following condition of Lindeberg type, that also implies (L) (see Supplement):
\begin{itemize}
	\item[\bf (L2)]
	$\displaystyle
	E^*\left[ (V_n(\mathcal{G}))^2 \mathds{1}_{\left\{V_n(\mathcal{G})>\epsilon\sqrt{p_n}\right\}}\right]=\ord(p_n/m_n), \quad
\forall\,\epsilon>0.
	$
\end{itemize}

\begin{theorem}
	\label{satz: asymp tight_equicon}
  \begin{itemize}
	 \item[(i)] If the conditions (A), (V), (MX$_k$) for some $k\in\N, k\ge 2$, (B), (L1), (D0), (D1) and (D2) are satisfied, then the processes $(Z_n(g))_{g\in\mathcal{G}}$ are asymptotically tight.
     \item[(ii)] If the conditions (A), (V), (MX$_k$) for some $k\in\N, k\ge 2$, (B), (L2), (D0), (D1) and (D3) are satisfied, then the processes $(Z_n(g))_{g\in\mathcal{G}}$ are asymptotically equicontinuous.
   \end{itemize}
   \smallskip

   Hence, the processes converge to a Gaussian process with covariance function $c$ if, in addition, the assumptions of Theorem \ref{satz: emp prozess fidi} are fulfilled.
\end{theorem}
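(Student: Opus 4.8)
The plan is to combine a ``big blocks'' decoupling step, based on the mixing condition (MX$_k$), with the empirical-process arguments for row-wise independent triangular arrays. By (V) the super-block process $V_{n,i}$ is measurable with respect to $(X_{n,(i-1)r_n+1},\dots,X_{n,ir_n+s_n-1})$, so along an arithmetic subsequence $I_j=\{j,j+k,j+2k,\dots\}$ two consecutive blocks are separated by a gap of at least $(k-1)r_n-s_n$ observations. Splitting $Z_n=\sum_{j=1}^{k}Z_n^{(j)}$ with $Z_n^{(j)}(g):=p_n^{-1/2}\sum_{i\in I_j}(V_{n,i}(g)-E[V_{n,i}(g)])$, Berbee's coupling lemma lets me replace, on each subsequence, the dependent blocks by independent copies of $V_{n,1}$ at a total-variation cost of at most $m_n\beta^X_{n,(k-1)r_n-s_n}\to0$ by (MX$_k$). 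Since $k$ is fixed, and both asymptotic tightness and asymptotic equicontinuity are preserved under finite sums (the semi-metric $\rho$ of (D1) being common to all subsequences), it suffices to prove the corresponding property for the independent-copy process $Z_n^\ast(g):=p_n^{-1/2}\sum_i(V_{n,i}^\ast(g)-E[V_{n,i}^\ast(g)])$, where the $V_{n,i}^\ast$ are the independent copies of $V_{n,1}$ appearing in (D3).

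It then remains to establish the modulus-of-continuity estimate for $Z_n^\ast$. Because $p_n\to0$, the summands are not a priori uniformly bounded, and this is where the Lindeberg-type conditions enter. Using the envelope $V_n(\GG)$ from (B) together with a Markov/maximal bound, (L1) for part (i) resp.\ (L2) for part (ii) yield $P\{\exists i: V_{n,i}^\ast(\GG)>\epsilon\sqrt{p_n}\}\to0$ for every fixed $\epsilon>0$; hence on an event of probability tending to one all normalized summands $p_n^{-1/2}V_{n,i}^\ast$ are bounded by $\epsilon$, so one may run the argument with bounded increments. For part (i) I would cover $\GG$ by $N_{[\cdot]}(\epsilon,\GG,L_2^n)$ brackets as in (D2) and control the supremum of $Z_n^\ast$ over each bracket by an Ossiander-type maximal inequality, the bracketing-entropy integral in (D2) making the bound small for small $\delta$. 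For part (ii) I would symmetrize with Rademacher multipliers and run a chaining argument conditionally on the independent copies, using a sub-Gaussian increment bound with respect to the random semi-metric $d_n$; the chaining integral is exactly $\int_0^\delta\sqrt{\log N(\epsilon,\GG,d_n)}\,d\epsilon$, which is controlled in probability by (D3). In both cases the base level of the chaining is handled by (D1): total boundedness of $\GG$ under $\rho$ together with $(m_n/p_n)E[(V_n(g)-V_n(h))^2]\to0$ uniformly over shrinking $\rho$-balls forces the increment variances of $Z_n^\ast$ to be uniformly small, closing the estimate.

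Finally, combining asymptotic tightness (part (i)) resp.\ asymptotic equicontinuity (part (ii)) with the convergence of the fidis from Theorem \ref{satz: emp prozess fidi} gives, by the standard characterization of weak convergence in $\ell^\infty(\GG)$, the convergence of $(Z_n(g))_{g\in\GG}$ to the Gaussian process with covariance function $c$; this is the concluding assertion of the theorem.

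The main obstacle I anticipate is the decoupling in the genuinely overlapping sliding-blocks situation. Since consecutive super-blocks $V_{n,i}$ and $V_{n,i+1}$ share their $s_n-1$ boundary observations, neighboring blocks cannot be coupled to independence directly; passing to the $k$ arithmetic subsequences with $k\ge2$ is precisely what manufactures the gaps of length $(k-1)r_n-s_n$ required for the $\beta$-mixing bound, and one must check that this thinning, the truncation from (L1)/(L2), and the replacement of $\rho$ by the random metric $d_n$ are mutually compatible and that all error terms stay uniform over the whole class $\GG$ rather than over finite subfamilies only. Keeping this uniformity in the triangular-array regime $p_n\to0$, where the limiting covariance is visible only after the $m_n/p_n$ rescaling, is the delicate point.
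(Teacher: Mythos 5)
Your proposal is correct and follows essentially the same route as the paper: split $Z_n$ into $k$ arithmetic-subsequence processes, use (MX$_k$) to replace each subsequence's blocks by independent copies at a total-variation cost $m_n\beta^X_{n,(k-1)r_n-s_n}\to 0$ (the paper invokes Eberlein's lemma where you invoke Berbee coupling, an immaterial difference), and then establish tightness resp.\ equicontinuity for the independent-copy processes. The only divergence is that where you sketch the Ossiander bracketing bound and the symmetrization-plus-chaining argument by hand, the paper simply cites Theorems 2.11.9 and 2.11.1 of \cite{vanderVaart1996}, whose hypotheses are exactly your conditions (B), (L1)/(L2), (D1) and (D2)/(D3), with (D0) settling the measurability requirement.
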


\section{Proofs}
\label{section:proofs}

\subsection{Proofs of Appendix \ref{section:abstract}}
 We first show that for the proof of convergence of the fidis it suffices to consider independent copies of $V_{n,i}$.

\begin{lemma}
	\label{lemma:emp.prozess.fidi.unab}
	
	Suppose the conditions (A), ($\Delta$), (M$\tilde{V}$) and  (MX$_k$) for some $k\in\N, k\ge 2$, are satisfied. Let
   \begin{equation}
	Z_n^*(g):=\frac{1}{\sqrt{p_n}}\sum_{i=1}^{m_n}\big(V_{n,i}^*(g)-E[V_{n,i}^*(g)]\big), \qquad  g\in\mathcal{G},
	\end{equation}
	where $V_{n,i}^*$ are independent copies of $V_{n,i}$, $1\leq i\leq m_n$.
Then the fidis of $(Z_n(g))_{g\in\mathcal{G}}$ converge weakly if and only if the fidis of $(Z_n^*(g))_{g\in\mathcal{G}}$ converge, and if so, the limits coincide.
\end{lemma}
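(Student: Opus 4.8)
The plan is to reduce everything to a comparison of characteristic functions and to exploit that $Z_n$ and $Z_n^*$ share the same one-block marginal law, so that any truncation bias I cannot control simply cancels. By the Cramér--Wold device together with Lévy's continuity theorem, the asserted equivalence follows once I show that for every finite family $g_1,\dots,g_d\in\GG$ and every $(t_1,\dots,t_d)\in\R^d$ the characteristic functions of the real linear combinations $S_n:=\sum_l t_l Z_n(g_l)$ and $S_n^*:=\sum_l t_l Z_n^*(g_l)$ satisfy $|Ee^{iS_n}-Ee^{iS_n^*}|\to0$: if this holds, weak convergence of the fidis of one process forces convergence of all these characteristic functions, hence of the others, to the same limit. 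Writing $\xi_{n,i}:=\sum_l t_l(V_{n,i}(g_l)-EV_{n,i}(g_l))$ and $\tilde\xi_{n,i}$ for the analogue built from $\tilde V_{n,i}$, I set $S_n=p_n^{-1/2}\sum_i\xi_{n,i}$, introduce the big-block surrogate $S_n':=p_n^{-1/2}\sum_i\tilde\xi_{n,i}$, and write $S_n^*,S_n'^*$ for the versions formed from the independent copies $V_{n,i}^*$ resp.\ $\tilde V_{n,i}^*$.

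The comparison runs along the chain $S_n\leadsto S_n'\leadsto S_n'^*\leadsto S_n^*$. The decoupling step $|Ee^{iS_n'}-Ee^{iS_n'^*}|\to0$ is the cleanest: writing $Ee^{iS_n'}=E\prod_i f_{n,i}(\tilde V_{n,i})$ with $|f_{n,i}|\equiv1$ and telescoping one block at a time, each increment is bounded by the $\beta$-mixing coefficient between $\tilde V_{n,i}$ and the remaining blocks, so the total error is at most $2m_n\beta_{n,0}^{\tilde V}\to0$ by (M$\tilde V$), while the fully telescoped product is exactly $Ee^{iS_n'^*}$.

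The two approximation steps are the delicate ones. Here $\eta_{n,i}:=\xi_{n,i}-\tilde\xi_{n,i}$ is a centered linear combination of the $\Delta_{n,i}(g_l)$, and ($\Delta$) controls it only partially: its truncation at level $\sqrt{p_n}$ has second moment $\ord(p_n/m_n)$ by (i), whereas the exceedance probability is $\ord(1/m_n)$ by (ii). On the event $G_n$ that no block exceeds the truncation level --- of probability $1-m_n\,\ord(1/m_n)=1-\ord(1)$ by (ii) --- I may replace $\eta_{n,i}$ by its centered truncation $\bar\eta_{n,i}-E\bar\eta_n$ plus the deterministic shift $\mu_n:=m_n p_n^{-1/2}E\bar\eta_n$. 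I do \emph{not} attempt to prove $\mu_n\to0$ (this seems impossible under ($\Delta$) alone, since the large part carries no moment bound); instead I keep $\mu_n$ as a common phase, because both $S_n$ and $S_n^*$ are approximated by $S_n'$ resp.\ $S_n'^*$ shifted by the \emph{same} $\mu_n$, so the factor $e^{-i\mu_n}$ cancels once the three bounds are assembled. It then remains to see that the centered truncated sums tend to $0$ in probability. In the independent case ($S_n^*$ vs.\ $S_n'^*$) this is immediate, the variance being $m_n p_n^{-1}\mathrm{Var}(\bar\eta_n)=\ord(1)$. In the dependent case I group the blocks by residue modulo $k$; within a residue class consecutive blocks lie $k$ apart and are thus separated by a gap of $(k-1)r_n-s_n$ observations, so their joint law decouples into independent copies with an error of at most $m_n\beta_{n,(k-1)r_n-s_n}^{X}\to0$ by (MX$_k$), after which the independent variance bound applies inside each of the $k$ (fixed) classes.

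The main obstacle is precisely this dependent approximation step, and it is a twofold difficulty. First, the truncation bias $\mu_n$ is genuinely uncontrollable, which forces the whole argument to be organized as a \emph{relative} comparison of $Z_n$ and $Z_n^*$ rather than an absolute convergence statement. Second, the decoupling of the dependent $\Delta$-blocks must be linear in the number of blocks $m_n$: a naive covariance bound would be quadratic in $m_n$ and would require summability of the mixing coefficients, whereas (MX$_k$) supplies smallness only at the single lag $(k-1)r_n-s_n$. The residue-class grouping is exactly what turns this single-lag hypothesis into a decoupling error that is linear in $m_n$ and hence negligible. Assembling the three bounds with the common phase $e^{\pm i\mu_n}$ then yields $|Ee^{iS_n}-Ee^{iS_n^*}|\to0$, and the Cramér--Wold/Lévy reduction completes the proof.
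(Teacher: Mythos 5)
Your proposal is correct, and its skeleton is the same as the paper's: the chain $Z_n\leadsto \tilde Z_n\leadsto \tilde Z_n^*\leadsto Z_n^*$, with (M$\tilde{V}$) decoupling the middle step (the paper cites Lemma 2 of \cite{eberlein1984} for a total-variation bound where you telescope characteristic functions --- the identical estimate), and the residue-classes-mod-$k$ device converting the single-lag hypothesis (MX$_k$) into a decoupling error linear in $m_n$, exactly as in the paper. The genuine difference is the treatment of the $\Delta$-sums, and here your version is actually the more careful one. The paper invokes Theorem 1 of Section IX.1 of \cite{petrov1975} to assert $\frac{1}{\sqrt{p_n}}\sum_j(\Delta_{n,j}^*(g)-E[\Delta_{n,j}^*(g)])=\ord_P(1)$ directly from ($\Delta$). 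But no theorem on independent summands can deliver this from ($\Delta$)(i)--(ii) alone: Petrov's degenerate convergence criterion yields convergence to $0$ only after recentering at \emph{truncated} means, and that recentering need not vanish. For instance, with independent blocks, $\tilde V_n\equiv 0$ and $\Delta_n=V_n=\sqrt{p_nm_n}\,\ind{A_n}$ where $P(A_n)=p_n=1/(m_n\log m_n)$, conditions (i), (ii) hold, yet the normalized centered sum of independent copies equals $-\sqrt{m_n}/\log m_n+\ord_P(1)$, which diverges. Your refusal to prove $\mu_n\to 0$, and your device of carrying $\mu_n$ as a phase \emph{common} to both sides (common precisely because the dependent blocks and their independent copies share the same marginal law), is exactly what a rigorous proof of the lemma under ($\Delta$) as stated requires; it is also why the argument must be organized as a relative comparison rather than as absolute negligibility claims. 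This is why the lemma itself is true even though the paper's intermediate step \eqref{eq: delta k unab kon 0} is not justified under ($\Delta$) alone; the paper's downstream results are unaffected, since all its applications verify the stronger condition $E[(\Delta_n(g))^2]=\ord(p_n/m_n)$, under which the truncated-mean drift does vanish by Cauchy--Schwarz.

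Two details to tighten in a full write-up, neither of which affects validity. First, ($\Delta$) is stated per function $g$, while you truncate the linear combination $\eta_{n,i}$: either truncate on the intersection of the coordinatewise events $\{|\Delta_{n,i}(g_l)-E[\Delta_{n,i}(g_l)]|\le\sqrt{p_n}\}$, or note that on $\{|\eta_{n,i}|\le\sqrt{p_n}\}$ one has $\eta_{n,i}^2\le p_n$ while the configurations on which some coordinate exceeds its truncation level have probability $\ord(1/m_n)$, so they contribute only $\ord(p_n/m_n)$ to the truncated second moment; the exceedance bound for $\eta_{n,i}$ at levels $c\sqrt{p_n}$ with $c<1$ likewise needs (i) and (ii) together, via Chebyshev on the truncated part. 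Second, for $S_n^*-S_n'^*$ to be a sum of independent copies of $\eta_{n,1}$ you should take the starred variables to be independent copies of the \emph{pairs} $(V_{n,i},\tilde V_{n,i})$ --- the same implicit coupling the paper uses.
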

\begin{proof}[Proof of Lemma \ref{lemma:emp.prozess.fidi.unab}]
	Let $\Delta_{n,i}:=V_{n,i}-\tilde V_{n,i}$ and   $\Delta_{n,i}^*$ be independent copies of $\Delta_{n,i}$, $1\le i\le m_n$. For the $k$ for which (MX$_k$) is satisfied and for
	all $i\in\{1,\ldots,k\}$, condition ($\Delta$) and Theorem 1 of Section IX.1 of \cite{petrov1975} yield
   \begin{equation}
	\label{eq: delta k unab kon 0}
	\frac{1}{\sqrt{p_n}} \sum_{j=1}^{m_{n,k,i}} \big(\Delta_{n,jk-i}^*(g)-E[\Delta_{n,jk-i}^*(g)]\big)=\ord_P(1), \qquad  \forall\, g\in\mathcal{G},
	\end{equation}
    where $m_{n,k,i}:=\floor{(m_n+i)/k}\le m_n$.
	
	Recall that	$\Delta_{n,jk-i}$ is measurable w.r.t.\ $(X_{n,(jk-i-1)r_n+1},\ldots,X_{n,(jk-i)r_n+s_n-1})$. For different $j$, these blocks are separated by at least $(k-1)r_n-s_n$ observations. Hence, by (MX$_k$) and Lemma 2 of \cite{eberlein1984}, the total variation distance between the joint distribution of $\Delta_{n,jk-i}$, $1\leq j\leq m_{n,k,i}$, and that of $\Delta_{n,jk-i}^*$, $1\leq j\leq m_{n,k,i}$, converges to 0:
	\begin{equation}
	\|P^{ (\Delta_{n,jk-i}^*)_{1\leq j\leq m_{n,k,i}} }- P^{ (\Delta_{n,jk-i})_{1\leq j\leq m_{n,k,i}} } \|_{TV}
	\leq m_{n,k,i} \beta_{n,(k-1)r_n-s_n}^X \rightarrow 0.   \label{eq: eberlein fuer delta}
	\end{equation}
	Hence \eqref{eq: delta k unab kon 0} holds with  $\Delta_{n,jk-i}$ instead of $\Delta_{n,jk-i}^*$.	Summing over all $i\in\{0,\ldots,k-1\}$ leads to 	
	\begin{equation}
	\label{eq: delta kon 0}
	\frac{1}{\sqrt{p_n}} \sum_{j=1}^{m_{n}} \big(\Delta_{n,j}(g)-E\Delta_{n,j}(g)\big)=\ord_P(1),\qquad \forall\, g\in\mathcal{G}.
	\end{equation}
	Thus the fidis of $\tilde{Z}_n$ defined by
    \begin{equation}
      \tilde{Z}_n(g) := \frac{1}{\sqrt{p_n}}\sum_{i=1}^{m_n}\big(\tilde{V}_{n,i}(g)-E\tilde{V}_{n,i}(g)\big)
   = Z_n(g) - \frac{1}{\sqrt{p_n}} \sum_{j=1}^{m_{n}} \left(\Delta_{n,j}(g)-E\Delta_{n,j}(g)\right), \quad g\in\GG,
   \end{equation}
   converge if and only if the fidis of $Z_n$ converge, and the limits coincide if they exist.

   Now, by assumption (M$\tilde V$) and again the inequality by \cite{eberlein1984},
   \begin{equation}
	\|P^{ (\tilde{V}_{n,j}^*)_{1\leq j\leq m_{n}} }- P^{ (\tilde{V}_{n,j})_{1\leq j\leq m_{n}} } \|_{TV} \leq m_n \beta_{n,0}^{\tilde{V}} \rightarrow 0.
	\end{equation}
    where $\tilde V_{n,i}^*$ are iid copies of $\tilde V_{n,i}$. Hence, the fidis of $\tilde V_n$ converge if and only if the fidis of
    \begin{equation}
      \tilde{Z}_n^*(g) := \frac{1}{\sqrt{p_n}}\sum_{i=1}^{m_n}\big(\tilde{V}_{n,i}^*(g)-E\tilde{V}^*_{n,i}(g)\big), \quad g\in\GG,
   \end{equation}
   converge. Finally using the analog to \eqref{eq: delta kon 0} with $\Delta_{n,j}^*$ instead of $\Delta_{n,j}$, we arrive at the assertion.
\end{proof}

\begin{proof}[Proof of Theorem \ref{satz: emp prozess fidi}]
	 In view of the assumptions (L) and (C),  the multivariate central limit theorem by Lindeberg-Feller yield convergence of the fidis of $(Z_n^*(g))_{g\in\mathcal{G}}$. The assertion thus follows from Lemma \ref{lemma:emp.prozess.fidi.unab}.
\end{proof}

\begin{proof}[Proof of Theorem \ref{satz: asymp tight_equicon}]

	It suffices to prove that, for all $i\in\{0,\ldots, k-1\}$, the processes
	\begin{equation}
	\label{eq:zn(i) definition}
	Z_n^{(i)}(g)=\frac{1}{\sqrt{p_n}}\sum_{j=1}^{m_{n,k,i}}\left(V_{n,kj-i}(g)-EV_{n,kj-i}(g)\right), \quad g\in\GG,
	\end{equation}
	(with $m_{n,k,i}:=\floor{(m_n+i)/k}$) are asymptotically tight or asymptotically equicontinuous, respectively,  since these properties carry over to their sum $Z_n$.

    By the same arguments as in the proof of Lemma \ref{lemma:emp.prozess.fidi.unab} (cf.\ \eqref{eq: eberlein fuer delta}), we may conclude
    \begin{equation}
	\label{eq:eberlein fuer V}
	\|P^{(V_{n,jk-i}^*)_{1\leq j\leq m_{n,k,i}}}-P^{(V_{n,jk-i})_{1\leq i\leq m_{n,k,i}}}\|_{TV}\leq m_{n,k,i} \beta^X_{n,(k-1)r_n-s_n}\to 0,
	\end{equation}
	where $V_{n,jk-i}^*$, ${1\leq j\leq m_{n,k,i}}$ are independent copies of $V_{n,1}$.
	
    Therefore, it suffices to prove asymptotic tightness or asymptotic equicontinuity, respectively, of
	\begin{equation}
	Z_n^{(i)*}(g)=\frac{1}{\sqrt{p_n}}\sum_{j=1}^{m_{n,k,i}}\left(V_{n,kj-i}^*(g)-E[V_{n,kj-i}^*(g)]\right), \quad g\in\GG.
	\end{equation}
	This, however, follows under the given conditions (B), (L1), (D1) and (D2) from Theorem 2.11.9, and under the given conditions (B), (L2), (D0), (D1) and (D3) from Theorem 2.11.1 of \cite{vanderVaart1996}. Note that the measurability condition of the latter theorem is automatically fulfilled if the processes are separable.
\end{proof}

\subsection{Proofs of Section \ref{section:sliding}}
\begin{proof}[Proof of Theorem \ref{cor: g besch bedingung sliding blocks}]
    First check that, by assumption (A2),
   \begin{align}
	E^*\bigg[  \sup_{g\in\GG} (Z_n(g)-\bar Z_n(g))^2 \bigg]
    & = E^* \bigg[  \sup_{g\in\GG} \bigg(\frac 1{\sqrt{p_n}b_n(g)}\sum_{j=r_nm_n+1}^{n-s_n} \big(g(Y_{n,j})-E(g(Y_{n,j})) \big) \bigg)^2 \bigg]\\
    & \le  \frac{\|g_{\max}\|_{\infty}^2 r_n^2}{p_n \inf_{g\in\mathcal{G}}b_n(g)^2} \to 0,
	\end{align}
   which implies \eqref{eq:sliding fidi con wie ursprung proz}.

   To prove convergence of the fidis, we apply Theorem \ref{satz: emp prozess fidi} to
	 $V_{n,i}$ defined by \eqref{eq:Vnidefsliding} and
	$\tilde{V}_{n,i}(g)=(b_n(g))^{-1}\sum_{j=1}^{r_n-l_n} g(Y_{n,(i-1)r_n+j})$, for which condition (V) is obvious.
   The conditions (M$\tilde V$) and (MX$_2$) follow readily from (MX) and $\ell_n=\ord(r_n)$.

   For the above choices, we obtain
	\begin{equation}
	\Delta_{n,1}(g)=V_{n,1}(g)-\tilde{V}_{n,1}(g)=\frac{1}{b_n(g)}\sum_{j=r_n-l_n+1}^{r_n} g(Y_{n,j})\overset{d}{=}\frac{1}{b_n(g)}\sum_{j=1}^{l_n} g(Y_{n,j}).
	\end{equation}
   Using the arguments of the proof of Cor.\ 3.6 of \cite{drees2010} with $X_{n,i}$ replaced by $g(Y_{n,i})$ (cf.\ also the proof of Theorem \ref{cor:g unbounded sliding block con}), we see that
   \begin{align}
	E\left[(\Delta_n(g))^2\right]
	&\leq  \frac{1}{b_n(g)^2}\|g_{\max}\|_{\infty}^2 E\bigg[\bigg(\sum_{j=1}^{l_n} \mathds{1}_{\left\{g(Y_{n,j}) \neq 0\right\}} \bigg)^2\bigg]\\
  & =\Ord\bigg(\frac{l_n}{r_nb_n(g)^2}E\bigg(\sum_{j=1}^{r_n} \mathds{1}_{\left\{g(Y_{n,j}) \neq 0\right\}} \bigg)^2\bigg)\\
  & = \ord\bigg(\frac{p_n}{m_n}\bigg).  \label{eq:Deltan2ndmombound}
	\end{align}
   Hence, Condition \eqref{eq:bed-c1-zweites.moment} is fulfilled, which in turn implies Condition ($\Delta$).

	Since $g_{\max}$ is bounded and $\inf_{g\in\mathcal{G}}b_n(g)>0$, we have
	\begin{equation} \label{eq:VnGbound}
	V_n(\mathcal{G})=\sup_{g\in\mathcal{G}}\frac{1}{b_n(g)}\sum_{j=1}^{r_n} g(Y_{n,(i-1)r_n+j}) \leq r_n\|g_{\max}\|_{\infty}\frac{1}{\inf_{g\in\mathcal{G}} b_n(g)}
	< \infty.
	\end{equation}
	Because of $r_n=\ord(\sqrt{p_n}\inf_{g\in\mathcal{G}}b_n(g))$, for all $\epsilon>0$, eventually $V_n(\mathcal{G})\leq  \sqrt{p_n}\epsilon$, so that Condition (L2) (and thus (L), too) is trivial.
	Now the assertion follows from Theorem~\ref{satz: emp prozess fidi}.
\end{proof}

\begin{proof}[Proof of Lemma \ref{lemma:pratts lemma bedingung allgemeines setting}]
   The stationarity assumption (A1) and condition (S) imply
	\begin{align}
	E\bigg(\sum_{j=1}^{r_n} \mathds{1}_{\left\{g(Y_{n,j}) \neq 0\right\}}\bigg)^2
	&= \sum_{i=1}^{r_n}\sum_{j=1}^{r_n}E\left[ \mathds{1}_{\left\{g(Y_{n,i}) \neq 0\right\}}\mathds{1}_{\left\{g(Y_{n,j}) \neq 0\right\}}  \right]\\
	&\le 2 r_n \sum_{k=1}^{r_n}\left(1-\frac{k-1}{r_n}\right) P\left\{ g(Y_{n,1}) \neq 0,  g(Y_{n,k}) \neq 0  \right\}\\
     &= \Ord\bigg(\frac{p_n b_n(g)^2}{m_n}\bigg).
	\end{align}
   \end{proof}

\begin{proof}[Proof of Theorem \ref{cor: g besch bedingung sliding blocks process}]
    Because of \eqref{eq:sliding fidi con wie ursprung proz}, the convergence of $Z_n$ and the convergence of $\bar Z_n$ are equivalent. To prove the former, we apply Theorem \ref{satz: asymp tight_equicon} to the processes $V_{n,i}$ and $\tilde V_{n,i}$ defined in the proof of Theorem \ref{cor: g besch bedingung sliding blocks}. Since the conditions (V), (MX$_2$) and (L2) have already been verified there and the (D$i$)-conditions, $i\in\{0,1,2,3\}$, are explicitly assumed to hold in Theorem \ref{cor: g besch bedingung sliding blocks process}, it remains to show that (B) holds. This, however, is obvious from \eqref{eq:VnGbound}.
\end{proof}

\begin{proof}[Proof of Theorem \ref{cor:g unbounded sliding block con}]
 We again apply Theorem \ref{satz: emp prozess fidi} to establish fidi-convergence of $(Z_n(g))_{g\in\mathcal{G}}$. Only the conditions ($\Delta$) and (L) must be verified, because the remaining conditions follow as in the proof of Theorem \ref{cor: g besch bedingung sliding blocks}.

	By the H\"{o}lder inequality, the generalized Markov inequality and \eqref{eq: bed g messbar bedingung sliding blocks}, for all $g\in\GG$, we obtain
	\begin{align}
	E\Big[ &(V_n(g))^2 \mathds{1}_{\left\{|V_n(g)|>\sqrt{p_n}\epsilon\right\}}\Big]
	\\
	&=\frac{1}{b_n^2(g)}E\bigg[ \bigg(\sum_{i=1}^{r_n}g(Y_{n,i})\bigg)^2 \mathds{1}_{\left\{\big|\sum_{i=1}^{r_n}g(Y_{n,i})\big|>\sqrt{p_n}b_n(g)\epsilon\right\}}\bigg]\\
	&\leq\frac{1}{b_n^2(g)} \bigg(E\bigg[ \bigg|\sum_{i=1}^{r_n}g(Y_{n,i})\bigg|^{2+\delta}\bigg]\bigg)^{2/(2+\delta)} \bigg(E\Big[\mathds{1}_{\big\{\big|\sum_{i=1}^{r_n}g(Y_{n,i})\big|>\sqrt{p_n}b_n(g)\epsilon \big\}}\Big]\bigg)^{\delta/(2+\delta)}\\
	&\le \frac{1}{b_n^2(g)} \bigg(E\bigg[ \bigg|\sum_{i=1}^{r_n}g(Y_{n,i})\bigg|^{2+\delta}\bigg]\bigg)^{2/(2+\delta)}
  \bigg(\frac{E\big[ |\sum_{i=1}^{r_n}g(Y_{n,i})|^{2+\delta}\big]}{(\sqrt{p_n}b_n(g)\epsilon)^{2+\delta}} \bigg)^{\delta/(2+\delta)}\\
	&= \Ord\bigg(\frac{1}{b_n^2(g)}\cdot\frac{p_nb_n^2(g)}{m_n} \cdot\frac{1}{(\sqrt{p_n}b_n(g))^{\delta}}\bigg)
	 =\ord\bigg(\frac{p_n}{m_n}\bigg),
	\end{align}
	because $\sqrt{p_n}b_n(g)\to \infty$ by assumption (A2). It is easily seen (cf.\ Section 7 in the Supplement) that this bound implies condition (L).

   Furthermore,
   \begin{align}
   E\bigg[\bigg(\sum_{i=1}^{r_n} |g(Y_{n,i})|\Bigg)^2\bigg]&
    \ge \sum_{j=1}^{\floor{r_n/l_n}} E\bigg[\bigg(\sum_{i=1}^{l_n} |g(Y_{n,(j-1)l_n+i})|\bigg)^2\bigg]
    &= \floor{r_n/l_n} E\bigg[\bigg(\sum_{i=1}^{l_n} |g(Y_{n,i})|\bigg)^2\bigg]
   \end{align}
   and thus, by \eqref{eq: bed g messbar bedingung sliding blocks},
   \begin{align}
      E(\Delta_n(g)^2) & \le \frac 1{b_n^2(g)} E\bigg[\bigg(\sum_{i=1}^{l_n} |g(Y_{n,i})|\bigg)^2\bigg]\\
        & \le \frac 1{b_n^2(g) \floor{r_n/l_n}}E\bigg[\bigg(\sum_{i=1}^{r_n} |g(Y_{n,i})|\bigg)^2 \bigg]\\
        & \le \frac 1{b_n^2(g) \floor{r_n/l_n}} E\bigg[\bigg(\sum_{i=1}^{r_n} |g(Y_{n,i})|\bigg)^{2+\delta}+\ind{\big\{\sum_{i=1}^{r_n} |g(Y_{n,i})|\ne 0\big\}}\bigg] \\
        & = \Ord\bigg( \frac{l_n}{r_n b_n^2(g)}\bigg( \frac{p_n b_n^2(g)}{m_n} +P\{V_n(|g|)\ne 0\}\bigg)\bigg)=\ord\bigg( \frac{p_n}{m_n}\bigg) 
   \end{align}
   where in the last step we have used the assumption $m_n l_nP\{V_n(|g|)\ne 0\}=\ord(r_nb_n^2(g)p_n)$ for all $g\in\mathcal{G}$. Hence, condition \eqref{eq:bed-c1-zweites.moment} holds, which in turn implies ($\Delta$). Now, the convergence of the fidis of $(Z_n(g))_{g\in\mathcal{G}}$ follows from Theorem \ref{satz: emp prozess fidi}.

  Similarly,
	\begin{align}
E \Big((\bar Z_n(g)-Z_n(g))^2\Big)
	& = Var\bigg[\frac{1}{\sqrt{p_n}b_n(g)}\sum_{j=r_nm_n+1}^{n-s_n} g(Y_{n,j})\bigg]\\
	&\leq \frac{1}{p_n b_n^2(g)} E\bigg[\bigg( \sum_{j=r_nm_n+1}^{n-s_n} |g(Y_{n,j})| \bigg)^2\bigg]\\
   & = \Ord\bigg( \frac{1}{m_n}+\frac{P\{V_n(|g|)\ne 0\}}{p_n b_n^2(g)}\bigg)\to 0,
	\end{align}
because $p_nb_n^2(g)\to\infty$ by assumption (A2), so that the fidi-convergence of $(\bar{Z}_n(g))_{g\in\mathcal{G}}$ follows, too.

Under the conditions of part (ii), the above calculations with $g_{\max}$ instead of $g$ yield \eqref{eq:sliding fidi con wie ursprung proz} as well as
$$ E^*\Big[(V_n(\GG))^2\ind{\{V_n(\GG)>\sqrt{p_n}\epsilon\}}\Big] =\frac{1}{b_n^2}E\bigg[ \bigg(\sum_{i=1}^{r_n}g_{\max}(Y_{n,i})\bigg)^2 \mathds{1}_{\left\{\sum_{i=1}^{r_n}g_{\max}(Y_{n,i})>\sqrt{p_n}b_n\epsilon\right\}}\bigg]=\ord\Big(\frac{p_n}{m_n}\Big),
$$
i.e.\ (L2). Since Condition (B) is obvious, the assertion follows from Theorem \ref{satz: asymp tight_equicon}.
\end{proof}


\subsection{Proofs of Subsection \ref{section:sliidng.vs.disjoint}}
\begin{proof}[Proof of Theorem \ref{lemma:slidng vs disjoint zaehler}]
		We compare the pre-asymptotic variances which converge to $c^{(d)}$ and $c^{(s)}$, respectively. Check that, by stationarity,
	\begin{align}
	\frac{1}{r_nv_na_n^2}& Var\bigg(\sum_{i=1}^{r_n/s_n}g(Y_{n,is_n+1})\bigg)\\
	&=\frac{1}{r_nv_na_n^2} E\bigg[\sum_{i=1}^{ r_n/s_n}\sum_{j=1}^{ r_n/s_n}g(Y_{n,js_n+1})g(Y_{n,is_n+1}) \bigg]-\frac{1}{r_nv_na_n^2}\Big(\frac{r_n}{s_n} E[g(Y_{n,0})]\Big)^2\\
	&=\frac{1}{r_nv_na_n^2} \sum_{k=- r_n/s_n+1}^{ r_n/s_n-1}\left( \frac{r_n}{s_n}-|k|\right) E\left[g(Y_{n,ks_n})g(Y_{n,0}) \right] -\frac{r_n E[g(Y_{n,0})]^2}{s_n^2v_na_n^2} \\
	&=\frac{1}{s_nv_na_n^2} \sum_{l=-r_n+1}^{r_n-1}\mathds{1}_{\{l \text{ mod }s_n =0 \}}\left(1-\frac{|l|}{r_n}\right) E\left[g(Y_{n,l})g(Y_{n,0}) \right] -\frac{r_n E[g(Y_{n,0})]^2}{s_n^2v_na_n^2}.
	\end{align}
	Similarly
	\begin{align}
	\frac{1}{r_nv_ns_n^2a_n^2} & Var\bigg(\sum_{i=1}^{r_n}g(Y_{n,i})\bigg) \\
& =\frac{1}{v_ns_n^2a_n^2}\sum_{k=-r_n+1}^{r_n-1}\left(1-\frac{|k|}{r_n}\right)E\left[g(Y_{n,0})g(Y_{n,k})\right] -\frac{r_nE\left[g(Y_{n,0})\right]^2}{v_ns_n^2a_n^2}.
	\end{align}
	In view of \eqref{eq:csdef} and \eqref{eq:cddef}, it suffices to show that the
	difference between these pre-asymptotic variances
	\begin{align}
	\frac{1}{s_nv_na_n^2}\bigg( \sum_{k=-r_n+1}^{r_n-1}\left(1-\frac{|k|}{r_n}\right)\gamma_n(k)E\left[g(Y_{n,0})g(Y_{n,k})\right] \bigg)
	\end{align}
	is non-negative. Here
	\begin{equation}
	\gamma_n(k)=\begin{cases}
	1-\frac{1}{s_n},& \text{if } k \text{ mod }s_n=0,\\
	-\frac{1}{s_n},& \text{if } k \text{ mod }s_n\neq 0,
	\end{cases}
	\end{equation}
for $k\in \mathbb{Z}$. To this end, we take up an idea by \cite{zou2019}, proof of Lemma A.10.
	
	Let $U_n$ be uniformly distributed on $\{0,\ldots,s_n-1\}$ and  independent of $(X_{n,i})_{1\le i\le n}$. Define
	\begin{align}
	\phi_{n,k}=\begin{cases}
	\frac{s_n-1}{\sqrt{s_n}},& \text{if } k \text{ mod } s_n=U_n,\\
	-\frac{1}{\sqrt{s_n}},& \text{ else,}
	\end{cases}
	\end{align}
	for $k\in\mathbb{Z}$. If $(h \text{ mod }s_n)=0$ then
	\begin{equation}
	E[\phi_{n,k}\phi_{n,k+h}]=\frac{1}{s_n} \cdot \frac{(s_n-1)^2}{s_n}+\frac{s_n-1}{s_n} \cdot \frac{1}{s_n}=1-\frac{1}{s_n},
	\end{equation}
	whereas for $(h \text{ mod }s_n)\neq 0$
	\begin{equation}
	E[\phi_{n,k}\phi_{n,k+h}]=\frac{2}{s_n}  \cdot \frac{s_n-1}{\sqrt{s_n}} \cdot \frac{-1}{\sqrt{s_n}}+\frac{s_n-2}{s_n} \cdot \frac{1}{s_n}=-\frac{1}{s_n}.
	\end{equation}
	Thus, $E[\phi_{n,k}\phi_{n,k+h}]=\gamma_n(h)$ and
	\begin{equation}
	E[\phi_{n,j}\phi_{n,i}g(Y_{n,i})g(Y_{n,j})] =E[\phi_{n,j}\phi_{n,i}]E[g(Y_{n,i})g(Y_{n,j})]
	=\gamma_{n}(|i-j|)E[g(Y_{n,0})g(Y_{n,|i-j|})]
	\end{equation}
	for all $i,j\in\{1,...,r_n\}$, since $U_n$ and $(X_1,...,X_n)$ are independent.
	Similarly as above, we conclude
	\begin{align}
	0&\leq \frac{1}{r_n}E\bigg[\bigg(\sum_{j=1}^{r_n}\phi_{n,j} g(Y_{n,j})\bigg)^2 \bigg]
	= \frac{1}{r_n}\sum_{j=1}^{r_n}\sum_{i=1}^{r_n}\gamma_{n}(|i-j|)E[g(Y_{n,0})g(Y_{n,|i-j|})]\\
	&= \sum_{k=-r_n+1}^{r_n-1}\left(1-\frac{|k|}{r_n}\right)\gamma_{n}(|k|)E[g(Y_{n,0})g(Y_{n,k})],
	\end{align}
	which proves the assertion.
\end{proof}

\begin{proof}[Proof of Theorem \ref{satz:sliding vs disjoint ganzer schaetzer}]
	W.l.o.g.\ we assume $g\ge 0$ which implies $\xi\ge 0$.
	Since $\tilde{c}^{(d)}-\tilde{c}^{(s)}=c^{(d)}-c^{(s)}-2\xi (c^{(d,v)}-c^{(s,v)})$, in view of Theorem \ref{lemma:slidng vs disjoint zaehler} it suffices to show that $c^{(d,v)}\le c^{(s,v)}$.
    Using the row-wise stationarity of the triangular scheme, the asymptotic covariance $c^{(s,v)}$ can be calculated as the limit of
   \begin{align}
   \frac{1}{r_nv_ns_na_n} & Cov\Big(\sum_{j=1}^{r_n}g(Y_{n,j}),\sum_{i=1}^{r_n}\mathds{1}_{\{X_{n,i}\neq 0\}}\Big)\\
   &= \frac{1}{r_nv_ns_na_n}\sum_{j=1}^{r_n}E\Big[g(Y_{n,j})\sum_{i=1}^{r_n}\mathds{1}_{\{X_{n,i}\neq 0\}}\Big] - \frac 1{r_nv_ns_n a_n}\cdot r_n Eg(Y_{n,1})\cdot r_nv_n\\
   &= \frac{1}{r_nv_ns_na_n}\sum_{j=1}^{r_n}E\Big[g(Y_{n,1})\sum_{i=2-j}^{r_n-j+1}\mathds{1}_{\{X_{n,i}\neq 0\}}\Big] + \frac { r_n Eg(Y_{n,1})}{s_n a_n}.  \label{eq:rhscsv}
  	\end{align}
Likewise,  $c^{(d,v)}$ is the limit of
	\begin{align}
	 \frac{1}{r_nv_na_n} & Cov\Big(\sum_{k=1}^{r_n/s_n}g(Y_{n,(k-1)s_n+1}),\sum_{i=1}^{r_n}\mathds{1}_{\{X_{n,i}\neq 0\}}\Big)\\
	&=\frac{1}{r_nv_na_n}\sum_{k=1}^{r_n/s_n}E\Big[g(Y_{n,1})\sum_{i=1-(k-1)s_n}^{r_n-(k-1)s_n}\mathds{1}_{\{X_{n,i}\neq 0\}}\Big]+ \frac { r_n Eg(Y_{n,1})}{s_n a_n}\\
    & = \frac{1}{r_nv_ns_na_n}\sum_{j=1}^{r_n}E\Big[g(Y_{n,1})\sum_{i=1-\floor{\frac{j-1}{s_n}}s_n}^{r_n-\floor{\frac{j-1}{s_n}}s_n}\mathds{1}_{\{X_{n,i}\neq 0\}}\Big] +\frac { r_n Eg(Y_{n,1})}{s_n a_n}. \label{eq:rhscdv}
	\end{align}
	
	It remains to show that the limit superior of the following difference between both right hand sides of \eqref{eq:rhscdv} and \eqref{eq:rhscsv} is not positive. To this end, note that
	\begin{align}
	 \frac{1}{r_nv_ns_na_n} & \sum_{j=1}^{r_n}E\bigg[g(Y_{n,1})\Big(\sum_{i=1-\floor{\frac{j-1}{s_n}}s_n}^{r_n-\floor{\frac{j-1}{s_n}}s_n}\mathds{1}_{\{X_{n,i}\neq 0\}}-\sum_{i=2-j}^{r_n-j+1}\mathds{1}_{\{X_{n,i}\neq 0\}}\Big)\bigg]\\
        & \le \frac{1}{r_nv_ns_na_n} \sum_{j=2}^{r_n} \sum_{i=r_n-j+2}^{r_n-\floor{\frac{j-1}{s_n}}s_n} E\big(g(Y_{n,1})\mathds{1}_{\{X_{n,i}\neq 0\}}\big)\\
        & =  \frac{1}{r_nv_n s_n a_n} \sum_{i=2}^{r_n}\sum_{j=r_n-i+2}^{(\floor{\frac{r_n-i}{s_n}}+1)s_n} E\big(g(Y_{n,1})\mathds{1}_{\{X_{n,i}\neq 0\}}\big)\\
        & \le \frac{1}{r_nv_n a_n} \sum_{i=2}^{r_n} E\big(g(Y_{n,1})\mathds{1}_{\{X_{n,i}\neq 0\}}\big). \label{eq:covdiff}
	\end{align}

	Note that $E g(Y_{n,1})= \Ord(s_n a_n v_n)$  by \eqref{eq:slidingexpectconv}. Using
    \begin{align}
       E\big(g(Y_{n,1})\mathds{1}_{\{X_{n,i}\neq 0\}}\big)
       & \le E g(Y_{n,1})P\{X_{n,i}\neq 0\}+2\|g\|_\infty\beta_{n,i-s_n-1}^X \\
       & =\Ord(s_n a_n v_n^2)+2\|g\|_\infty\beta_{n,i-s_n-1}^X
    \end{align}
    for $i> s_n+ k_n$ (see \cite{doukhan1994}, Section 1.2, Lemma 3 and Section 1.1, Prop.\ 1)
    and $E\big(g(Y_{n,1})\mathds{1}_{\{X_{n,i}\neq 0\}}\big)\le \|g\|_\infty v_n$ for $i\le s_n+k_n$,  we conclude that \eqref{eq:covdiff} is bounded by
    $$ \frac{s_n+k_n}{r_n a_n}\|g\|_\infty + \Ord(s_nv_n)+\frac{2\|g\|_\infty}{r_nv_n a_n} \sum_{l=k_n}^{r_n} \beta_{n,l}^X $$
    which tends to 0 under the given conditions.
    (In fact, similarly one can establish a lower bound on the difference between the pre-asymptotic covariances which shows that the difference tends to 0.)
\end{proof}

\subsection{Proofs of Section \ref{section:extremal.index}}

If ($\theta$1) and ($\theta$P) hold for some sequence $r_n$, then the former is obviously fulfilled by $r_n^*:=\floor{r_n/s_n}s_n$, too, and ($\theta$P) remains true because of
$$ \sum_{k=r_n^*+1}^{r_n} P(X_k>u_n|X_0>u_n) \le \frac{s_n}{v_n}(v_n^2+\beta_{n,r_n^*}^X)\le r_nv_n+\frac{n}{r_n}\beta_{n,s_n}\frac{r_n^2}{nv_n}\to 0. $$
Moreover, the arguments given in Subsection 2.1 show that the limit $c$ in ($\theta$2) does not change if we replace $r_n$ with $r_n^*$. Thus, w.l.o.g.\ we may assume that $r_n/s_n$ is a natural number (tending to $\infty$) for all $n\in\N$.

For all three estimators, we first prove joint convergence of a bivariate vector with components related to the numerator and the denominator, respectively, using the general theory developed in Section \ref{section:sliding} and Appendix \ref{section:abstract}.

We start with analyzing the disjoint blocks estimator using Theorem \ref{satz: emp prozess fidi}.
For $i\in\{1,\ldots,m_n\}$ with $m_n=\floor{(n-s_n+1)/r_n}$, let
\begin{align}
V_{n,i}^d &:=\frac{1}{\sqrt{m_n}}\sum_{j=1}^{r_n/s_n} \mathds{1}_{\{M_{(i-1)r_n+(j-1)s_n+1,(i-1)r_n+j s_n}> u_n\}},\\ \tilde{V}_{n,i}^d &:=\frac{1}{\sqrt{m_n}}\sum_{j=1}^{r_n/s_n-1} \mathds{1}_{\{M_{(i-1)r_n+(j-1)s_n+1,(i-1)r_n+j s_n}> u_n\}},\\
V_{n,i}^c & :=\frac{1}{\sqrt{m_n}}\sum_{j=1}^{r_n}\mathds{1}_{\{X_{(i-1)r_n+j}>u_n\}},\\
\tilde{V}_{n,i}^c  & := \frac{1}{\sqrt{m_n}}\sum_{j=1}^{r_n-s_n}\mathds{1}_{\{X_{(i-1)r_n+j}>u_n\}}.
\end{align}

Let $p_n=P\{M_{1,r_n}>u_n\}$. Recall that, under the conditions ($\theta$1) and ($\theta$P), \eqref{eq:maximum zu einzelueberschreitung} holds for all $k_n\to\infty$, $k_n\le r_n$, which in turn yields
\begin{equation} \label{eq:pnasymp}
p_n = r_n v_n (\theta+\ord(1)), \qquad P\{M_{1,s_n}>u_n\} = s_nv_n (\theta+\ord(1)),
\end{equation}
with $v_n=P\{X_1>u_n\}$.

\begin{proposition}
	\label{prop:extremal index disjoint zaehler und nenner}
	If the conditions ($\theta$1), ($\theta$2) and ($\theta$P) are satisfied, then
	\begin{align}
	\begin{pmatrix}
	Z^d_n\\Z^c_n
	\end{pmatrix}:=\begin{pmatrix}
	\frac{1}{\sqrt{p_n}}\sum_{i=1}^{m_n}\left(V_{n,i}^d-E[V_{n,i}^d]\right) \\
	\frac{1}{\sqrt{p_n}}\sum_{i=1}^{m_n}\left(V_{n,i}^c-E[V_{n,i}^c]\right)  \\
	\end{pmatrix} \xrightarrow{w} \begin{pmatrix}
	Z^d\\Z^c
	\end{pmatrix} \sim \mathcal{N}_2\left(0,\begin{pmatrix}	1 & 1/\theta\\
	1/\theta & c/\theta
	\end{pmatrix}\right).
	\end{align}
\end{proposition}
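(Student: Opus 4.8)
The plan is to deduce the bivariate central limit theorem from the abstract fidi result, Theorem~\ref{satz: emp prozess fidi}, applied with $\GG=\{d,c\}$, $V_{n,i}(d):=V^d_{n,i}$, $V_{n,i}(c):=V^c_{n,i}$, and the stated tilde-processes as the small-block approximations, so that the dropped small block has length $l_n=s_n$. The asserted convergence is then exactly the convergence of the two-dimensional fidi of $(Z_n(g))_{g\in\GG}$, whose limiting covariance matrix is $(c(g,h))_{g,h\in\{d,c\}}$. I would first check the structural hypotheses. Condition (A) is part of ($\theta$1); (D0) is automatic since $\GG$ is finite; and (V) holds because $V^d_{n,i},V^c_{n,i}$ are measurable with respect to $(X_{(i-1)r_n+1},\dots,X_{ir_n})$ and the array is row-wise stationary. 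Since both $\tilde V_{n,i}$ depend only on $(X_{(i-1)r_n+1},\dots,X_{ir_n-s_n})$, the reduced blocks are separated by at least $s_n$ observations, so (M$\tilde V$) and (MX$_2$) are both dominated by $m_n\beta^X_{n,s_n-1}\to0$, which is guaranteed by $(n/r_n)\beta^X_{n,s_n-1}\to0$ in ($\theta$1) together with $m_n\asymp n/r_n$.

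Next I would verify ($\Delta$) and (L). For ($\Delta$) it suffices to establish the second-moment bound \eqref{eq:bed-c1-zweites.moment}. As $\Delta^c_n$ is distributed like $m_n^{-1/2}\sum_{j=1}^{s_n}\ind{\{X_j>u_n\}}$ and $\Delta^d_n$ like $m_n^{-1/2}\ind{\{M_{1,s_n}>u_n\}}$, condition ($\theta$P) gives $E[(\sum_{j=1}^{s_n}\ind{\{X_j>u_n\}})^2]=\Ord(s_nv_n)$ and \eqref{eq:pnasymp} gives $P\{M_{1,s_n}>u_n\}=\Ord(s_nv_n)$; since $p_n\asymp r_nv_n$ and $s_n=\ord(r_n)$, both second moments are $\ord(p_n/m_n)$. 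For (L) I would use that $0\le V^c_n,V^d_n\le r_n/\sqrt{m_n}$, while $r_n=\ord(\sqrt{nv_n})$ and $m_n\asymp n/r_n$ force $r_n/\sqrt{m_n}=\ord(\sqrt{r_nv_n})=\ord(\sqrt{p_n})$; hence the centred summands are eventually below $\epsilon\sqrt{p_n}$ for every $\epsilon>0$ and (L) is trivial.

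The core of the argument is condition (C), the evaluation of $c(g,h)=\lim_n(m_n/p_n)\,Cov(V_n(g),V_n(h))$. Writing $B_j:=\ind{\{M_{(j-1)s_n+1,js_n}>u_n\}}$ and $N_n:=\sum_{i=1}^{r_n}\ind{\{X_i>u_n\}}$, the three entries are $p_n^{-1}$ times $Var(\sum_jB_j)$, $Cov(\sum_jB_j,N_n)$ and $Var(N_n)$. The $(c,c)$-entry follows immediately from ($\theta$2), since $Var(N_n)=E[N_n^2](1+\ord(1))=c\,r_nv_n(1+\ord(1))$ gives $c/\theta$. For the $(d,c)$-entry the key is the pointwise identity $B_j\cdot(\text{count in block }j)=(\text{count in block }j)$, so that the in-block part of $E[(\sum_jB_j)N_n]$ equals $\sum_jE[\text{count in block }j]=r_nv_n$, yielding $1/\theta$ after division by $p_n\sim\theta r_nv_n$. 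For the $(d,d)$-entry the diagonal contributes $\sum_jVar(B_j)\sim(r_n/s_n)P\{M_{1,s_n}>u_n\}\sim p_n$, giving $1$. In every case there remain cross-terms — off-block indicator products for $(d,c)$, covariances $Cov(B_j,B_k)$ with $j\ne k$ for $(d,d)$ — that must be shown to be $\ord(p_n)$.

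The main obstacle is precisely the negligibility of these cross-terms, and this is where ($\theta$P) does the real work via Pratt's lemma, allowing the interchange of the limit with the sums $\sum_k e_n(k)$. Bounding every joint exceedance probability by $P\{X_l>u_n,X_{l'}>u_n\}\le v_n e_n(|l-l'|)$ and organizing the double sums by the separation $d=|l-l'|$, I would split into a far regime $d\ge s_n$ and a near regime $d<s_n$. The far regime is bounded by $r_nv_n\sum_{d\ge s_n}e_n(d)$, and $\sum_{d=s_n}^{r_n}e_n(d)\to0$ because it is the difference of two truncations of $\sum_k e_n(k)$ that both converge to $\sum_k\lim_n e_n(k)$; hence this is $\ord(p_n)$. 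In the near regime the offending pairs must straddle one of the $\Ord(r_n/s_n)$ block boundaries, with at most $d$ pairs per boundary at separation $d$, so the contribution is $\Ord((r_nv_n/s_n)\sum_d\min(d,s_n)e_n(d))$; since $\min(d,s_n)/s_n\to0$ pointwise while $\sum_d e_n(d)$ stays bounded, dominated convergence (again powered by ($\theta$P)) forces $\sum_d\min(d,s_n)e_n(d)=\ord(s_n)$, whence this too is $\ord(p_n)$. Finally $\sum_{j\ne k}P\{B_j\}P\{B_k\}\le p_n^2=\ord(p_n)$ disposes of the mean-product terms. With (C) thus confirmed to have limit matrix $\left(\begin{smallmatrix}1&1/\theta\\1/\theta&c/\theta\end{smallmatrix}\right)$, Theorem~\ref{satz: emp prozess fidi} yields the claimed $\mathcal N_2$ limit.
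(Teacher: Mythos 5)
Your proposal is correct and follows essentially the same route as the paper's own proof: both apply Theorem~\ref{satz: emp prozess fidi} with the identical choices of $V_{n,i}$ and $\tilde V_{n,i}$, verify ($\Delta$) and (L) by the same elementary moment/boundedness arguments, identify the in-block exceedances (your identity $B_j\cdot(\text{count in block }j)=(\text{count in block }j)$) as the source of the $1/\theta$ entry, and kill all cross-block terms via union bounds plus Pratt's lemma under ($\theta$P). The only difference is organizational — you sort the cross terms globally by separation distance, whereas the paper derives the per-block uniform bounds \eqref{eq:summaxbound1}--\eqref{eq:summaxbound2} and sums over blocks — which is not a substantive departure.
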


\begin{proof}
	The conditions  (A), (V), (M$\tilde{V}$) and (MX$_2$) follow readily from ($\theta$1). It thus suffices to verify the conditions ($\Delta$), (L) (which can be checked separately for $V_{n,i}^d$ and $V_{n,i}^c$) and (C), in order to conclude the assertion from Theorem \ref{satz: emp prozess fidi}.
	
	Check that
	\begin{align}
	\Delta_{n}^d&:=V_{n,1}^d-\tilde{V}_{n,1}^d= \frac{1}{\sqrt{m_n}}\mathds{1}_{\{M_{r_n-s_n+1,r_n}> u_n\}}
	\overset{d}{=} \frac{1}{\sqrt{m_n}}\mathds{1}_{\{M_{1,s_n}> u_n\}}.
	\end{align}
	Now \eqref{eq:maximum zu einzelueberschreitung} and $s_n=\ord(r_n)$ imply \eqref{eq:bed-c1-zweites.moment}, and thus ($\Delta$), for $V_{n,i}^d$:
	\begin{align}
	\frac{m_n}{p_n}E[(\Delta_{n}^d)^2]&\leq \frac{P\{M_{1,s_n}>u_n\}}{P\{M_{1,r_n}>u_n\}}\ =\frac{P\{M_{1,s_n}>u_n\}}{s_nP\{X_1>u_n\}}\cdot \frac{r_nP\{X_1>u_n\}}{P\{M_{1,r_n}>u_n\}} \cdot \frac{s_n}{r_n} \rightarrow 0.
	\end{align}
	Condition  (L) for $V_{n,i}^d$ follows immediately from
	$V_{n,i}^d\le m_n^{-1/2} r_n/s_n =\Ord\big(r_n/(s_n\sqrt{nv_n})$ $\sqrt{r_n v_n}\big)=\ord(\sqrt{p_n})$, because of \eqref{eq:pnasymp} and ($\theta$1).
	
	Since $V_{n,1}^c$ is a sliding blocks statistic with $X_{n,i}:=X_i/u_n$, bounded function $h(x_1,\ldots,x_s)$ $=1_{(1,\infty)}(x_1)$ and $b_n=\sqrt{m_n}$, the proof of Theorem \ref{cor: g besch bedingung sliding blocks} shows that ($\Delta$) and (L) hold if $r_n=\ord(\sqrt{p_n}b_n)=\ord(\sqrt{r_nv_nm_n})=\ord(\sqrt{nv_n})$ and condition \eqref{eq: bed g besch anzahl ungleich 0} is satisfied; both are immediate consequences of our assumptions ($\theta$1) and ($\theta$2).
	
	It remains to show convergence (C) of the covariance matrix. To this end, first note that by stationarity one has uniformly for all $1\le \ell\le r_n-s_n$
	\begin{align}
	\sum_{j=\ell+s_n+1}^{r_n}  P\big\{M_{\ell+1,\ell+s_n}>u_n, X_j>u_n\big\}
	& \le \sum_{j=s_n+1}^{r_n} P\{M_{1,s_n}>u_n,X_j>u_n\}\\
	& \le \sum_{i=1}^{s_n}\sum_{j=s_n+1}^{r_n} P\{X_i>u_n,X_j>u_n\}\\
	& = s_n v_n \sum_{k=1}^{r_n} \min\Big( 1,\frac{k}{s_n},\frac{r_n-k}{s_n}\Big) P(X_k>u_n|X_0>u_n)\\
	& = \ord(s_nv_n).  \label{eq:summaxbound1}
	\end{align}
	In the last step we have used Pratt's lemma \citep{pratt1960} according to which, under condition ($\theta$P), the limit of the last sum can be calculated as the infinite sum of the limit of each summand, which all equal  0, because $k/s_n\to 0$.
	Likewise,
	\begin{align}
	\sum_{j=1}^{\ell} P\big\{M_{\ell+1,\ell+s_n}>u_n, X_j>u_n\big\}
	\le \sum_{i=1}^{s_n}\sum_{j=s_n-r_n+1}^{0} P\{X_i>u_n,X_j>u_n\}
	= \ord(s_nv_n)  \label{eq:summaxbound2}
	\end{align}
	uniformly for $1\le \ell\le r_n-s_n$.
	
	By stationarity and \eqref{eq:pnasymp},
	\begin{align}
	\frac{m_n}{p_n} Var(V_n^d)
	& = \frac{r_n}{s_n p_n} P\{M_{1,s_n}>u_n\}(1-P\{M_{1,s_n}>u_n\})\\
	& \qquad +\frac 2{p_n} \sum_{1\le i<j\le r_n/s_n} Cov\big( \ind{\{M_{(i-1)s_n+1,is_n}>u_n\}},\ind{\{M_{(j-1)s_n+1,js_n}>u_n\}}\big)\\
	& = (1+\ord(1)) + \frac 2{p_n} \sum_{1\le i<j\le r_n/s_n} P\big\{M_{(i-1)s_n+1,is_n}>u_n, M_{(j-1)s_n+1,js_n}>u_n\big\}\\
	& \qquad + \Ord\bigg(\frac 1{p_n} \Big(\frac{r_n}{s_n}\Big)^2 (s_nv_n)^2\bigg).
	\end{align}
	In view of \eqref{eq:summaxbound1}, the second term can be bounded by
	$$ \frac 2{p_n} \sum_{i=1}^{r_n/s_n-1} \sum_{k=is_n+1}^{r_n} P\big\{M_{(i-1)s_n+1,is_n}>u_n,X_k>u_n\big\} =\ord\Big(\frac{r_nv_n}{p_n}\Big)=\ord(1).
	$$
	Since $(r_n/s_n)^2 (s_nv_n)^2/p_n = \Ord(r_nv_n)\to 0$ by \eqref{eq:pnasymp} and ($\theta$1), we conclude
	$$ \frac{m_n}{p_n} Var(V_n^d) \to 1. $$
	
	Next check that,  by \eqref{eq:pnasymp} and ($\theta$2),
	\begin{align}	\frac{m_n}{p_n}Var(V_{n,1}^c)&=\frac 1{p_n}Var\bigg(\sum_{j=1}^{r_n}\mathds{1}_{\{X_{j}>u_n\}}\bigg)\\
	&=\frac{r_nv_n}{p_n}\cdot\frac{1}{r_nv_n}E\bigg[\bigg(\sum_{j=1}^{r_n} \mathds{1}_{\{X_{j}>u_n\}}\bigg)^2  \bigg]- \frac 1{p_n}(r_nv_n)^2\\
	& = (1/\theta+\ord(1))(c+\ord(1))+ \Ord(r_nv_n)\\
	&\rightarrow c/\theta.  \label{eq:Varhconv}
	\end{align}
	Finally, again by \eqref{eq:pnasymp}, \eqref{eq:summaxbound1} and \eqref{eq:summaxbound2},
	\begin{align}
	\frac{m_n}{p_n} & Cov\left(V_{n,1}^d,V_{n,1}^c\right)\\
	& =\frac 1{p_n} \bigg(\sum_{i=1}^{r_n/s_n}\sum_{j=1}^{r_n} P\{M_{(i-1)s_n+1,is_n}>u_n,X_j>u_n\}-\frac{r_n}{s_n}P\{M_{1,s_n}>u_n\} r_n v_n\bigg)\\
	&   =\frac 1{p_n} \sum_{i=1}^{r_n/s_n}\bigg(s_nv_n + \sum_{j=1}^{(i-1)s_n} P\{M_{(i-1)s_n+1,is_n}>u_n,X_j>u_n\}\\
	& \hspace{3cm} +
	\sum_{j=is_n+1}^{r_n} P\{M_{(i-1)s_n+1,is_n}>u_n,X_j>u_n\}\bigg) + \Ord(r_nv_n)\\
	&   =\frac 1{p_n} \sum_{i=1}^{r_n/s_n}\big(s_nv_n +\ord(s_nv_n)\big) + \Ord(r_nv_n)\\
	& \to 1/\theta.
	\end{align}
\end{proof}

Next, we turn to the sliding blocks estimator.  Numerator and denominator can be written in terms of the process $\bar Z_n$ in \eqref{eq:defbarZn} based on $X_{n,i}:=X_i/u_n$ and the following bounded functions:
\begin{align}
g(x_1,\ldots,x_s) :=\mathds{1}_{\{\max_{1\le i\le s} x_i>1\}},\qquad
h(x_1,\ldots,x_s) :=\mathds{1}_{\{x_1>1\}}.
\end{align}
As normalizing sequences we choose $b_n(g)= \sqrt{nv_n/p_n^s}s_n $ and $b_n(h)= \sqrt{nv_n/p_n^s}$ with $p_n^s := P\{M_{1,r_n+s_n-1}>u_n\}=r_nv_n\theta(1+\ord(1))$, by \eqref{eq:maximum zu einzelueberschreitung}.
\begin{proposition}
	\label{prop:extremal index sliding zaehler und nenner}
	If the conditions ($\theta$1), ($\theta$2) and ($\theta$P) are satisfied, then
	\begin{align}
	\begin{pmatrix}
	\bar Z_n(g)\\ \bar Z_n(h)
	\end{pmatrix}&=\begin{pmatrix}
	\frac{1}{\sqrt{nv_n}s_n}\sum_{i=1}^{n-s_n+1}(\mathds{1}_{\{M_{i,i+s_n-1}> u_n\}}-p_n) \\
	\frac{1}{\sqrt{nv_n}}\sum_{i=1}^{n-s_n+1}(\mathds{1}_{\{X_i>u_n\}}-v_n) \\
	\end{pmatrix} \\
	&\xrightarrow{w} \begin{pmatrix}
	Z(g)\\Z(h)
	\end{pmatrix} \sim \mathcal{N}_2\left(0,\begin{pmatrix}	\theta  & 1 \\
	1 & c
	\end{pmatrix}\right).
	\end{align}
\end{proposition}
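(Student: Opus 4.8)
The plan is to deduce the claim directly from Theorem \ref{cor: g besch bedingung sliding blocks}, applied to the triangular array $X_{n,i}:=X_i/u_n$, the two-element class $\GG=\{g,h\}$, the normalizing constants $b_n(g)=\sqrt{nv_n/p_n^s}\,s_n$ and $b_n(h)=\sqrt{nv_n/p_n^s}$, and the big-block length $r_n$ from ($\theta$1). First I would record that for these choices $g(Y_{n,j})\neq0$ iff $M_{j,j+s_n-1}>u_n$ and $h(Y_{n,j})\neq0$ iff $X_j>u_n$, so that $\{\exists g\in\GG:V_n(g)\neq0\}=\{M_{1,r_n+s_n-1}>u_n\}$ and hence the process probability $p_n$ coincides with $p_n^s$, as in the definition preceding the proposition. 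Since $\GG$ is finite, convergence of the two-dimensional fidi of $\bar Z_n$ over $\{g,h\}$ is exactly the asserted bivariate convergence, and substituting $\sqrt{p_n^s}\,b_n(g)=\sqrt{nv_n}\,s_n$ and $\sqrt{p_n^s}\,b_n(h)=\sqrt{nv_n}$ into \eqref{eq:defbarZn} reproduces the explicit expressions in the statement. It thus remains to verify the hypotheses of Theorem \ref{cor: g besch bedingung sliding blocks}.

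The structural conditions are quick. (A1) holds because $(X_t)$ is stationary, and (D0) is automatic as $\GG$ is finite. For (A2) I would note that $\inf_{g\in\GG}b_n(g)=b_n(h)$, whence $\sqrt{p_n}\,\inf_{g\in\GG}b_n(g)=\sqrt{nv_n}$ and the constraint $r_n=\ord(\sqrt{p_n}\inf b_n)$ reduces to $r_n=\ord(\sqrt{nv_n})$, which together with $r_n=\ord(n)$ and $p_n^s\sim r_nv_n\theta\to0$ is furnished by ($\theta$1). Taking the small-block length $l_n:=2s_n$ gives $s_n\le l_n=\ord(r_n)$, and since $\beta^X_{n,\cdot}$ is non-increasing and $m_n\le n/r_n$, condition (MX) follows from $(n/r_n)\beta^X_{n,s_n-1}\to0$ in ($\theta$1). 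Finally, \eqref{eq: bed g besch anzahl ungleich 0} holds for $h$ directly from ($\theta$2), while for $g$ I would use that each exceedance lies in at most $s_n$ of the sliding blocks, so that $\sum_{j=1}^{r_n}\mathds{1}_{\{M_{j,j+s_n-1}>u_n\}}\le s_n\sum_{i=1}^{r_n+s_n-1}\mathds{1}_{\{X_i>u_n\}}$; squaring and invoking ($\theta$2) yields the required $\Ord(r_nv_ns_n^2)=\Ord(p_nb_n^2(g)/m_n)$ bound.

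The substance lies in condition (C), i.e.\ in computing the three limits $c(h,h)=c$, $c(g,h)=1$ and $c(g,g)=\theta$. The first mirrors \eqref{eq:Varhconv} and is immediate from ($\theta$2). For $c(g,h)$ I would expand the covariance into a double sum, discard the product of means (of normalized order $r_nv_n\theta\to0$), and split $\sum_{j,i}P\{M_{j,j+s_n-1}>u_n,\,X_i>u_n\}$ according to whether $i$ lies in the block $[j,j+s_n-1]$: the in-block indices each contribute $P\{X_i>u_n\}=v_n$, giving $r_ns_nv_n$ and hence the limit $1$ after division by $r_nv_ns_n$, whereas the out-of-block indices are negligible by \eqref{eq:summaxbound1}--\eqref{eq:summaxbound2}. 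For $c(g,g)$, after reducing $Var\bigl(\sum_{j=1}^{r_n}\mathds{1}_{\{M_{j,j+s_n-1}>u_n\}}\bigr)$ to a weighted lag sum $\approx2r_n\sum_{k\ge1}q_n(k)$ with $q_n(k):=P\{M_{1,s_n}>u_n,\,M_{1+k,s_n+k}>u_n\}$ and discarding the vanishing diagonal and mean-square terms, the crux is the inclusion--exclusion identity, valid for overlapping blocks $0\le k\le s_n$,
\begin{equation*}
P\{M_{1,s_n}>u_n,\,M_{1+k,s_n+k}>u_n\}=2P\{M_{1,s_n}>u_n\}-P\{M_{1,s_n+k}>u_n\},
\end{equation*}
which collapses everything to single-block exceedance probabilities. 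Inserting $P\{M_{1,m}>u_n\}=mv_n\theta(1+\ord(1))$ from \eqref{eq:maximum zu einzelueberschreitung} gives $q_n(k)=(s_n-k)v_n\theta(1+\ord(1))$, and $\sum_{k=1}^{s_n-1}(s_n-k)=\tfrac12 s_n^2(1+\ord(1))$, so that dividing by $r_nv_ns_n^2$ produces exactly $\theta$.

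I expect the main obstacle to be this last limit $c(g,g)=\theta$, for two reasons. First, the block pairs with lag $k\ge s_n$ do not overlap and must be shown to contribute negligibly; I would handle these as in \eqref{eq:summaxbound1} by bounding $q_n(k)$ through exceedances of single coordinates and summing, invoking Pratt's lemma under ($\theta$P) to exchange the limit with the sum. Second, applying \eqref{eq:maximum zu einzelueberschreitung} inside the sum over $k$ requires its \emph{uniform} version over block lengths $m\in[s_n,2s_n]$; this uniformity, however, follows from the stated ``for every sequence $k_n\to\infty$, $k_n\le r_n$'' form of \eqref{eq:maximum zu einzelueberschreitung} by a routine subsequence argument, using $s_n\le m\le 2s_n=\ord(r_n)$. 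Once (C) is verified, Theorem \ref{cor: g besch bedingung sliding blocks} yields the asserted Gaussian limit with covariance matrix $\bigl(\begin{smallmatrix}\theta&1\\1&c\end{smallmatrix}\bigr)$.
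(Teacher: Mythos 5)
Your proposal is correct, and it follows the paper's overall skeleton: both apply Theorem \ref{cor: g besch bedingung sliding blocks} to $X_{n,i}=X_i/u_n$, $\GG=\{g,h\}$, the normalizations $b_n(g)=\sqrt{nv_n/p_n^s}\,s_n$, $b_n(h)=\sqrt{nv_n/p_n^s}$, and then check (A1), (A2), (D0), (MX), \eqref{eq: bed g besch anzahl ungleich 0} and (C). Two of your verifications, however, take genuinely different routes. For \eqref{eq: bed g besch anzahl ungleich 0} with the functional $g$, the paper verifies condition (S) from ($\theta$P) and invokes Lemma \ref{lemma:pratts lemma bedingung allgemeines setting}, whereas you use the counting bound $\sum_{j=1}^{r_n}\ind{\{M_{j,j+s_n-1}>u_n\}}\le s_n\sum_{i=1}^{r_n+s_n-1}\ind{\{X_i>u_n\}}$ together with ($\theta$2); this is simpler, but note that ($\theta$2) controls the second moment of the sum only up to index $r_n$, so the extra $s_n-1$ terms require a one-line Minkowski/stationarity argument (or the mixing bound the paper itself uses for lags beyond $r_n$) --- a fillable detail, not a gap. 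More substantially, for $c(g,g)=\theta$ the paper (Lemma \ref{lemma:covariance.extremal}(iv)) decomposes the joint exceedance of two overlapping blocks according to the last exceedance in the first block and feeds this into the runs characterization \eqref{eq:runs char extremal}, sandwiching the resulting sum between two bounds; you instead use the inclusion--exclusion identity $P\{M_{1,s_n}>u_n,\,M_{1+k,s_n+k}>u_n\}=2P\{M_{1,s_n}>u_n\}-P\{M_{1,s_n+k}>u_n\}$, valid for $0\le k\le s_n$ precisely because the union of two overlapping blocks is again an interval, combined with the version of \eqref{eq:maximum zu einzelueberschreitung} that is uniform over block lengths $m\in[s_n,2s_n]$, which as you observe follows from the arbitrary-sequence form via a routine subsequence argument. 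Your identity collapses the overlapping-lag sum entirely to single-block exceedance probabilities and avoids \eqref{eq:runs char extremal}, which is arguably cleaner and more elementary; the paper's route has the side benefit of reusing the same last-exceedance decomposition it needs anyway for Lemma \ref{lemma:covariance.extremal}(i)--(ii) (the runs estimator). The remaining pieces --- negligibility of non-overlapping lags $k\ge s_n$, the cross-covariance $c(g,h)=1$, and $c(h,h)=c$ --- are handled in your proposal exactly as in the paper, via \eqref{eq:summaxbound1}--\eqref{eq:summaxbound2}, Pratt's lemma under ($\theta$P), and \eqref{eq:Varhconv}.
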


\begin{proof}
	We are going to apply Theorem \ref{cor: g besch bedingung sliding blocks}.
	Condition (A1) is obvious, and  (A2) with $l_n=2s_n-1$ and (MX) easily follow from ($\theta$1).
	Condition \eqref{eq: bed g besch anzahl ungleich 0} for the functional $h$ is immediate from ($\theta$2) (see proof of Proposition \ref{prop:extremal index disjoint zaehler und nenner}). To check it for $g$, we employ Lemma \ref{lemma:pratts lemma bedingung allgemeines setting}. First note that  $p_n^s b_n(g)^2/n=s_n^2v_n$. Moreover, by stationarity of the time series,
	\begin{align}
	\frac 1{s_n^2v_n} & \sum_{k=1}^{r_n} P\{M_{1,s_n}>u_n,M_{k,k+s_n-1}>u_n\}\\
	& \le \frac 1{s_n^2v_n} \sum_{k=1}^{r_n} \sum_{i=1}^{s_n} \sum_{j=k}^{k+s_n-1} P\{X_i>u_n,X_j>u_n\}\\
	& \le \frac 1{s_nv_n}  \sum_{i=1}^{s_n} \bigg(\sum_{j=1}^{s_n} P\{X_i>u_n,X_j>u_n\}+ \sum_{j=s_n+1}^{r_n+s_n-1} P\{X_i>u_n,X_j>u_n\}\bigg)\\
	& \le 1 + 2\sum_{k=1}^{s_n-1} P(X_k>u_n|X_0>u_n) + \sum_{k=1}^{r_n+s_n-2} P(X_k>u_n|X_0>u_n).
	\end{align}
	Therefore, condition (S) follows from ($\theta$P) and
	$$ \sum_{k=r_n+1}^{r_n+s_n-2} P(X_k>u_n|X_0>u_n)\le \frac{s_n}{v_n}\big(v_n^2+\beta_{n,r_n}^X \big) =\ord\Big(s_nv_n+\frac{n}{r_n}\beta_{n,r_n}^X\Big)\to 0. $$
	
	Then, condition \eqref{eq: bed g besch anzahl ungleich 0} for $g$ follows from Lemma \ref{lemma:pratts lemma bedingung allgemeines setting}. It remains to prove convergence (C) of the standardized covariance matrix. For the variance pertaining to $g$ and the covariance, this is done in Lemma \ref{lemma:covariance.extremal} (iii) and (iv). The convergence
	$$\frac{m_n}{p_n^s} Var(V_n(h)) = \frac{1+\ord(1)}{r_nv_n} Var\bigg(\sum_{j=1}^{r_n}\ind{\{X_j>u_n\}}\bigg) \to c $$
	has been shown in \eqref{eq:Varhconv}.
\end{proof}

Finally, we examine the statistics pertaining to the runs estimator, again using Theorem \ref{cor: g besch bedingung sliding blocks}. Here we consider $X_{n,i}$, and the functions $h$ defined above and
$$ f(x_1,\ldots,x_s)= \ind{\{x_1>1,\max_{2\le i\le s} x_i\le 1\}}. $$
The normalization is chosen as $b_n:=\sqrt{nv_n/p_n}$ for both functions $f$ and $h$.

\begin{proposition}
	\label{prop:extremal index runs zaehler und nenner}
	If the conditions ($\theta$1), ($\theta$2) and ($\theta$P) are satisfied, then
	\begin{align}
	\begin{pmatrix}
	\bar Z_n(f)\\ \bar Z_n(h)
	\end{pmatrix}&=\begin{pmatrix}
	\frac{1}{\sqrt{nv_n}}\sum_{i=1}^{n-s_n+1}\big(\mathds{1}_{\{X_i>u_n,M_{i+1,i+s_n-1}\leq u_n\}}-P\{X_1>u_n,M_{2,s_n}\leq u_n\}\big) \\
	\frac{1}{\sqrt{nv_n}}\sum_{i=1}^{n-s_n+1}\big(\mathds{1}_{\{X_i>u_n\}}-v_n\big)  \\
	\end{pmatrix} \\
	&\xrightarrow{w}
	\begin{pmatrix}
	\bar Z(f)\\ \bar Z(h)
	\end{pmatrix} \sim \mathcal{N}_2\left(0,\begin{pmatrix}	\theta  & 1 \\
	1  & c
	\end{pmatrix}\right).
	\end{align}
\end{proposition}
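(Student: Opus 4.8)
The plan is to apply Theorem~\ref{cor: g besch bedingung sliding blocks} to the sliding blocks process $\bar Z_n$ built from $X_{n,i}:=X_i/u_n$ and the two functionals $f$ and $h$, exactly along the lines of the proof of Proposition~\ref{prop:extremal index sliding zaehler und nenner}. Conditions (A1) and (A2) (with $l_n=2s_n-1$) together with (MX) are immediate from ($\theta$1), using $p_n=P\{M_{1,r_n}>u_n\}=r_nv_n\theta(1+\ord(1))$ from \eqref{eq:maximum zu einzelueberschreitung} and $b_n=\sqrt{nv_n/p_n}$. For the moment condition \eqref{eq: bed g besch anzahl ungleich 0} I would treat $h$ exactly as in Proposition~\ref{prop:extremal index disjoint zaehler und nenner}, where it is immediate from ($\theta$2). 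For $f$ I would exploit the pointwise domination $\ind{\{f(Y_{n,j})\neq 0\}}=\ind{\{X_j>u_n,\,M_{j+1,j+s_n-1}\le u_n\}}\le\ind{\{X_j>u_n\}}=\ind{\{h(Y_{n,j})\neq 0\}}$, so that $E[(\sum_{j=1}^{r_n}\ind{\{f(Y_{n,j})\neq0\}})^2]\le E[(\sum_{j=1}^{r_n}\ind{\{X_j>u_n\}})^2]$; since $b_n(f)=b_n(h)$, the bound for $f$ follows from the one for $h$ with no further work (alternatively via condition (S) and Lemma~\ref{lemma:pratts lemma bedingung allgemeines setting}).

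The heart of the matter is condition (C), the convergence of the standardized $2\times2$ covariance matrix to the matrix with diagonal entries $\theta$ and $c$ and off-diagonal entries $1$. The lower-right entry $\frac{m_n}{p_n}Var(V_n(h))\to c$ has already been established in \eqref{eq:Varhconv}. For the other two entries I would reduce to the preasymptotic quantities $\frac{1}{r_nv_n}Var(\sum_{j=1}^{r_n}f(Y_{n,j}))$ and $\frac{1}{r_nv_n}Cov(\sum_j f(Y_{n,j}),\sum_j h(Y_{n,j}))$, using $\frac{m_n}{p_nb_n^2}\sim(r_nv_n)^{-1}$ and noting that the product-of-means corrections are negligible since $r_nv_n\to0$.

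For the $f$-variance I would expand $E[(\sum_j f(Y_{n,j}))^2]$ as a double sum over $A_i:=\{X_i>u_n,\,M_{i+1,i+s_n-1}\le u_n\}$. Two run-ends at positions $i<i'$ with $i'-i\le s_n-1$ cannot coincide, because $M_{i+1,i+s_n-1}\le u_n$ forces $X_{i'}\le u_n$, contradicting $X_{i'}>u_n$; the surviving off-diagonal pairs satisfy $|i-i'|\ge s_n$ and their total is bounded by $2r_n\sum_{k\ge s_n}P\{X_0>u_n,X_k>u_n\}=\ord(r_nv_n)$ by ($\theta$P), Pratt's lemma and $s_n\to\infty$. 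The diagonal contributes $\sum_j Ef(Y_{n,j})=r_nv_n\theta(1+\ord(1))$ by the runs characterization \eqref{eq:runs char extremal}, whence $\frac{m_n}{p_n}Var(V_n(f))\to\theta$.

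The main obstacle will be the covariance entry, which I expect to equal $1$. With $B_j:=\{X_j>u_n\}\supseteq A_j$ the relevant quantity is $\frac{1}{r_nv_n}\sum_{i,j}P(A_i\cap B_j)$. Pairs with $j>i$ are negligible: either $i<j\le i+s_n-1$, impossible on $A_i\cap B_j$, or $j\ge i+s_n$, whose total is $\ord(r_nv_n)$ by ($\theta$P) as above. The surviving pairs have $j\le i$, i.e.\ each exceedance $B_j$ is matched with the run-end $A_i$ terminating the cluster that contains $j$ ($i$ being the last exceedance of that cluster). Under ($\theta$P) clusters terminate within a window of tight length, so every exceedance in $[1,r_n]$ is matched with exactly one in-range run-end up to a boundary error of order $\ord(r_nv_n)$, giving $\sum_{i,j}P(A_i\cap B_j)=E[\sum_{j=1}^{r_n}\ind{B_j}](1+\ord(1))=r_nv_n(1+\ord(1))$. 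Making this cluster-accounting rigorous --- controlling the cluster lengths and the right-boundary contribution via ($\theta$P) and Pratt's lemma --- is where I expect the real work to lie. Once (C) is verified, Theorem~\ref{cor: g besch bedingung sliding blocks} yields the claimed bivariate convergence; the limiting covariance matrix coincides with the one in Proposition~\ref{prop:extremal index sliding zaehler und nenner}, which is precisely why the runs and sliding-blocks estimators share their limit law.
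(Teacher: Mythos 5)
Your proposal is sound and follows essentially the same route as the paper: apply Theorem~\ref{cor: g besch bedingung sliding blocks} with $b_n(f)=b_n(h)=\sqrt{nv_n/p_n}$, recycle (A1), (A2), (MX) and condition \eqref{eq: bed g besch anzahl ungleich 0} for $h$ from Proposition~\ref{prop:extremal index sliding zaehler und nenner}, obtain \eqref{eq: bed g besch anzahl ungleich 0} for $f$ from the domination $f\neq 0\Rightarrow h\neq 0$, and verify (C) by computing the pre-asymptotic variances and covariances; your $f$-variance argument (diagonal $\to\theta$ by \eqref{eq:runs char extremal}, pairs at distance below $s_n$ impossible, distant pairs killed by ($\theta$P) and Pratt, centering $\Ord(r_nv_n)$) is exactly the paper's Lemma~\ref{lemma:covariance.extremal}(i).

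The one claim in your covariance accounting that is false as written is that every exceedance in $[1,r_n]$ is matched with \emph{exactly one} in-range run-end up to a boundary error: when the block contains several clusters, every run-end $i\ge j$ of a \emph{later} cluster also contributes $P(A_i\cap B_j)$ to the double sum, so the multiplicity generally exceeds one. The repair uses only tools you already cite. Two run-ends must be at mutual distance at least $s_n$ (on $A_{i_1}$ one has $X_{i'}\le u_n$ for $i_1<i'\le i_1+s_n-1$), so among the run-ends $i\ge j$ only the first one --- the terminator of $j$'s cluster --- can form a pair with $j$ at distance less than $s_n$; all remaining pairs satisfy $i-j\ge s_n$ and both coordinates exceed $u_n$, so their total is $\ord(r_nv_n)$ by the bound \eqref{eq:III_to_0}. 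Moreover, the event that $j$'s cluster has no terminating run-end in $\{j,\dots,r_n\}$ forces an exceedance in $\{r_n+1,\dots,r_n+s_n-1\}$, and
\begin{equation}
\sum_{j=1}^{r_n}P\{X_j>u_n,\,M_{r_n+1,r_n+s_n-1}>u_n\}=\Ord(s_nv_n)=\ord(r_nv_n)
\end{equation}
under ($\theta$P), so the boundary loss is negligible; together these give $\sum_{j\le i}P(A_i\cap B_j)=r_nv_n(1+\ord(1))$ as you assert. For comparison, the paper (Lemma~\ref{lemma:covariance.extremal}(ii)) organizes this step differently: the diagonal gives $\theta$ by \eqref{eq:runs char extremal}, and the off-diagonal sum with the run-end after the exceedance is split according to whether that run-end is the last exceedance of the whole window --- this part telescopes to $1-\theta$ via \eqref{eq:pnasymp} --- while the pairs with a non-final run-end are decoupled using the mixing coefficient $\beta^X_{n,s_n-1}$. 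Your patched version merges the diagonal directly into the limit $1$ and avoids the mixing decoupling; both arguments are valid and of comparable length.
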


\begin{proof}[Proof of Proposition \ref{prop:extremal index runs zaehler und nenner}]
	Conditions (A1), (A2), (MX) and \eqref{eq: bed g besch anzahl ungleich 0} for the functional $h$  have already been checked in the proof of Proposition \ref{prop:extremal index sliding zaehler und nenner}. Condition \eqref{eq: bed g besch anzahl ungleich 0} for $f$ follows readily, because $f(x)\ne 0$ implies $h(x) \ne 0$. While condition (C) for $Var(V_n(h))$ has been verified in the proof of Proposition \ref{prop:extremal index sliding zaehler und nenner}, it is established for $Var(V_n(f))$ and $Cov(V_n(f),V_n(h))$ in Lemma \ref{lemma:covariance.extremal} (i) and (ii). Thus the assertion follows from 	Theorem \ref{cor: g besch bedingung sliding blocks}.
\end{proof}

Now Theorem \ref{thm:extremal index} easily follows from the above propositions by a continuous mapping argument.
\begin{proof}[Proof of Theorem \ref{thm:extremal index}]
	Since the arguments are basically the same for all three estimators, we give the details only for the disjoint blocks estimator.
	In view of $E[V_n^c] = m_n^{-1/2}r_nv_n$, $E[V_n^d]=m_n^{-1/2}(r_n/s_n) P\{M_{1,s_n}>u_n\}$ and
	$p_n^{1/2}m_n^{-1/2} (r_n v_n)^{-1}= (\theta/(nv_n))^{1/2}(1+\ord(1))=\ord(1)$ (by \eqref{eq:pnasymp} and ($\theta$1)), direct calculations show that
	\begin{align}
	\sqrt{nv_n}(\hat{\theta}_n^d-\theta)
	& =\sqrt{nv_n}\bigg(\frac{\sum_{i=1}^{m_n}V_{n,i}^d}{\sum_{i=1}^{m_n}V_{n,i}^c}-\theta\bigg)\\
	& =\sqrt{nv_n}\cdot \frac{\sqrt{p_n}(Z_n^d-\theta Z_n^c)+m_n(E[V_n^d]-\theta E[V_n^c])}{m_nE[V_n^c]+\sqrt{p_n} Z_n^c}\\
	&=\sqrt{\frac{nv_np_n}{m_n(r_nv_n)^2}}\cdot \frac{Z_n^d-\theta Z_n^c + \sqrt{m_n/p_n} r_nv_n \big( P\{M_{1,s_n}>u_n\}/(s_nv_n)-\theta\big)}{1+\sqrt{p_n/m_n} (r_nv_n)^{-1} Z_n^c}\\
	& = \sqrt{\theta}(1+\ord(1))\frac{Z_n^d-\theta Z_n^c + \Ord(\sqrt{nv_n})\big( P\{M_{1,s_n}>u_n\}/(s_nv_n)-\theta\big)}{1+\ord_P(1)}\\
	& \to \sqrt{\theta}(Z^d-\theta Z^c),
	\end{align}
	where in the last step we have used Proposition \ref{prop:extremal index disjoint zaehler und nenner} and the bias condition (B$_b$). The limit random variable is centered and normally distributed with variance $\theta(1-2\theta(1/\theta)+\theta^2(c/\theta))=\theta(\theta c-1)$.
\end{proof}

\begin{lemma}
	\label{lemma:covariance.extremal}
	If the conditions ($\theta$1), ($\theta$2) and ($\theta$P) are met, then
	\begin{enumerate}
		\item[(i)]
		\qquad $\displaystyle \lim_{n\to\infty}\frac{1}{r_nv_n}Var\bigg(\sum_{i=1}^{r_n}\mathds{1}_{\{X_i>u_n,M_{i+1,i+s_n-1}\leq u_n\}}  \bigg) = \theta$
		
		\item[(ii)]
\qquad $\displaystyle
		\lim_{n\to\infty}\frac{1}{r_nv_n}Cov\bigg(\sum_{i=1}^{r_n}\mathds{1}_{\{X_i>u_n\}}, \sum_{j=1}^{r_n}\mathds{1}_{\{X_j>u_n,M_{j+1,j+s_n-1}\leq u_n\}} \bigg) = 1
$

		\item[(iii)]
		\qquad $\displaystyle \lim_{n\to\infty}\frac{1}{r_n s_n v_n}Cov\bigg(\sum_{i=1}^{r_n}\mathds{1}_{\{M_{i,i+s_n-1}> u_n\}},\sum_{j=1}^{r_n}\mathds{1}_{\{X_j>u_n\}}\bigg) = 1
$
		
		\item[(iv)]
  \qquad $\displaystyle
		\lim_{n\to\infty}\frac{1}{r_ns_n^2v_n}Var\bigg(\sum_{i=1}^{r_n}\mathds{1}_{\{M_{i,i+s_n-1}> u_n\}}\bigg)=\theta
$
	\end{enumerate}
\end{lemma}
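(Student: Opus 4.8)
The plan is to prove all four limits by a single scheme: expand each variance or covariance into a double sum of probabilities of intersections of exceedance/maxima events, exploit row-wise stationarity to reduce each double sum to a single sum over the lag $k=|i-j|$, split off the diagonal (or near-diagonal) contribution from the off-diagonal one, and evaluate the leading terms using the asymptotics \eqref{eq:pnasymp}, the block-exceedance relation \eqref{eq:maximum zu einzelueberschreitung}, and the runs characterization \eqref{eq:runs char extremal}. Throughout, the interchange of the limit with the lengthening sums over $k$, and the negligibility of the tail lags, will be justified by condition ($\theta$P) via Pratt's lemma together with the $\beta$-mixing bound in ($\theta$1); the product (mean) terms are negligible after the stated normalizations because $r_nv_n\to0$.

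For the covariance (iii) and the runs variance (i) I would first isolate the dominant mechanism. In (iii), writing $A_i=\{M_{i,i+s_n-1}>u_n\}$, for each $i$ the $s_n$ indices $j\in\{i,\dots,i+s_n-1\}$ satisfy $\{X_j>u_n\}\subseteq A_i$ and hence each contributes $v_n$; summing gives $s_nv_n$ per $i$ and $r_ns_nv_n$ overall, while the ``outside'' indices are controlled by \eqref{eq:summaxbound1}--\eqref{eq:summaxbound2} and are $\ord(s_nv_n)$. This is exactly the computation of $Cov(V_{n,1}^d,V_{n,1}^c)$ in the proof of Proposition \ref{prop:extremal index disjoint zaehler und nenner} and yields the limit $1$. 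In (i), the diagonal term equals $r_nP(B_1)(1-P(B_1))$ with $B_1=\{X_1>u_n,M_{2,s_n}\le u_n\}$, and $P(B_1)=v_n\theta(1+\ord(1))$ by \eqref{eq:runs char extremal}, giving the limit $\theta$; the off-diagonal terms vanish since $B_1\cap B_{1+k}=\emptyset$ for $1\le k\le s_n-1$ (as $B_1$ forces $X_{1+k}\le u_n$), while the far lags $k\ge s_n$ are bounded by $\sum_{k\ge s_n}P(X_k>u_n\mid X_0>u_n)\to0$.

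The two remaining parts require an extra idea. For the sliding-maxima variance (iv) I would use the inclusion--exclusion identity
\begin{equation*}
P\{M_{1,s_n}>u_n,\,M_{1+k,s_n+k}>u_n\}=2\,P\{M_{1,s_n}>u_n\}-P\{M_{1,s_n+k}>u_n\},
\end{equation*}
valid because the union of the two overlapping blocks is the single block $\{M_{1,s_n+k}>u_n\}$. Applying \eqref{eq:maximum zu einzelueberschreitung} to each term shows that for overlapping lags $1\le k<s_n$ the joint probability is $(s_n-k)v_n\theta(1+\ord(1))$, so the off-diagonal sum is $2\sum_{k=1}^{s_n-1}(r_n-k)(s_n-k)v_n\theta(1+\ord(1))\sim r_ns_n^2v_n\theta$ (the diagonal contributes only $r_ns_nv_n\theta=\ord(r_ns_n^2v_n)$ and the non-overlapping lags $k\ge s_n$ are negligible), which after dividing by $r_ns_n^2v_n$ gives $\theta$. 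For the covariance (ii) between the exceedance count $N$ and the runs count $R=\sum_j\mathds{1}_{B_j}$, the diagonal ($i=j$) contributes $P(B_1)\to v_n\theta$, the indices $i\in\{j+1,\dots,j+s_n-1\}$ give empty intersections, and the forward indices $i\ge j+s_n$ are negligible; the decisive contribution comes from the preceding exceedances $i<j$, for which I would show
\begin{equation*}
\sum_{k\ge1}P\{X_{-k}>u_n,\,X_0>u_n,\,M_{1,s_n-1}\le u_n\}=(1-\theta)v_n\,(1+\ord(1)),
\end{equation*}
so that the diagonal and preceding-exceedance terms sum to exactly $v_n$ per position and the limit is $1$.

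The main obstacle is precisely this last identification in (ii): unlike in (iii), the near-diagonal lags do not reduce to simple inclusions, and pinning the limit to $(1-\theta)v_n$ amounts to relating the expected number of exceedances preceding a run-end to the mean cluster size $1/\theta$ via \eqref{eq:runs char extremal}, once more justifying the interchange of limit and summation by Pratt's lemma under ($\theta$P). Establishing the inclusion--exclusion reduction in (iv) and, above all, controlling these near-diagonal lags uniformly in $n$ is where the bulk of the work lies; once the dominant sums are evaluated, the negligibility of the mean terms and of the far lags follows routinely from $r_nv_n\to0$ and the mixing assumption in ($\theta$1).
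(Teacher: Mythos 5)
Your parts (i) and (iii) follow the paper's own proof essentially verbatim: for (i), the diagonal term, the disjointness of the runs events at lags $1\le k<s_n$, and the tail bound $\sum_{k\ge s_n-1}P(X_k>u_n|X_0>u_n)\to 0$ obtained from ($\theta$P); for (iii), the inclusion $\{X_j>u_n\}\subseteq\{M_{i,i+s_n-1}>u_n\}$ for in-block indices combined with \eqref{eq:summaxbound1}--\eqref{eq:summaxbound2}. Your part (iv), by contrast, is a genuinely different and attractive route. The paper never uses the inclusion--exclusion identity
\begin{equation}
P\{M_{1,s_n}>u_n,\,M_{1+k,s_n+k}>u_n\}=2\,P\{M_{1,s_n}>u_n\}-P\{M_{1,s_n+k}>u_n\},\qquad 0\le k\le s_n,
\end{equation}
which is exact because the union of two overlapping blocks is a single block; instead it decomposes according to the last exceedance inside $\{1,\ldots,s_n\}$ and sandwiches $\sum_{j=1}^{s_n} j\,P\{X_1>u_n,M_{2,s_n-j+1}\le u_n\}$ between bounds derived from \eqref{eq:runs char extremal}. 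Your identity shortcuts that work, with two caveats you should make explicit: you need \eqref{eq:maximum zu einzelueberschreitung} uniformly over block lengths $m\in[s_n,2s_n]$ (this does follow, by a subsequence argument, from the fact that it holds for every $k_n\to\infty$ with $k_n\le r_n$), and the error must then be tracked as an absolute error $\ord(s_nv_n)$ uniform in $k$, not as the relative error ``$(s_n-k)v_n\theta(1+\ord(1))$'' you wrote, which fails for $k$ close to $s_n$; with that reading your conclusion $\sim r_ns_n^2v_n\theta$ is correct.

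Part (ii), however, contains a genuine gap, and you have located it yourself: the identity $\sum_{k\ge1}P\{X_{-k}>u_n,X_0>u_n,M_{1,s_n-1}\le u_n\}=(1-\theta)v_n(1+\ord(1))$ is not a technical verification to be ``justified by Pratt's lemma'' --- it is the entire content of (ii), and Pratt's lemma cannot deliver it under the stated hypotheses. Pratt's lemma requires the pointwise (in $k$) limits of the summands to exist; conditions ($\theta$1), ($\theta$2), ($\theta$P) supply only a dominating array $e_n(k)$ and give no guarantee that $P\{X_{-k}>u_n,X_0>u_n,M_{1,s_n-1}\le u_n\}/v_n$ converges for fixed $k$ (no regular variation or tail process is assumed in this lemma). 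Domination is enough in (i), (iii), (iv) and for all far-lag bounds, because there the relevant sums tend to $0$; in (ii) the preceding-exceedance sum has the \emph{nonzero} limit $1-\theta$, so it must actually be evaluated, and even if per-lag limits existed you would still need a separate argument identifying their sum as $1-\theta$. The paper circumvents per-lag limits entirely: in its term $II$ it splits each summand according to whether a further exceedance occurs in $\{j+s_n,\ldots,r_n+s_n-1\}$, the second case being killed by the mixing coefficient $\beta^X_{n,s_n-1}$; in the first case it reads $j$ as the last exceedance after $i$, so the inner sum telescopes to $P\{X_i>u_n,M_{i+1,r_n+s_n-1}>u_n\}$, and then reads $i$ as the last exceedance in $\{1,\ldots,r_n-1\}$, so the outer sum collapses to $(r_n-1)v_n-P\{M_{1,r_n-1}>u_n,M_{r_n,r_n+s_n-1}\le u_n\}$, which is $(1-\theta)r_nv_n(1+\ord(1))$ by \eqref{eq:pnasymp}. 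Some such global rearrangement of the double sum, rather than lag-by-lag limits, is what your proof of (ii) is missing.
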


\begin{proof}
	
	To prove assertion (i), check that by stationarity
	\begin{align}
	\frac{1}{r_nv_n} & Var\bigg(\sum_{i=1}^{r_n}\mathds{1}_{\{X_i>u_n,M_{i+1,i+s_n-1}\leq u_n\}}  \bigg)\\
	& = \frac{1}{r_nv_n} \sum_{i=1}^{r_n} \sum_{j=1}^{r_n} P\{X_i>u_n,M_{i+1,i+s_n-1}\leq u_n,X_j>u_n,M_{j+1,j+s_n-1}\leq u_n\}\\
	& \hspace{0.5cm} - r_nv_n \big(P(M_{2,s_n}\le u_n|X_1>u_n)\big)^2\\
	& = P(M_{2,s_n}\le u_n|X_1>u_n)+\Ord(r_nv_n)\\
	& \hspace{0.5cm} + \frac{2}{r_nv_n} \sum_{i=1}^{r_n-s_n} \sum_{j=i+s_n}^{r_n} P\{X_i>u_n,M_{i+1,i+s_n-1}\leq u_n,X_j>u_n,M_{j+1,j+s_n-1}\leq u_n\},
	\end{align}
	where in the last step we have used that the probability in the sum equals 0 if $1\le |i-j|<s_n$.
	The last term is bounded by
	\begin{equation}  \label{eq:III_to_0}
	\frac{2}{r_nv_n} \sum_{i=1}^{r_n-s_n} \sum_{j=i+s_n}^{r_n} P\{X_i>u_n,X_j>u_n\} \le 2 \sum_{k=s_n-1}^{r_n} P(X_k>u_n|X_0>u_n)
	\end{equation}
	and hence it tends  to 0 by Pratt's lemma and ($\theta$P). Now \eqref{eq:runs char extremal} and $r_nv_n\to 0$ yields the convergence of the normalized variance to $\theta$.
	
	Next we consider (ii). Similarly as above, stationarity implies
	\begin{align}
	\frac{1}{r_nv_n} & Cov\bigg(\sum_{i=1}^{r_n}\mathds{1}_{\{X_i>u_n\}},\sum_{j=1}^{r_n}\mathds{1}_{\{X_j>u_n,M_{j+1,j+s_n-1}\leq u_n\}}  \bigg)\\
	& = P(M_{2,s_n}\le u_n|X_1>u_n)\\
	& \hspace{1cm} + \frac{1}{r_nv_n} \sum_{i=1}^{r_n-1} \sum_{j=i+1}^{r_n} P\{X_i>u_n,X_j>u_n,M_{j+1,j+s_n-1}\leq u_n\}\\
	& \hspace{1cm} + \frac{1}{r_nv_n} \sum_{i=s_n+1}^{r_n} \sum_{j=1}^{i-s_n} P\{X_i>u_n,X_j>u_n,M_{j+1,j+s_n-1}\leq u_n\}+\Ord(r_nv_n)\\
	& =: I+II+III+\Ord(r_nv_n),
	\end{align}
	where $I\to\theta$ by \eqref{eq:runs char extremal}. Term $III$ can be bounded by
	$ (r_nv_n)^{-1}\sum_{j=1}^{r_n-s_n} \sum_{i=j+s_n}^{r_n} P\{X_i>u_n,X_j>u_n\}$,
	which tends to 0 by \eqref{eq:III_to_0}. Moreover,
	\begin{align}
	II & = \frac{1}{r_nv_n} \sum_{i=1}^{r_n-1} \sum_{j=i+1}^{r_n} \Big( P\{X_i>u_n,X_j>u_n,M_{j+1,r_n+s_n-1}\leq u_n\}\\
	& \hspace{3cm} + P\{X_i>u_n,X_j>u_n,M_{j+1,j+s_n-1}\leq u_n,M_{j+s_n,r_n+s_n-1}> u_n \}\Big).
	\end{align}
	If first $j$ is interpreted as the last instance of an exceedance in $\{i+1,\ldots,r_n+s_n-1\}$ and then $i$ as the last instance of an exceedance in $\{1,\ldots, r_n-1\}$, then one obtains
	\begin{align}
	\frac{1}{r_nv_n} & \sum_{i=1}^{r_n-1} \sum_{j=i+1}^{r_n} P\{X_i>u_n,X_j>u_n,M_{j+1,r_n+s_n-1}\leq u_n\}\\
	& = \frac{1}{r_nv_n} \sum_{i=1}^{r_n-1} P\{X_i>u_n,M_{i+1,r_n+s_n-1}>u_n\}\\
	& = \frac{(r_n-1)v_n}{r_n v_n} - \frac{1}{r_nv_n} \sum_{i=1}^{r_n-1} P\{X_i>u_n,M_{i+1,r_n+s_n-1}\le u_n\}\\
	& = 1+\ord(1)-\frac{1}{r_nv_n}P\{M_{1,r_n-1}>u_n, M_{r_n,r_n+s_n-1}\le u_n\}\\
	& \to 1-\theta,
	\end{align}
	because of \eqref{eq:pnasymp} and $P\{M_{r_n,r_n+s_n-1}>u_n\}\le s_n v_n =\ord(r_nv_n)$. Furthermore,
	\begin{align}
	\frac{1}{r_nv_n}& \sum_{i=1}^{r_n-1} \sum_{j=i+1}^{r_n} P\{X_i>u_n,X_j>u_n,M_{j+1,j+s_n-1}\leq u_n,M_{j+s_n,r_n+s_n-1}> u_n \}\\
	& \le \frac{1}{r_nv_n} \sum_{i=1}^{r_n-1} \sum_{j=i+1}^{r_n}\big( P\{X_i>u_n,X_j>u_n\}P\{M_{j+s_n,r_n+s_n-1}> u_n \}+\beta_{n,s_n-1}^X\big)\\
	& \le r_nv_n \sum_{k=1}^{r_n} P(X_k>u_n|X_0>u_n) + \frac{r_n}{v_n}\beta_{n,s_n-1}^X\\
	& \to 0,
	\end{align}
	by ($\theta$1) and ($\theta$P). To sum up, $II\to 1-\theta$, which concludes the proof of (ii).
	
	In view of \eqref{eq:summaxbound1} and \eqref{eq:summaxbound2}, the standardized covariance in (iii) equals
	\begin{align}
	\frac{1}{r_n s_n v_n} & Cov\bigg(\sum_{i=1}^{r_n}\mathds{1}_{\{M_{i,i+s_n-1}> u_n\}},\sum_{j=1}^{r_n}\mathds{1}_{\{X_j>u_n\}}\bigg)\\
	& = \frac{1}{r_n s_n v_n} \sum_{i=1}^{r_n} \sum_{j=1}^{r_n} P\{M_{i,i+s_n-1}> u_n, X_j>u_n\} - \frac{r_n}{s_n}P\{M_{1,s_n}>u_n\} \\
	& = \frac{1}{r_n s_n v_n} \bigg(\sum_{i=1}^{r_n} \sum_{j=i}^{\min(i+s_n-1, r_n)} P\{X_j>u_n\} +\ord(r_ns_nv_n)\bigg)+ \Ord(r_nv_n)\\
	& = \frac{1}{r_n s_n} \Big( (r_n-s_n+1)s_n+\frac{s_n(s_n-1)}2\Big)+\ord(1)\\
	& \to 1.
	\end{align}
	Finally, we turn to (iv). Stationarity implies
	\begin{align}
	Var & \bigg(\sum_{i=1}^{r_n}  \mathds{1}_{\{M_{i,i+s_n-1}> u_n\}}\bigg)\\
	& = \sum_{i=1}^{r_n} \sum_{j=1}^{r_n} P\{M_{i,i+s_n-1}> u_n,M_{j,j+s_n-1}> u_n\} -\big(r_nP\{M_{1,s_n}>u_n\}\big)^2\\
	& = 2\sum_{i=1}^{r_n} \sum_{j=i}^{r_n} P\{M_{i,i+s_n-1}> u_n,M_{j,j+s_n-1}> u_n\}  -r_nP\{M_{1,s_n}>u_n\} + \Ord((r_ns_nv_n)^2)\\
	& =  2\bigg[\sum_{i=1}^{r_n-3s_n} \sum_{j=i}^{i+s_n-1} P\{M_{i,i+s_n-1}> u_n,M_{j,j+s_n-1}> u_n\}\\
	& \hspace{1.5cm}+ \sum_{i=r_n-3s_n+1}^{r_n} \sum_{j=i}^{r_n} P\{M_{i,i+s_n-1}> u_n,M_{j,j+s_n-1}> u_n\}\\
	& \hspace{1.5cm}+\sum_{i=1}^{r_n-3s_n} \sum_{j=i+s_n}^{r_n-s_n} P\{M_{i,i+s_n-1}> u_n,M_{j,j+s_n-1}> u_n\}\\
	& \hspace{1.5cm}+\sum_{i=1}^{r_n-3s_n} \sum_{j=r_n-s_n+1}^{r_n} P\{M_{i,i+s_n-1}> u_n,M_{j,j+s_n-1}> u_n\}\bigg]+\ord(r_ns_n^2v_n)\\
	& =: 2[I+II+III+IV]+\ord(r_ns_n^2v_n).
	\end{align}
	Term $II$ is of the order $s_n^2 s_nv_n=\ord(r_ns_n^2v_n)$. Term $III$ can be bounded by
	\begin{align}
	\sum_{i=1}^{r_n-3s_n}& \sum_{j=i+s_n}^{r_n-s_n} \sum_{k=j}^{j+s_n-1} P\{M_{i,i+s_n-1}> u_n,X_k>u_n\}\\
	& \le s_n \sum_{i=1}^{r_n-3s_n}\sum_{k=i+s_n}^{r_n} P\{M_{i,i+s_n-1}> u_n,X_k>u_n\}\\
	& = \ord(r_ns_n^2v_n)
	\end{align}
	by \eqref{eq:summaxbound1}. Moreover, by ($\theta$1),
	\begin{align}
	IV & \le \sum_{i=1}^{r_n-3s_n} \sum_{j=r_n-s_n+1}^{r_n}\big( P\{M_{i,i+s_n-1}> u_n\}\cdot P\{M_{j,j+s_n-1}> u_n\}+\beta_{n,s_n-1}^X\big)\\
	& = \Ord\big(r_ns_n((s_nv_n)^2+\beta_{n,s_n-1}^X)\big) = \ord(r_ns_n^2v_n)
	\end{align}
	because $r_ns_n=r_n^2s_n/r_n=\ord(nv_n s_n/r_n)=\ord(n/r_n)$.
	
	It remains to be shown that
	$$ \frac{I}{r_ns_n^2v_n}=\frac{1+\ord(1)}{s_n^2v_n}\sum_{k=1}^{s_n} P\{M_{1,s_n}> u_n,M_{k,k+s_n-1}> u_n\} \to \frac{\theta}2. $$
	Distinguish according to the last exceedance in $\{1,\ldots, s_n\}$ to conclude
	\begin{align}
	\sum_{k=1}^{s_n} & P\{M_{1,s_n}> u_n,M_{k,k+s_n-1}> u_n\}\\
	& = \sum_{k=1}^{s_n} \sum_{j=1}^{s_n} P\{X_j>u_n,M_{j+1,s_n}\le u_n,M_{k,k+s_n-1}>u_n\}\\
	& = \sum_{k=1}^{s_n} \sum_{j=k}^{s_n} P\{X_j>u_n,M_{j+1,s_n}\le u_n\}
	+\Ord\bigg(\sum_{k=1}^{s_n} \sum_{j=1}^{k-1} P\{X_j>u_n,M_{k,k+s_n-1}>u_n\}\bigg)\\
	& = \sum_{j=1}^{s_n} j P\{X_j>u_n,M_{j+1,s_n}\le u_n\} + \ord(s_n^2v_n)\\
	& = \sum_{j=1}^{s_n} j P\{X_1>u_n,M_{2,s_n-j+1}\le u_n\} + \ord(s_n^2v_n),
	\end{align}
	where in the penultimate step we  have employed \eqref{eq:summaxbound2}. The last sum can be bounded from below by
	$$ \sum_{j=1}^{s_n} j P\{X_1>u_n,M_{2,s_n}\le u_n\}= \frac{s_n(s_n+1)}2 v_n P(M_{2,s_n}\le u_n|X_1>u_n) = \frac{s_n^2 v_n}2 \theta (1+\ord(1))
	$$
	because of \eqref{eq:runs char extremal}.
	Similarly, for any sequence $t_n=\ord(s_n)$ tending to $\infty$, \eqref{eq:runs char extremal} yields the asymptotic behavior of the following upper bound
	$$ \sum_{j=1}^{s_n-t_n} j P\{X_1>u_n,M_{2,t_n}\le u_n\} +t_ns_nv_n = \frac{s_n^2 v_n}2 \theta (1+\ord(1)).
	$$
	Hence, the sum divided by $s_n^2v_n$ must tend to $\theta/2$, which concludes the proof.
\end{proof}

\medskip
{\bf Acknowledgement:} We thank two anonymous referees for their valuable comments and suggestions which lead to a substantial improvement of the presentation.

\addcontentsline{toc}{section}{References}
\bibliography{Dissbib}
\bibliographystyle{agsm}
\end{document}